\newtheorem{theorem}{Theorem}[section]
\newtheorem{corollary}[theorem]{Corollary}
\newtheorem{lemma}[theorem]{Lemma}
\newtheorem{proposition}[theorem]{Proposition}
 \newtheorem{definition}[theorem]{Definition}
\newtheorem{remark}[theorem]{Remark}
\newtheorem{example}[theorem]{Example}
\newtheorem{claim}[theorem]{Claim}
\newenvironment{claimproof}[1]{\par\noindent\underline{Proof of the claim:}\space#1}{\hfill $\blacksquare$}
\newcommand{\Lbrace}{\left \lbrace}
\newcommand{\Rbrace}{\right \rbrace}
\newcommand{\Pd}{\mathbb{P}}
\newcommand{\Ed}{\mathbb{E}}
\newcommand{\vertexset}[1]{\mathcal{V}_{#1}}
\newcommand{\dtv}[2]{\mathrm{d}_{\mathrm{TV}}\left(#1,#2\right)}
\newcommand{\name}{TBRW }
\newcommand{\degree}[2]{\mathrm{deg}_{#1}(#2)}
\newcommand{\tmix}{t_{\mathrm{mix}}}
\newcommand{\com}[1]{\textcolor{red}{\texttt{giulio:} #1}}
\newcommand{\comj}[1]{\textcolor{purple}{\texttt{janos:} #1}}
\newcommand{\myexp}{2(1-\gamma)+\delta}
\renewcommand\thetable{\thesection.\@arabic\c@table}
\title[Recurrence, transience and Power-law degree distribution in \name]{
Recurrence, transience and degree distribution for the Tree Builder Random Walk
}
\author{János Engl\"ander$^1$,  Giulio Iacobelli$^2$ and  Rodrigo Ribeiro$^3$}
\thanks{1. J. E.'s research was partially supported by Simons Foundation Grant 579110. }
\thanks{3. R.R. was supported
    by The Stochastic Models of Disordered and Complex Systems (NC120062), provided by the Millenium Scientific Initiative of the Ministry of Science and Technology (Chile). }
\address{
\newline
\newline
$^1$ Department of Mathematics, University of Colorado.
\newline
 Boulder, CO-80309-0395, USA.
\newline
e-mail: {\rm \texttt{janos.englander@colorado.edu}}
\newline
\newline
$^2$ Mathematical Institute,  Federal University of Rio de Janeiro (UFRJ).
\newline  Caixa Postal 68530, 21945-970, Rio de Janeiro, Brazil.
\newline
e-mail: {\rm \texttt{giulio@im.ufrj.br}}
\newline
\newline
$^3$ Department of Mathematics, University of Colorado. 
\newline
 Boulder, CO-80309-0395, USA.
 \newline 
 e-mail: {\rm \texttt{Rodrigo.Ribeiro@colorado.edu}}
}
\subjclass[2010]{60K37}
\keywords{Random walk; Dynamic random environment; Random tree; Recurrence; Transience; Power-law degree distribution, Barab\'asi-Albert model; Preferential attachment}
\begin{document}

\maketitle

\begin{abstract}
We investigate a self-interacting random walk, whose dynamically evolving environment is a random tree built by the walker itself,  as it walks around.
At time $n=1,2,\dots$, right before stepping, the walker adds  a random number (possibly zero) $Z_n$ of leaves to its current position. We assume that the $Z_n$'s are independent, but, importantly, we do \emph{not} assume that they are identically distributed. 

We  obtain non-trivial conditions on their  distributions  under which the random walk is recurrent. 
This result is in contrast with some previous work in which, under the assumption that $Z_n\sim \mathsf{Ber}(p)$ (thus i.i.d.),  the random walk was shown to be  ballistic for every $p \in (0,1]$.  

We also obtain results on the transience of the walk, and the possibility that it ``gets stuck.''

From the perspective of the environment, we provide structural information about the sequence of random trees generated by the model when  $Z_n\sim \mathsf{Ber}(p_n)$, with $p_n=\Theta(n^{-\gamma})$ and $\gamma \in (2/3,1]$. 
 We prove that the empirical degree distribution of this random tree sequence converges almost surely to a power-law distribution of exponent $3$, thus revealing a connection to the well known preferential attachment model.
 %
 \end{abstract}

\section{Introduction}

The study of random walks on graphs that change over time has received increasingly more attention in the past decades, and has been the source of many new results in theoretical probability. Random walks on dynamic  graphs encompasses several   models: random walks in dynamic random environment \cite{avena2019random,hilario2015random,redig2013}, reinforced random walks \cite{pemantle2007survey,davis1990reinforced, angel2014localization,volkov2006phase, disertori2015transience} and excited random walk \cite{benjamini2003excited,zerner2005multi, berard2007, menshikov2012general,kosygina2012excited}. 

In all of these models, the underlying graph structure, more precisely the set of edges, is fixed and  the  graph dynamics reduces to  a change in time of the transition probabilities of the walker. In random walk in dynamic random environment, the change in time is  driven by a random process  independent of the walker dynamics, whereas  in   reinforced  and excited random walks (a.k.a., \emph{self-interacting} random walks), it  is coupled with the random walk trajectories. 


Recently, models of random walks which \emph{build their graph while walking} have been introduced, mutually coupling the random walk and the graph dynamics \cite{figueiredo2017building, iacobelli2019transient, iacobelli2019tree}. While they can be thought of as \emph{self-interacting} random walks, in a different way than in the predecessor models, they do not a-priori constrain the graph structure; the set of edges (as well as the vertex-set)  changes with time and strongly depends on the random walk trajectories (and vice versa).
In this paper, we study a model which belongs to this latter class; it is a particular case  of the Tree Builder  Random Walk (\name\!\!),  introduced in \cite{iacobelli2019tree}.

\subsection{Notation}
As usual, for two sequences of positive numbers,
$a_n=\mathcal{O}(b_n)$ will mean that $a_n/b_n$ remains bounded from above; $a_n=o(b_n)$ will mean that $a_n/b_n\to 0$ and $a_n=\Theta(b_n)$ will mean that $a_n/b_n$ remains bounded between two positive constants. By $X_n=\mathcal{O}(Y_n)$ for two sequences of random variables we mean that $\exists K>0$ for which $\limsup_n \frac{X_n}{Y_n}\le K$ a.s.; we use $o,\Theta$ for random variables in a similar manner.

For typographical reasons, we will often write $\mathbb{1}\{A\}$
instead of $\mathbb{1}_A$.
We recall that the \emph{order}  of a graph $G$ (denoted by $|G|$) is the cardinality of its vertex set, while the \emph{size} of a graph $G$  is the cardinality of its edge set. The set of vertices of a tree $T$ will be denoted by $\mathcal{V}(T)$ and then $|T|=|\mathcal{V}(T)|$= size of $T+1$. 

The Bernoulli distribution with parameter $p$ will be denoted by $\mathsf{Ber}(p)$, and $\rm d_{\rm TV}$ will denote the total variation distance between probability measures.

Finally, for us $\mathbb N$ will include zero, that is, $\mathbb N:=\{0,1,2,...\}$.

\subsection{The model}
 The model is parsimonious and depends on a sequence of probability laws~$\mathcal{L}:=L_1,L_2,...$, each $L_n$ supported on non-negative integers,  and a pair $(T_0, x_0)$, where  $T_0$ is a locally finite rooted tree with a self-loop\footnote{The role of the self-loop is to avoid periodicity.} attached at the root and $x_0$ is a vertex of $T_0$. The model is a stochastic process $\{(T_n, X_n)\}_{n\geq 0}$ on trees with a marked vertex (the current position of the walker), defined inductively. Given  $(T_n,X_n)$ we obtain~$(T_{n+1},X_{n+1})$ according to the rule below:  
\begin{enumerate}
	\item[(1)]\underline{Generate $T_{n+1}$}: create a non-negative random number of new vertices, independently of the history of the process up to time $n$, according to $L_n$ and connect them to~$X_n$; 
	\item[(2)] \underline{Obtain $X_{n+1}$}: given $T_{n+1}$,  choose uniformly (and independently from everything else) a neighbor of $X_n$ in $T_{n+1}$: this vertex will be $X_{n+1}$.
\end{enumerate}
Note that at every time $n$, first the tree $T_n$ may be modified (by the possible addition of new leaves) and then the random walk takes a step on the possibly modified tree $T_{n+1}$.

We refer to this model as $\mathcal{L}$-\name  to emphasize the  dependence on the sequence $\mathcal{L}:=\{L_n\}_{n\ge 1}$, which accounts for different probabilities of adding new vertices to the tree along the evolution. We denote by $\Pd_{x_0, T_0; \mathcal{L}}$ the law of $\{(T_n, X_n)\}_{n\geq 0}$ when $(T_0, X_0)=(T_0,x_0)$  and by $\Ed_{x_0, T_0;\mathcal{L}}$ the corresponding expectation.

It will be helpful to introduce also a  sequence of independent non-negative integer valued random variables $Z:=\{Z_n\}_{n\ge 1}$, such that $Z_n\sim L_n$. That is, $Z_n$ is the number of leaves added at time $n$.

We reserve the letters $P$ and $E$ for $P:=L_1\times L_2\times,\dots$ and for the  corresponding expectation.

Finally, $\mathcal{L}^{(m)}$ will denote the shifted sequence of laws $\mathcal{L}^{(m)}=\{L_{m+n}\}_{n\ge 1}$. When $\tau$ is an $\mathbb N$-valued stopping time with respect to the filtration generated by $Z_1,Z_2,...$, we will also use the notation $\mathcal{L}^{(\tau)}$ for the randomly shifted sequence $\mathcal{L}^{(\tau)}=\{L_{\tau+n}\}_{n\ge 1}$.

The behavior of the process $\mathcal{L}$-\name 
 may be studied from different perspectives. 
For example, one may look   at $\mathcal{L}$-\name{}  as a \textit{non-markovian}, \emph{self-interacting} random walk $\{X_n\}_{n\geq 0}$ whose environment is dynamically built  by the walker trajectories.  From this  first perspective,  understanding the dichotomy of \textit{transience/recurrence} and questions such as  \textit{ballisticity} and \textit{localization} are natural. 

Another interesting point of view consists of looking at  $\mathcal{L}$-\name as a random graph model. From this second  perspective questions concerning the \textit{structure} and \textit{degree distribution} of the random sequence of trees $\{T_n\}_{n\geq 0}$ stand out.

There is also a third perspective, which, in a way, is between the above two. The model $\mathcal{L}$-\name may be seen as a Markov chain $\{(T_n, X_n)\}_{n\geq 0}$, in the Polish space of locally finite rooted trees (see \cite{bordenave2012lecture} for an introduction to this space), i.e., each  pair $(T_n,X_n)$ may be interpreted as a tree $T_n$ rooted at $X_n$. From this 
perspective, the existence of stationary measures and the long-time behavior of the random rooted tree $(T_n,X_n)$, are typical  questions.  For this approach,  we refer the reader to \cite{figueiredo2017building} where the authors prove that, when $L_n = \mathsf{Ber}(p),\, \forall n\ge 1$ and $p\in (0,1]$ the sequence $\{(T_n,X_n)\}_{n\ge 0}$ converges, in a suitable sense, to a random infinite rooted tree. 

The model $\mathcal{L}$-\name belongs to a general  class of models of random walks that build their trees, called Tree Builder Random Walks (TBRW)~\cite{iacobelli2019tree}.  However, in \cite{iacobelli2019tree}, the authors assume a sort of \textit{uniform ellipticity} condition, namely, that $\inf_n P(Z_n\geq 1)=\kappa>0$,  i.e., that the probability of adding at least one new leaf is bounded away from zero. Under this  assumption, they prove that the corresponding random walk $\{X_n\}_{n\ge 0}$ is \textit{ballistic}.
 The $\mathcal{L}$-\name, with $L_n = \mathsf{Ber}(p),\, \forall n\ge 1$, $p \in (0,1]$, meets the uniform ellipticity condition, and in fact, the ballisticity of the walker had already been proven in \cite{figueiredo2017building}. 

Uniform ellipticity  is a key assumption because, in essence, it induces a \textit{regeneration structure}, similar in spirit to the one introduced in \cite{sznitman2001} for random walks in random environments on $\mathbb{Z}^d$, which allows the walker to forget fixed proportions of the space and regenerate the environment by starting  a completely ``new'' tree. 
 A natural question from  the random walk's perspective is:
 \begin{itemize}
     \item[{\bf (Q1)}] How does the random walk in $\mathcal{L}$-\name behave in the absence of the uniform ellipticity condition?   
 \end{itemize}
When looking  at  $\mathcal{L}$-\name as a random graph model, it is important to mention that random walks which build their graphs first appeared in the Network Science literature (see, \cite{saramaki2004scale, vazquez2003growing, cannings2013random, amorim2016growing} and references therein) as an attempt to  generate  \emph{scale-free} random graphs (graphs whose degree distribution is close to a power-law), while relaxing the assumption of \textit{global knowledge} present in the preferential attachment model of Barabási-Albert \cite{barabasi1999emergence} (global, here, refers to the fact the degree of every existing vertex must be known in order to decide the attachment probability). 
A natural question in this regard is:
\begin{itemize}
    \item[{\bf (Q2)}] Can $\mathcal{L}$-\name generate scale-free random graphs?
\end{itemize}
Note that, the ballisticity of the random walk in $\mathcal{L}$-\name, when $L_n= \mathsf{Ber}(p), p \in (0,1]$, is strongly intertwined with the  structure of the random trees $\{T_n\}_{n\geq 0}$. In particular, since the walker is moving away from its initial position fast, the trees generated  are \textit{path-like} (or vice versa!); at time $n$ the tree has a height of order $n$, and the degree distributions of $\{T_n\}_{n\geq 0}$ have  exponential tails.
This suggests that the scale-free nature of the sequence $\{T_n\}_{n\geq 0}$ for $\mathcal{L}$-\name may only emerge when the uniform ellipticity condition fails. 

\medskip 
Here we drop uniform ellipticity, by considering a sequence of laws $\{L_n\}_{n\ge 1}$ such that $\lim_{n\to\infty}L_n(\{0\}) \to 1$, and in such a case, we begin addressing  
the two questions above: we show the recurrence/transience of the random walk $\{X_n\}_{n\geq 0}$ under certain assumptions on $\mathcal{L}$, and also  the power-law degree distribution for the tree sequence $ \{T_n\}_{n\geq 0}$, when $L_n := \mathsf{Ber}(n^{-\gamma})$ with $\gamma \in (2/3,1]$. 


\begin{remark}[Strong Markov Property]
We will often use the Strong Markov Property for the Markov chain $\{(T_n, X_n)\}_{n\geq 0}$. Although it is quite obvious, it is worth noting that since time is discrete, this property automatically holds even though the state space is somewhat unusual (the space of marked/rooted trees).
\end{remark}

\subsection{Main results}
We want to define ``recurrence'' in such a way that when it holds, the walker visits any vertex infinitely often, with probability one.
In case of  ``transience'' we want the graph distance between the walker and any fixed vertex to tend to infinity almost surely. Importantly, when we say ``any vertex,'' we include even those that are eventually added. With these goals in mind, we make the following definition.
\begin{definition}[Transience and recurrence]
We say that the walker $X$ is 
     
{\sf(a)} \emph{ recurrent} if 
 $\forall m \in \mathbb{N}$, any finite tree $T$ and  $\forall x,v \in \mathcal{V}(T)$ (vertices of $T$),
\begin{equation*}
   \mathbb{P}_{T,x; \mathcal{L}^{(m)}}\left( X \text{ visits }v\text{ i.o.}\right) = 1\;,
\end{equation*}
that is, if starting at  the vertex $x$ of the tree $T$, the walker visits the vertex $v$ infinitely often a.s., when the tree growth is governed by  the  shifted sequence of laws $\mathcal{L}^{(m)}=\{L_{m+n}\}_{n\ge 1}$. 

{\sf(b)} \emph{transient} if $\forall m \in \mathbb{N}$, any finite tree $T$ and  $\forall x \in \mathcal{V}(T),
$\begin{equation*}
 \Pd_{T,x; \mathcal{L}^{(m)}}\left(\lim\limits_{n\to \infty} d(x,X_n)=\infty\right)=1\;, \end{equation*}where $d$ denotes the graph distance. 
\end{definition}
\begin{remark}\label{inter.pr} We mention two interesting problems regarding the above definitions.

(i) It is a non-trivial problem to check whether one could just as well define the same notion of recurrence (transience) by requiring that the relevant probability equals one for \emph{some} choice of the parameters, instead of \emph{all} choices.

(ii) Although, as we will see later,  in general, the walk can be neither recurrent nor transient, it would be nice to find conditions under which the dichotomy does hold. For example, we suspect that this is the case when each $Z_n$ is a Bernoulli variable.$\hfill\diamond$ 
\end{remark}
Define the expectations $m_n:=\sum_{i\ge 1}iL_n(i)$.
Our first results regard the behavior of the random walker in $\mathcal{L}$-\name. 
\begin{theorem}[Recurrent Regime] \label{thm:general_recurrence} Assume the following about $\mathcal{L}$:
\begin{enumerate}
\item[(A1)] $m_n<\infty, n\ge 1$;
\item[(A2)]  $q_n:=L_n(\{0\}) \nearrow 1$, as $n\to\infty$ (note monotonicity);
\item[(A3)] $(1-q_n)\cdot M_n^2\to 0$, as $n\to\infty$, where $M_n:=\sum_1^n m_k$;
\end{enumerate}
Then, the random walk in $\mathcal{L}$-\name is recurrent.
\end{theorem}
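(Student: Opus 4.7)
The plan is to show $\Pd_{T,x;\mathcal{L}^{(m)}}(v\text{ is visited i.o.})=1$ for every finite rooted initial state $(T,x)$, every $v\in T$, and every $m\in\mathbb{N}$; this pointwise statement is stronger than, and immediately implies, the infimum condition in the definition. The degenerate case $\sum_k m_k<\infty$ is handled by the first Borel-Cantelli lemma (only finitely many leaves are ever added a.s., after which the process is ordinary SRW on a finite tree, hence recurrent), so I may assume $M_n\to\infty$. The main strategy is then to exhibit an almost surely infinite sequence of pairwise disjoint time-windows in each of which the walker has conditional probability at least $\tfrac14$ of visiting $v$, and conclude via L\'evy's conditional Borel-Cantelli lemma. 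Let $\mathcal{F}_n$ denote the natural filtration of $\{(T_n,X_n)\}$.

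\emph{Step 1 (tree-size control).} Consider $X:=\liminf_n |T_n|/M_n$. Any finite change in the $Z_k$'s alters $|T_n|$ by a bounded amount, which is negligible against $M_n\to\infty$; hence $X$ is tail-measurable in $(Z_k)_{k\ge 1}$ and, by Kolmogorov's $0$-$1$ law, almost surely constant. Fatou's lemma gives $\Ed X\le \liminf_n \Ed[|T_n|/M_n]=1$, so $X\le 1$ a.s., and the event $G_n:=\{|T_n|\le 2M_n\}$ occurs for infinitely many $n$ almost surely.

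\emph{Step 2 (window estimate and iteration).} Set $W_n:=C|T_n|^2$ with $C$ a universal constant chosen so that for SRW on any finite tree of size $|T_n|$ the hitting time of any fixed vertex is $\le W_n$ with probability at least $1/2$ (by the commute-time identity and Markov's inequality). Given $\mathcal{F}_n$, the future $(Z_k)_{k>n}$ are independent of the walker's uniform step choices, and on the event ``no leaves added in $[n+1,n+W_n]$'' the walker runs ordinary SRW on the frozen tree $T_n$. By monotonicity of $q$ from (A2), this no-leaves event has conditional probability at least $q_{n+1}^{W_n}\ge 1-W_n(1-q_{n+1})$, which on $G_n$ is bounded below by $1-4C\,M_n^2(1-q_{n+1})$; by (A3) this tends to $1$, so it exceeds $1/2$ for all $n\ge N_0$ large enough. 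Combining,
\[
\Pd\bigl(v\in\{X_n,\ldots,X_{n+W_n}\}\,\big|\,\mathcal{F}_n\bigr)\ \ge\ \tfrac14\quad\text{on }G_n\cap\{n\ge N_0\}.
\]
Define $\nu_1:=\inf\{n\ge N_0:G_n\}$ and $\nu_{k+1}:=\inf\{n>\nu_k+W_{\nu_k}:G_n\}$. Step~1 applied conditionally on $\mathcal{F}_{\nu_k}$ (noting that the shifted process still satisfies the hypotheses) ensures every $\nu_k$ is a.s.\ finite, and the windows $[\nu_k,\nu_k+W_{\nu_k}]$ are pairwise disjoint. With $A_k:=\{v\text{ visited in window }k\}\in\mathcal{F}_{\nu_{k+1}}$, the above bound yields $\sum_k \Pd(A_k\mid\mathcal{F}_{\nu_k})=\infty$ a.s., so L\'evy's extension of the second Borel-Cantelli lemma gives $\Pd(A_k\text{ i.o.})=1$, delivering infinitely many distinct visits to $v$.

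\emph{Main obstacle.} The delicate part is Step~1: under (A1) alone no variance bound on $Z_n$ is available, so Chebyshev-type concentration is out of reach, and Kolmogorov's $0$-$1$ law applied to the tail variable $\liminf|T_n|/M_n$ plays the role of concentration. The length scale $W_n\asymp|T_n|^2$ is dictated by the quadratic SRW hitting-time bound on a tree; and assumption (A3), $(1-q_n)M_n^2\to 0$, is precisely what keeps the ``no-leaves'' probability $q_{n+1}^{W_n}$ bounded below on the good events $G_n$, so that all three ingredients (tree growth, leaf suppression, SRW mixing) can be balanced simultaneously.
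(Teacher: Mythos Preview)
Your proof is correct. The overall skeleton --- freeze the tree on a window of length $\asymp |T_n|^2$, use (A2)--(A3) to show no growth occurs on that window with good probability, and invoke the quadratic hitting/cover-time bound for SRW on a tree --- is the same as the paper's, but the two implementations differ in how the pieces are assembled.

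The paper avoids your stopping-time machinery entirely. It introduces an auxiliary sequence $g_n\to\infty$ with $(1-q_n)g_nM_n^2\to 0$ (possible by (A3)), takes the \emph{deterministic} window $[n,\,n+g_nM_n^2]$, and controls the tree size by the first-moment Markov inequality, $\Pd(|T_n|\ge g_n^{1/4}M_n)\le \Ed|T_n|/(g_n^{1/4}M_n)=o(1)$. Combining the no-growth, size, and cover-time estimates gives $\Pd(B_n)\to 1$ for $B_n=\{v\text{ visited in the window}\}$, and a single application of reverse Fatou, $\Pd(\limsup_n B_n)\ge\limsup_n\Pd(B_n)=1$, finishes the argument.

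Your route replaces Markov's inequality by the Kolmogorov $0$--$1$ law on $\liminf_n |T_n|/M_n$, which yields only that $G_n=\{|T_n|\le 2M_n\}$ occurs infinitely often rather than with high probability; this is what forces the stopping-time iteration $\nu_k$ and L\'evy's conditional Borel--Cantelli. (Your remark that ``Chebyshev-type concentration is out of reach'' is true but beside the point: the paper only ever uses the first moment, via Markov.) The trade-off is that the paper's version is shorter and more direct, while yours dispenses with the auxiliary slack $g_n$ at the cost of more machinery. One small omission: you should mention, as the paper does, that the extension to vertices added after time $0$ follows by the strong Markov property applied at the time the vertex appears.
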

The heuristic behind condition  (A3) is as follows. The expected time until at least a new vertex is added after time $n$ is at most $1/(1-q_n)$ (due to condition (A2)) and  the average size of the tree at time $n$ is $M_n$.  Thus condition (A3) is a mixing condition to assure that the random walk has a chance to mix on the graph before adding a new vertex (since the cover time of a random walk on a graph of size $n$ is   $O(n^2)$). 
\begin{corollary}\label{cor:recurrence}
Let  $L_n:=\mathsf{Ber}(p_n)$, with  $p_n:=1-q_n:=n^{-\gamma}$. Then the  random walk in $\mathcal{L}$-\name is recurrent for $\gamma>2/3$.
\end{corollary}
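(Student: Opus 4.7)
The plan is to derive the corollary as a direct consequence of Theorem~\ref{thm:general_recurrence} by verifying the three hypotheses (A1)--(A3) for the specific choice $L_n=\mathsf{Ber}(n^{-\gamma})$, and then identifying the range of $\gamma$ for which the critical quantitative hypothesis (A3) holds.

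For this choice, the first-moment sequence is $m_n = p_n = n^{-\gamma}$, so (A1) is trivially satisfied, and since $q_n = 1-n^{-\gamma}$ is strictly increasing to $1$ whenever $\gamma>0$, (A2) is immediate. The heart of the verification is therefore the asymptotic analysis of $M_n=\sum_{k=1}^n k^{-\gamma}$ and the quantity $(1-q_n)\cdot M_n^2 = n^{-\gamma}\cdot M_n^2$.

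The next step is a routine integral comparison to estimate $M_n$: for $\gamma\in(2/3,1)$ one has $M_n=\Theta(n^{1-\gamma})$, while for $\gamma=1$ one has $M_n=\Theta(\log n)$. Substituting into (A3) gives, in the first case,
\[
(1-q_n)\cdot M_n^2 = \Theta\!\left(n^{-\gamma}\cdot n^{2-2\gamma}\right) = \Theta\!\left(n^{2-3\gamma}\right),
\]
which tends to $0$ precisely when $\gamma>2/3$; in the case $\gamma=1$, the expression becomes $\Theta(n^{-1}\log^2 n)$, which also tends to $0$. Hence (A3) is satisfied for all $\gamma\in(2/3,1]$, and Theorem~\ref{thm:general_recurrence} applies to yield recurrence.

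There is no real obstacle here beyond making the asymptotic bookkeeping explicit; the corollary is essentially a ``plug-and-check'' consequence of the main recurrence theorem, and the threshold $\gamma = 2/3$ emerges naturally from balancing the decay rate of $1-q_n$ against the quadratic growth of $M_n^2$ in condition (A3).
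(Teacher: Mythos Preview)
Your proposal is correct and follows exactly the intended route: the paper states Corollary~\ref{cor:recurrence} without a separate proof, as an immediate consequence of Theorem~\ref{thm:general_recurrence}, and your verification of (A1)--(A3) via the integral comparison $M_n=\Theta(n^{1-\gamma})$ (resp.\ $\Theta(\log n)$ at $\gamma=1$) is precisely the natural check. The only minor omission is that the hypothesis $\gamma>2/3$ formally includes $\gamma>1$, where $M_n$ is bounded and (A3) is trivial; otherwise nothing is missing.
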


Theorem~\ref{thm:general_recurrence} cannot be applied to the case $L_n = \mathsf{Ber}(p_n)$ with $p_n=\Theta(n^{-\gamma})$, for  $\gamma \leq 2/3$, since  assumption A3 is not met. However, in the specific situation $L_n = \mathsf{Ber}(p_n)$ with $p_n=\Theta(n^{-\gamma})$ we manage to extend recurrence of the walker  for all $\gamma>1/2$, as stated in the following theorem. 
\begin{theorem}[Recurrent Regime for $L_n = \mathsf{Ber}\left(n^{-\gamma}\right)$]\label{thm:rec} Consider a $\mathcal{L}$-\name where $L_n = \mathsf{Ber}\left(n^{-\gamma}\right)$ and $\gamma > 1/2$. Then, the walk is recurrent.
\end{theorem}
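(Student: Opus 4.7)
The plan is to refine the argument behind Theorem~\ref{thm:general_recurrence} by exploiting the Bernoulli structure, which permits a much sharper control on leaf additions during short excursions of the walker. The basic observation is that, for $L_n = \mathsf{Ber}(n^{-\gamma})$, strong-law / concentration arguments give $|T_n| = \Theta(M_n) = \Theta(n^{1-\gamma})$ almost surely. On a static tree of size $N$, the expected return time of simple random walk (SRW) to a vertex of bounded degree is $\Theta(N)$ --- not $\Theta(N^2)$ as the crude commute-time bound between two arbitrary vertices would suggest. Over such a return window of length $\Theta(m^{1-\gamma})$ starting at time $m$, the expected number of newly added leaves is
\[
\sum_{k=m+1}^{m+Cm^{1-\gamma}} k^{-\gamma} = O\!\left(m^{1-2\gamma}\right),
\]
which vanishes precisely when $\gamma > 1/2$. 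This is the quantitative reason the conclusion should extend past the threshold $2/3$ of the general theorem.

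I would proceed as follows. First, define successive return times $\tau_0 := 0$ and $\tau_{k+1} := \inf\{n > \tau_k : X_n = x_0\}$. By the Strong Markov Property and the definition of recurrence in the paper, it suffices to show that $\tau_1 < \infty$ almost surely under $\Pd_{T,x;\mathcal{L}^{(m)}}$ for every starting configuration $(T,x)$ and every shift $m$, and that the corresponding hitting probability is bounded below uniformly in $(T,x,m)$. Starting from time $m$ in configuration $(T_m, X_m)$, I would couple the TBRW walk to an SRW $\widetilde{X}$ on the \emph{frozen} tree $T_m$, up to the first moment a new leaf is actually added. A stationary-distribution / commute-time estimate on $T_m$ (together with the a.s.\ bound on $|T_m|$) gives that $\widetilde{X}$ reaches $x_0$ within time $C\, m^{1-\gamma}$ with probability at least $1/2$. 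On the complementary event that no leaf is added in this window --- which has probability at least $1 - O(m^{1-2\gamma}) \to 1$ --- the coupling is exact and the TBRW walker reaches $x_0$.

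Iterating over dyadic time scales $m_j = 2^j$ and applying a Borel--Cantelli-type argument yields $\tau_k < \infty$ for every $k$ almost surely, i.e.\ the walker visits $x_0$ infinitely often. To upgrade this to the recurrence notion of the paper (visiting every vertex, including those added later, infinitely often), I would use the following observation: conditional on $X_{\tau_k} = x_0$ and on the current tree, the walker has a uniformly positive probability of traversing any fixed path from $x_0$ to a given $v \in T_{\tau_k}$ in a bounded number of steps (the worst case being a chain of bounded-degree vertices). A conditional Borel--Cantelli (Levy) argument then promotes infinitely many returns to $x_0$ into infinitely many visits to $v$.

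The main obstacle, and what prevents this from being entirely routine, is what happens when a new leaf \emph{is} added during an attempted return window. This is a low-probability event but it cannot be ignored, since the walker may be dragged off to the new leaf and restart with a larger distance to $x_0$. The delicate part of the proof will therefore be a bootstrapping step: show that even conditional on one (or a few) additions occurring, the modified tree still admits a return to $x_0$ within a slightly enlarged window with probability bounded below, so that summing over dyadic scales still yields almost sure return. Handling this requires iterating the frozen-tree coupling after each addition while tracking that the net probability of eventual return stays close to $1$ for $\gamma > 1/2$.
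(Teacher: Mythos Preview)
Your proposal has a genuine gap at the central estimate. You write that on the frozen tree $T_m$ (of size $N \approx m^{1-\gamma}$), ``$\widetilde{X}$ reaches $x_0$ within time $C\,m^{1-\gamma}$ with probability at least $1/2$,'' and you justify this by the fact that the expected \emph{return} time to a vertex of bounded degree is $\Theta(N)$. But you are starting from an arbitrary $X_m$, so what you need is a \emph{hitting} time, not a return time. On a general tree of size $N$ the hitting time of a fixed vertex from an arbitrary starting point can be $\Theta(N^2)$ (the path is the obvious witness), and nothing in the argument rules out path-like substructures in $T_m$ for $\gamma\in(1/2,2/3]$. If you enlarge the window to the correct hitting/cover scale $n^{2(1-\gamma)}$, the expected number of additions in that window becomes $\Theta(n^{2-3\gamma})$, which is \emph{not} $o(1)$ for $\gamma\le 2/3$. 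So the ``no addition in the window'' event no longer has probability tending to $1$, and your coupling argument collapses precisely on the range $(1/2,2/3]$ that the theorem is meant to cover. Your final ``bootstrapping'' paragraph acknowledges trouble when an addition occurs, but as stated it does not address the fact that \emph{many} additions are expected in every cover-time window once $\gamma\le 2/3$.

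The paper's proof accepts that additions occur in the window $[n,n+n^{2(1-\gamma)+\delta}]$ and instead shows they are harmless: the number of steps spent on vertices added during the window is $o(n^{2(1-\gamma)+\delta})$ (Lemma~\ref{lemma:nst}), so the walk restricted to the old tree $T_n$ still performs $\Theta(n^{2(1-\gamma)+\delta})$ transitions, enough to cover $T_n$. The real work is Proposition~\ref{lem:step_ii}, which bounds the probability that a newly added vertex reaches degree $d$ by $C k^{-\gamma} n^{-\varepsilon(d-1)}$; this prevents the new subtrees from trapping the walker. That degree-control step is the missing idea in your outline, and without it there is no way to push the argument below $\gamma=2/3$.
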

In Section \ref{tr.many} we will give conditions under which the walk is transient (see, Theorem \ref{thm:tr} and the corollary afterwards.) Specifically, we will see that when there are infinitely many growth times (i.e. $p_n$ is not summable),
and there are sufficiently many edges  grown at those times, the walk is never recurrent, and  under a mild condition on $p_n$ (Condition 3 of Theorem \ref{thm:tr}) it is transient.

In our next result, we focus on the tree structure rather than the walker, and  show that for $L_n = \mathsf{Ber}(p_n)$ with $p_n=\Theta(n^{-\gamma})$, in the regime 
$\gamma \in (2/3,1]$,
$\mathcal{L}$-\name generates  trees whose degree distributions  converge to the very same limiting distribution as in the celebrated Barabási-Albert model \cite{barabasi1999emergence,bollobas2001degree}. Note that by Corollary \ref{cor:recurrence}, in this regime recurrence holds.

Let $\vertexset{n}$ denote the vertex set of $T_n$; for $v \in \vertexset{n}$, $\degree{T_n}{v}$ 
denotes the degree of 
$v$ in $T_n$.
\begin{theorem}[Power-law degree distribution]\label{thm:powerlaw} Let $\{T_n\}_{n\geq 0}$ denote the sequence of random trees in $\mathcal{L}$-\name for $L_n: =\mathsf{Ber}(p_n)$ with $p_n=\Theta(n^{-\gamma})$. Then, for $\gamma \in (2/3, 1]$, any initial condition $(T_0,x_0)$ and $\forall d \in \mathbb{N}\setminus\{0\}$, it holds that
	\begin{equation*}
	\lim_{n \rightarrow \infty} \frac{1}{|\vertexset{n}|}\sum_{v\in \vertexset{n}}\mathbb{1}\{\degree{T_n}{v} = d\} = \frac{4}{d(d+1)(d+2)}\;, \quad \Pd_{T_0, x_0; \mathcal{L}}\text{-a.s.}
	\end{equation*}
\end{theorem}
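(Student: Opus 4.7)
The strategy is to follow the classical template for proving power-law degree limits in preferential attachment-type models: identify a limiting profile $f$ as a fixed point of the one-step mean recursion for $N_d(n):=\sum_{v\in\vertexset{n}}\mathbb{1}\{\degree{T_n}{v}=d\}$, and then upgrade to almost sure convergence via a Doob/Azuma argument. The key conceptual point is that although the attachment mechanism here is not explicitly proportional to degree, it \emph{effectively} becomes so: in the regime $\gamma>2/3$ the walker has enough time to mix between successive growth events, so its conditional law given $T_n$ is close to the SRW stationary measure $\pi_n(v)\propto \degree{T_n}{v}$. Consequently, the vertex that receives the next leaf has probability $\approx dN_d(n)/(2|\vertexset{n}|)$ of having degree $d$, which is precisely the Barab\'asi--Albert attachment rule.

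Concretely, I would first establish the quasi-stationarity estimate. Let $\tau_k$ denote the $k$-th growth time, i.e.\ the $k$-th index at which $Z_n=1$; between $\tau_k$ and $\tau_{k+1}$ the tree is frozen equal to $T_{\tau_k}$ and the walker performs SRW on it. A standard spectral bound gives $\tmix(T)=\mathcal{O}(|V(T)|^2\log|V(T)|)$ for any tree $T$, while $|\vertexset{n}|=\Theta(n^{1-\gamma})$ (or $\Theta(\log n)$ if $\gamma=1$) by a law-of-large-numbers argument applied to $\sum_{k\leq n}Z_k$, and the typical gap $\tau_{k+1}-\tau_k$ at times of order $n$ is $\Theta(n^{\gamma})$. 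The inequality $2(1-\gamma)<\gamma$, equivalent to $\gamma>2/3$, is precisely the condition under which $\tmix(T_n)=o(\tau_{k+1}-\tau_k)$, so that $\dtv{\Pd(X_n\in\cdot\mid T_n)}{\pi_n}\to 0$ with a polynomial rate.

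Using this, the conditional one-step increment of $N_d$ can be expanded as
\begin{equation*}
\mathbb{E}[N_d(n+1)-N_d(n)\mid\mathcal{F}_n] = p_{n+1}\Bigl[\mathbb{1}\{d=1\}+\tfrac{(d-1)N_{d-1}(n)}{2|\vertexset{n}|}-\tfrac{d N_d(n)}{2|\vertexset{n}|}\Bigr]+\varepsilon_{n,d},
\end{equation*}
where $\varepsilon_{n,d}$ is an error term coming from the deviation of $\Pd(X_n\in\cdot\mid T_n)$ from $\pi_n$ and from the self-loop correction at the root. Since $\mathbb{E}[|\vertexset{n+1}|-|\vertexset{n}|\mid\mathcal{F}_n]=p_{n+1}$, the natural candidate limit $f(d)$ for $\bar N_d(n):=N_d(n)/|\vertexset{n}|$ is the fixed point of
\begin{equation*}
\tfrac{d+2}{2}f(d)=\mathbb{1}\{d=1\}+\tfrac{d-1}{2}f(d-1),
\end{equation*}
which, solved inductively from $f(1)=2/3$, yields $f(d)=4/(d(d+1)(d+2))$. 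To turn this into almost sure convergence I would proceed by induction on $d$: for $d=1$ use the Doob decomposition of $|\vertexset{n}|\,\bar N_1(n)$, bound the martingale increments by $\mathcal{O}(Z_{n+1})$ and apply an Azuma-type concentration with effective step size $p_{n+1}/|\vertexset{n}|$; for $d\geq 2$ plug the already established convergence of $\bar N_{d-1}$ into the recursion and repeat the same scheme.

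The main obstacle is the quantitative mixing-time estimate for the random tree: the crude bound $\tmix=\mathcal{O}(|V|^2\log|V|)$ is what dictates the threshold $\gamma>2/3$, and making it rigorous requires an inequality that holds uniformly in the realization of $T_n$, since a rare but badly unbalanced tree could spoil the error term $\varepsilon_{n,d}$. A secondary technical point is controlling $|\vertexset{n}|$ uniformly around its mean, including the edge case $\gamma=1$ where it grows only logarithmically, so that the effective step size $p_{n+1}/|\vertexset{n}|$ in the stochastic approximation behaves as expected. Once these two ingredients are in place, the derivation of the limit profile and the inductive Doob/Azuma upgrade are fairly standard.
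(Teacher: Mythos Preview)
Your high-level strategy---show that the attachment rule is asymptotically preferential via a mixing estimate, then run a martingale/stochastic-approximation argument inductively in $d$---is exactly the paper's, and your reading of the threshold $\gamma>2/3$ as $\tmix(T_n)=\mathcal{O}(|\vertexset{n}|^2)=\mathcal{O}(n^{2(1-\gamma)})$ versus typical gap $\Theta(n^\gamma)$ is correct. The fixed-point computation for $f(d)$ is also right.

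There is, however, a genuine gap. Your displayed recursion for $\Ed[N_d(n+1)-N_d(n)\mid\mathcal{F}_n]$ cannot hold as written: if $\mathcal{F}_n$ is the natural filtration (so $X_n$ is $\mathcal{F}_n$-measurable), the conditional increment is $p_{n+1}\bigl[\mathbb{1}\{d=1\}+\mathbb{1}\{\deg_{T_n}(X_n)=d-1\}-\mathbb{1}\{\deg_{T_n}(X_n)=d\}\bigr]$ exactly, with no degree ratio and no error. The paper avoids this by passing to the subsequence of growth times $\tau_k$ and conditioning on $\mathcal{F}_{\tau_{k-1}}$; then $X_{\tau_k-1}$ is genuinely random, and \emph{provided} $\Delta\tau_k$ is large its law is within $e^{-k^\delta}$ of $\Pi_{k-1}$ in total variation. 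The more substantive omission is that you never say what to do when $\Delta\tau_k$ is \emph{atypically short}: the TV bound then degenerates to $O(1)$, and your $\varepsilon_{n,d}$ absorbs an uncontrolled term at every such growth event. The paper's device for this is a red/blue half-edge coloring: half-edges born at ``bad'' $\tau_k$ (those with $\Delta\tau_k<k^{2+\delta}+1$) are colored red, as is a biased-coin fraction $R_{k-1}/(2(k-1))$ of the half-edges born at good $\tau_k$, so that the \emph{blue} degree evolves as a pure linear PA scheme. A separate lemma shows $R_k\le k^{1-\varepsilon}$ with stretched-exponential probability, whence the blue and full empirical degree distributions share the same limit, and the stochastic-approximation lemma (in the Robbins--Monro form, not Azuma) closes the induction on $d$. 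Some mechanism of this kind---either the coloring, or a direct proof that the number of bad growth times among the first $k$ is $o(k)$---is the missing ingredient in your sketch. By contrast, the obstacle you flag as ``main'' (uniformity of the mixing bound across tree shapes) is a non-issue: every tree on $m$ vertices satisfies $\tmix\le 2m^2$, with no dependence on geometry.
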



\subsection{Open questions} In addition to Remark \ref{inter.pr},
we list below a couple of questions about the $\mathcal{L}$-\name which are yet to be answered.

\begin{itemize}
    \item[1)] For $L_n= \mathsf{Ber}(p_n)$ with $p_n=\Theta(n^{-\gamma})$ we  expect that for  $\gamma>0$ sufficiently small the random walk is transient. We conjecture that there is a phase transition for recurrence/transience according to whether  $\gamma>1/2$ or $\gamma<1/2$; is this really the case? What happens at $\gamma=1/2$? 
   %

    \item[2)] Concerning the tree structure, for $L_n= \mathsf{Ber}(p_n)$ with $p_n=\Theta(n^{-\gamma})$ and $\gamma>0$ sufficiently small we expect  a tree sequence with exponential tail degree distribution. Is $\gamma=1/2$ also a phase transition point for power-law/exponential tail degree distribution?
\end{itemize}



%
\section{Recurrence of the walker in \name  (proof of Theorem~\ref{thm:general_recurrence})}
In this section we  prove Theorem~\ref{thm:general_recurrence}, which states that the under certain assumptions on $\{Z_n\}_{n\geq 1}$ the random walk is recurrent. Without the loss of generality, throughout this section we may and will assume that our initial tree $T_0$ consists of just a single vertex, denoted by  $o$.  As will become clear along the proof, the initial tree plays no important role in the long-time behavior of the walker.

\medskip 

\begin{proof}[Proof of Theorem~\ref{thm:general_recurrence}] First note that if $\mathcal{L}$ satisfies (A1)-(A3) then, for any $m\ge 1$, the shifted sequence of laws $\mathcal{L}^{(m)}$  also satisfies those assumptions. This, in conjunction with the Markov property allows us to prove the theorem by only considering $m=0$.

Next, notice that by (A3), we can choose a sequence $g_n$ such that
    \begin{equation*}
        g_n \to \infty \text{ and } (1-q_n)\cdot g_n \cdot M_n^2 \to 0, \text{ as }n \to \infty\;.
    \end{equation*}
    Consider now the random variable
\begin{equation*}
    W_n = \sum_{k=n}^{n+g_nM_n^2} \mathbb{1}\{ Z_k \ge 1\}\;.
\end{equation*}
In words, $W_n$ counts how many times the walker has added at least one leaf during the time interval $[n, n+g_nM_n^2]$. By (A2), it follows that 
\begin{equation}\label{eq:mean}
    \Ed_{T_0, x ; \mathcal{L}}\left[W_n\right] = \sum_{k=n}^{n+g_nM_n^2} 1-q_k \le (1-q_{n})\cdot g_n \cdot M_n^2 \to 0\;,
\end{equation}
as $n$ tends to infinity. Now, let $A_n$ denote the  event
\begin{equation*}
    A_n = \{ \text{no grow in the interval }[n, n+g_nM_n^2]\}\;,
\end{equation*}
that is, the walker did not add any new leaves to the tree in the time interval $[n, n+g_nM_n^2]$.
By the bound in \eqref{eq:mean} along with the Markov inequality, we have that
\begin{equation}\label{eq:an}
    \Pd_{T_0, x; \mathcal{L}}(A_n^c) = \Pd_{T_0, x; \mathcal{L}}(W_n \ge 1) \le \Ed_{T_0, x; \mathcal{L}}\left[W_n\right] = o(1)\;.
\end{equation}
Finally, let $B_n$ denote the  event
\begin{equation*}
    B_n := \{ X \text{ visits the root some time in }[n, n+g_nM_n^2]\}\;.
\end{equation*}
Using the fact (see \cite{brightwellwinkler1990}) that the cover time of a tree of size $k$ is at most $2k^2$ gives us 
\begin{equation}\label{eq:bnan}
    \Pd_{T_0, x; \mathcal{L}}(B_n^c, A_n | V_n \le g_n^{1/4}M_n) \le \frac{\sqrt{g_n}M_n^2}{g_n M_n^2} = o(1)\;.
\end{equation}
By the simple Markov property we have that on the event $A_n$, $\{X_k\}_n^{n + g_nM_n^2}$ is distributed as a SSRW on $T_n$, which is a tree with at most $g_n^{1/4}M_n$ vertices. Moreover, being on $B_n^c$ means that this SRRW did not cover the entire graph in $g_n M_n^2$ steps. The bound then follows by the Markov inequality. Finally, 
\begin{equation*}
    \begin{split}
        \Pd_{T_0, x; \mathcal{L}}(B_n^c) & \le  \Pd_{T_0, x; \mathcal{L}}(B_n^c, A_n, V_n \le g_n^{1/4}M_n) + \Pd_{T_0, x; \mathcal{L}}(A_n^c) + \Pd_{T_0, x; \mathcal{L}}(V_n \ge g_n^{1/4}M_n) = o(1)\;,
    \end{split}
\end{equation*}
since the first term at the RHS is $o(1)$ by \eqref{eq:bnan}, the second one is $o(1)$ by \eqref{eq:an}, whereas the third one is $o(1)$ by the Markov inequality. The above inequality yields
$$
    \Pd_{T_0, x; \mathcal{L}}(X \text{ visits the root some time in }[n, n+g_nM_n^2]) = P(B_n) = 1-o(1)\;.
$$
Finally, by Fatou's lemma
$$
\Pd_{T_0, x; \mathcal{L}}(\limsup_{n\to \infty} B_n ) \ge \limsup_{n\to \infty}\Pd_{T_0, x; \mathcal{L}}(B_n) = 1\;,
$$
which proves that the root is visited infinitely many times for any initial condition $(T_0,x)$. To extend the result to a vertex which is eventually added to the tree, we apply the strong Markov property to the Markov process $\{(T_n,X_n)\}_{n\in \mathbb{N}}$ together with the result for a fixed initial condition. This proves that even those vertices which are added by the process in the future are visited infinitely often.
\end{proof}

\section{Recurrence of the walker in \name with $L_n = \mathsf{Ber}(p_n)$ with $p_n=\Theta(n^{-\gamma})$ (proof of Theorem~\ref{thm:rec})}

\newcommand{\comg}[1]{\textcolor{red}{\texttt{giulio:} #1}}

Let us begin recalling that the recurrence for $\gamma>2/3$ follows by Corollary~\ref{cor:recurrence}, thus we need to show it for $\gamma \in (1/2,2/3]$. 
Before we go to the proof of Theorem~\ref{thm:rec},  let us say some words about the general idea. The main idea behind the proof of Theorem~\ref{thm:general_recurrence} is that the walker $X$ most of the time mixes on $T_n$ before adding  new leaves. This allows us to rely on general bounds for cover time. However, mixing before adding new leaves is a strong condition which is not satisfied  for the sequence of laws  $L_n = \mathrm{Ber}(n^{-\gamma})$ with $\gamma \leq 2/3$. So, the strategy behind the proof of Theorem \ref{thm:rec} is to try to mix in a smaller tree. For this we must find a sequence of time intervals $[t_n, t_n + s_n]$ with the following characteristics:
\begin{enumerate}
\item $s_n$ is small enough so that the trees $T_{t_n}$ and $T_{t_n+s_n}$ are comparable in size;
\item $X$ spends a large enough amount of time on $T_{t_n}$ in the time interval $[t_n, t_n+s_n]$;
\item $s_n$ is large enough so that the time spent on $T_{t_n}$ is enough  for $X$ to to cover  $T_{t_n}$, although it may not  mix over $T_{t_n+s_n}$.
\end{enumerate}

 Given $ t, s \in \mathbb{N}$,  we will say that a vertex $v$ added between times $t$ and $t+s$ is {\it red} and define  $N_{t,t+s}$ as the number of visits to red vertices between times $t$ and $t+s$, i.e.,   
\begin{equation*}
    N_{t,t+s} := \sum_{j=t}^{t+s} \mathbb{1}\{X_j \not\in  V(T_t)\}\;.
\end{equation*}
The main ingredient in the proof of Theorem \ref{thm:rec} is the lemma below which provides the right order of the time window to observe $X$. 
Specifically, it states that, for some small~$\delta$, the sequence of time intervals $[n, n+  n^{2(1-\gamma)+\delta}]$ satisfies the properties $1)-3)$ above with  $t_n=n$ and $s_n=n^{2(1-\gamma)+\delta}$.
\begin{lemma}\label{lemma:nst} Consider a \name where $L_n = \mathrm{Ber}\left(n^{-\gamma}\right)$ and $\gamma \in (1/2,1]$. Then, for any initial condition $(T,x)$, any $m \in \mathbb{N}$ (time shift)  and $0<\delta < 2\gamma -1$, it holds that  %
\[
\mathbb{E}_{T,x; \mathcal{L}^{(m)}}\left[N_{n, n + n^{\myexp}}\right] = o\left(n^{\myexp}\right)\;.
\]
\end{lemma}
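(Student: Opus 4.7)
The plan is to set $s_n := n^{\myexp}$ and to show $\mathbb{E}_{T,x;\mathcal{L}^{(m)}}[N_{n, n+s_n}] = o(s_n)$ by combining a count of newly added (``red'') vertices with a bound on how many times each such vertex can be visited. First, let $R := |V(T_{n+s_n})|-|V(T_n)|$, the total number of red additions in the window. Since $p_{m+k}=\Theta((m+k)^{-\gamma})$ and the assumption $\delta<2\gamma-1$ forces $s_n=o(n)$, a standard integral estimate yields
\[
\mathbb{E}[R] \;=\; O\!\left(s_n\, n^{-\gamma}\right) \;=\; O\!\left(n^{2-3\gamma+\delta}\right) \;=\; o\!\left(n^{1-\gamma}\right) \;=\; o(M_n).
\]
Consequently, the red vertices are asymptotically negligible with respect to the size of $T_n$. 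Moreover $s_n/M_n^2 = n^{\delta}\to\infty$, so the walker has ample time to mix on $T_n$ between red events.

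Next, decompose
\[
N_{n, n+s_n} \;=\; \sum_{v\in V(T_{n+s_n})\setminus V(T_n)} V_v,
\]
where $V_v$ is the local time of $X$ at $v$ during the window. The key per-vertex estimate I will aim for is
\[
\mathbb{E}[V_v] \;=\; O(s_n/M_n)
\]
uniformly over red vertices $v$. The heuristic: on a tree of size $\sim M_n$, simple random walk in $s_n\gg M_n^2$ steps spends expected local time $\Theta(s_n/M_n)$ at any fixed leaf, since the stationary mass of a leaf is $1/(2|E|)=\Theta(1/M_n)$. Granting this estimate, linearity of expectation together with the bound on $\mathbb{E}[R]$ gives
\[
\mathbb{E}[N_{n, n+s_n}] \;\le\; \mathbb{E}[R]\cdot O(s_n/M_n) \;=\; O\!\left(n^{3-4\gamma+2\delta}\right),
\]
and hence
\[
\frac{\mathbb{E}[N_{n,n+s_n}]}{s_n} \;=\; O\!\left(n^{1-2\gamma+\delta}\right) \;\longrightarrow\; 0
\]
since $\delta<2\gamma-1$, which is exactly the required conclusion.

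The hard part is the per-vertex estimate $\mathbb{E}[V_v]=O(s_n/M_n)$. The walker is not a simple random walk on a fixed tree: $T_j$ evolves online, with red leaves attached at the walker's current position, so neither reversibility on a fixed graph nor a naive stochastic domination applies off the shelf. I envisage two routes. The first is a coupling of $X$ with simple random walk on the terminal tree $T_{n+s_n}$ (or with simple random walk on $T_n$ viewed as a subtree of $T_{n+s_n}$), controlling the discrepancy by the fact that only $R=o(M_n)$ new vertices appear during the window. The second is an excursion analysis: each sojourn in a red subtree is typically of length $O(1)$, because a further red addition during a red excursion occurs only with probability $\Theta(n^{-\gamma})$ per step, and the total number of excursions can be controlled by $\mathbb{E}[R]$ together with the rate at which $X$ returns to the (few) black vertices carrying a red subtree. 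Either route leans crucially on the two estimates $\mathbb{E}[R]=o(M_n)$ and $s_n\gg M_n^2$, both encoded in the assumption $\delta<2\gamma-1$.
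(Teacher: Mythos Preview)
Your outline---count the red vertices and multiply by a per-vertex visit bound---is natural, and the arithmetic at the end is correct \emph{conditionally on} the estimate $\mathbb{E}[V_v\mid v\text{ is added}]=O(s_n/M_n)$. But that estimate is precisely where all the work lies, and you do not prove it; both ``routes'' you describe remain at the level of heuristics. The stationary-leaf intuition is valid only for a \emph{fixed} leaf in a \emph{fixed} tree. Here a red vertex $v$ is born at the walker's current position, and each subsequent visit to $v$ has probability $\Theta(n^{-\gamma})$ of attaching a further red leaf to $v$; iterating, a red \emph{subtree} hangs off $v$. An excursion into a red subtree of current size $r$ has expected length $\Theta(r)$ before returning to $T_n$, not $O(1)$ as you suggest, and during that excursion the subtree may grow further. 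Neither a coupling with SRW on $T_{n+s_n}$ (which freezes a structure that in reality grows adaptively at the walker's location) nor the ``typically $O(1)$ sojourn'' claim handles this feedback. Controlling it is the essential content of the lemma.

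The paper takes a different route that sidesteps a direct visit estimate. The key observation is a \emph{visits-to-degree} conversion: since each visit to $v_k$ independently increases its degree with probability at least $(n+s_n+m)^{-\gamma}$, one has the stochastic domination $D_{k,n+s_n}\succ Z_k\bigl(1+\mathrm{Bin}(N_{n,n+s_n}(v_k),(n+s_n+m)^{-\gamma})\bigr)$, which yields $\mathbb{E}[N_{n,n+s_n}(v_k)]\le n^{\gamma}(1+o(1))\,\mathbb{E}[Z_k(D_{k,n+s_n}-1)]$. The problem is thus reduced to showing that the degree of a red vertex stays bounded on the window, i.e.\ a tail bound $\mathbb{P}(D_{k,n+s_n}\ge d)\le C k^{-\gamma}n^{-\varepsilon(d-1)}$. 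This is obtained by a recursion in $d$: the number of visits to $v_k$ while it has degree exactly $d$ is compared (via the strong Markov property at the first time $v_k$ reaches degree $d$) with the local time of SRW on a fixed tree of size $\ge|T_n|$, and then a geometric argument controls the probability that one of those visits triggers a degree increase. This recursive degree control is the substantive technical ingredient you are missing; without something equivalent, the per-vertex bound $O(s_n/M_n)$ is unjustified.
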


The proof of the above lemma is  given in  Subsection~\ref{ss:proofnst}.

\begin{proof}[Proof of Theorem \ref{thm:rec}] 
For a fixed $n$, let us denote by $S_{n, n+ n^{2(1-\gamma)+\delta}}$ the number of transitions of  the random walk $X$ on $T_n$, between time $n$ and $n+ n^{2(1-\gamma)+\delta}$, i.e., 
\[
S_{n, n+ n^{2(1-\gamma)+\delta}}:=\sum_{j=n+1}^{n+ n^{2(1-\gamma)+\delta}}
\mathbb{1}\{X_{j-1}\in V(T_n)\} \mathbb{1}\{X_j\in V(T_n)\}\;.
\]
Observe that if $S_{n,n + n^{\myexp}} \leq n^{\myexp}/2$, then $N_{n, n+ n^{\myexp}} \ge n^{\myexp}/4$. 
Thus, by 
Markov's inequality and Lemma~\ref{lemma:nst}, for $\delta<2\gamma -1$, we have that  
  \begin{equation*}
      \mathbb{P}_{T,x; \mathcal{L}^{(m)}} \left(S_{n, n+ n^{\myexp}} \le n^{\myexp}/2\right) = o(1)\;.
    \end{equation*}
Let $\widetilde{X}$ denote the walker $X$ seen only when it makes transitions over $T_n$. Specifically, let   $\phi_0\equiv n$ and, for $i\geq 1$, define recursively $\phi_i:= \inf\{\ell >\phi_{i-1}: X_\ell \in V(T_n)\}$. Now,  for $m\geq 0$,  consider the process $Y_m:=X_{\phi_m}$, which corresponds to the walker $X$ seen only when visits vertices in  $T_n$. Note that $Y$ is a lazy random walk on $T_n$ and may not be symmetric since the  probability of taking a self-loop depends on the red substructure dangling from the corresponding  vertex of $T_n$. Setting $\sigma_0\equiv 0$ and recursively for $j\geq 1$, $\sigma_j:= \inf\{m> \sigma_{j-1}: Y_m \neq Y_{\sigma_{j-1}}\}$, we can define $\widetilde{X}_k:=Y_{\sigma_k}$.  Note that  the process $\{\widetilde{X}_k\}_{k\geq 0}$ is a SSRW on $T_n$.  

Let $v \in V(T_n)$ and denote by  $A_n$ the following event:
  \begin{equation*}
        A_n := \left \lbrace X \text{ visits $v$ in the interval }[n, n+ n^{\myexp}] \right \rbrace\;.
    \end{equation*}
    Recall that $V_n:=\vert V(T_n)\vert$ is a sum of $n$ independent Bernoulli random variables for which it holds that 
    \begin{equation*}
   \mathbb{E}_{T,x; \mathcal{L}^{(m)}} \left[V_n \right] = \Theta\left(n^{1-\gamma} \right)\;.
    \end{equation*}
    Thus, by the above identity and Chernoff bounds there exist positive constants $C_1$ and $C_2$ depending on $|T|$ and $\gamma$ only such that
    \begin{equation*}
        \mathbb{P}_{T,x; \mathcal{L}^{(m)}} \left( V_n \ge C_1n^{1-\gamma}\right) \le e^{-C_2n^{1-\gamma}}\;.
    \end{equation*}
    
    On the event $\{V_n \le C_1n^{1-\gamma}\}$, $\widetilde{X}$ is a SSRW on a tree with at most $C_1n^{1-\gamma}$ vertices. By Theorem~2 in \cite{brightwellwinkler1990}, which states that the expected cover time of a SRRW in a tree with $k$ vertices is at most $2k^2$, Markov's inequality  yields that
    \begin{equation*}
    \mathbb{P}_{T,x; \mathcal{L}^{(m)}} \left( A^c_n , S_{n, n+n^{\myexp}} \ge n^{\myexp}/2, V_n \le C_1n^{1-\gamma}\right) = o(1)\;.
    \end{equation*}
    Indeed, on the event $\{A_n^c,S_{n,n+ n^{\myexp}} \ge n^{\myexp}/2, V_n \le C_1n^{1-\gamma}\}$, $\widetilde{X}$ takes at least $n^{\myexp}/2$ steps in a tree with at most $C_1 n^{1-\gamma}$ vertices and did not cover it (specifically, it  did not visit the vertex $v$). So, on this event, the time to cover $T_n$ is greater than $n^{\myexp}/2$, whereas its expected value is $O(n^{2(1-\gamma)})$. From the above we deduce that 
    $$
    \mathbb{P}_{T,x; \mathcal{L}^{(m)}} (A_n^c) = o(1)\;.
    $$
 Finally, by reverse Fatou, it follows that
    $$
    \mathbb{P}_{T,x; \mathcal{L}^{(m)}}\left( \limsup_n A_n\right) \ge \limsup_n \mathbb{P}_{T,x; \mathcal{L}^{(m)}} (A_n) = 1\;,
    $$
    which proves that the vertex $v$ of $T_n$ is visited infinitely often with probability one. 
    So, the above actually proves that the walker visits any vertex i.o., even those eventually added by the process.
\end{proof}

\subsection{Proof of Lemma \ref{lemma:nst}}\label{ss:proofnst}
 Lemma \ref{lemma:nst} states that in the time window $[n, n+n^{\myexp}]$ the walker spends $o(n^{\myexp})$ steps on the red vertices -- the ones added in the same time window. The natural direction to prove such result would be to prove that the expected number of visits to each new vertex is small enough and then sum over the random number of vertices we could add from $n$ to $n+n^{\myexp}$. However, the expected number of visits to a vertex is sensitive to its degree, which, in our model, can increase over time.
So,  our proof consists of the following two steps:

\begin{itemize}
    \item[$(i)$] bounding from above  the expected number of visits to a red vertex in the time window $[n, n+n^{2(1-\gamma)+\delta}]$ by a factor times its expected degree at time $n + n^{\myexp}$ (see, Equation~\eqref{eq:step_i});
    \item[$(ii)$] controlling the evolution of the degree  by showing that there exists $d_0 = d(\gamma)$ such that is extremely unlikely that a red vertex reaches degree greater than $d_0$ (see, Proposition~\ref{lem:step_ii}). 
\end{itemize}

Before proving Lemma~\ref{lemma:nst} we introduce some instrumental notions and an auxiliary result which is a quantitative version of step $(ii)$.

For $k \in \mathbb{N}$ we let $v_k$ be the vertex possibly added at time $k$, depending on the value of $Z_k$ (if $Z_k=0$ the vertex $v_k$ is not added to the tree). 
For $t\geq k$, let $D_{k,t}$ denote the degree of vertex $v_k$ at time $t$, i.e., 
\begin{equation}\label{eq:deg}
    D_{k,t} := \begin{cases}
        \mathrm{deg}_t(v_k), & \text{ if }Z_k = 1; \\
        0, & \text{otherwise}\;.
    \end{cases}
\end{equation}
Note that  if a vertex $v_k$ is not added (i.e., $Z_k=0$) then $D_{k,t}=0$ for all $t$. 
As mentioned above (step $ii$)), the proof of  Lemma~\ref{lemma:nst} relies on controlling the evolution of the degree and this is formally stated in the proposition  below  (whose   quite-technical proof is deferred to Subsection~\ref{sec:step_ii}). 

\begin{proposition}
\label{lem:step_ii}
Let $\gamma \in (1/2,1]$ and $\delta<2\gamma -1$. 
Fix the natural numbers $m$ (time shift) and  $d$ (degree).  Then, there exists a positive constant~$C_1$ depending on $\gamma$,$|T|$ and $d$, and there exists $\varepsilon>0$ (depending on $\gamma$ and $\delta$, but not on $d$),  such that for all large $n$'s and all $k\ge n$,
\[
\mathbb{P}_{T,x;\mathcal{L}^{(m)}} (D_{k,n+ n^{\myexp}} \ge d)\le \frac{C_1}{k^\gamma n^{\varepsilon (d-1)}}.
\]
\end{proposition}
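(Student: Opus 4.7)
The plan is to interpret the event $\{D_{k,T}\geq d\}$ (with $T:=n+n^{\myexp}$ throughout) as ``$v_k$ is added to the tree \emph{and} at least $d-1$ additional growth events occur at $v_k$,'' and to control the latter via a factorial-moment Markov inequality. Writing $\mathbb{P}$ for $\mathbb{P}_{T,x;\mathcal{L}^{(m)}}$ and setting $I_s:=Z_{s+m}\,\mathbb{1}\{X_{s-1}=v_k\}$, $N:=\sum_{s=k+1}^{T}I_s$, one has $\{D_{k,T}\geq d\}=\{Z_{k+m}=1,\,N\geq d-1\}$, so
\[
\mathbb{P}(D_{k,T}\geq d)\le \mathbb{P}(Z_{k+m}=1)\,\mathbb{P}(N\geq d-1\mid Z_{k+m}=1)\le k^{-\gamma}\,\mathbb{P}(N\geq d-1\mid Z_{k+m}=1),
\]
using $\mathbb{P}(Z_{k+m}=1)=(k+m)^{-\gamma}\leq k^{-\gamma}$. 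This already accounts for the $1/k^{\gamma}$ prefactor, and it remains to show that the conditional factor is at most $C_d\,n^{-\varepsilon(d-1)}$ for some $\varepsilon>0$ depending only on $\gamma$ and $\delta$.

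For that estimate I would use
\[
\mathbb{P}(N\geq d-1)\le \frac{\mathbb{E}[N(N-1)\cdots(N-d+2)]}{(d-1)!}=\sum_{k<s_1<\cdots<s_{d-1}\le T}\mathbb{E}\bigl[I_{s_1}\cdots I_{s_{d-1}}\bigr],
\]
and bound each summand by two multiplicative ingredients. First, since each $Z_{s_i+m}$ is independent of $\mathcal{F}_{s_i-1}$, iterated conditioning pulls it out as $p_{s_i+m}\leq n^{-\gamma}$ (here I use $s_i\ge n$), giving a factor $n^{-\gamma(d-1)}$. Second, the remaining joint indicator $\prod_i\mathbb{1}\{X_{s_i-1}=v_k\}$ is bounded by the heuristic that the walker sits on $v_k$ at any given time with probability at most $C\deg(v_k)/|V(T_\cdot)|\lesssim d/n^{1-\gamma}$. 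The denominator uses a Chernoff bound applied to $\sum_{j\le T}Z_{j+m}$ (whose mean is $\Theta(n^{1-\gamma})$ for $\gamma\in(1/2,1]$) to ensure $|V(T_s)|=\Theta(n^{1-\gamma})$ throughout the window with overwhelming probability; the numerator uses that the event $\{D_{k,T}\ge d\}$ restricts attention to $\deg(v_k)\le d$. Compounding the $d-1$ visit bounds and summing over the $\binom{T-k}{d-1}\leq n^{(2(1-\gamma)+\delta)(d-1)}/(d-1)!$ tuples then yields
\[
\mathbb{P}(N\geq d-1)\le \frac{(Cd)^{d-1}}{(d-1)!}\,n^{(2(1-\gamma)+\delta-1)(d-1)}=\frac{(Cd)^{d-1}}{(d-1)!}\,n^{-\varepsilon(d-1)},\qquad \varepsilon:=2\gamma-1-\delta,
\]
which is strictly positive by the hypothesis $\delta<2\gamma-1$ and manifestly independent of $d$.

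The hard part is justifying the visit bound $\mathbb{P}(X_{s-1}=v_k\mid \mathcal{F}_\tau)\lesssim d/n^{1-\gamma}$. On time scales beyond the mixing time of the tree -- which, since $|V(T_\cdot)|=\Theta(n^{1-\gamma})$, is at most of order $n^{2(1-\gamma)}$ -- this is simply the stationary frequency $\deg(v_k)/2|E(T_s)|$, because the walker is a simple random walk on a tree and the invariant measure is proportional to the degree. The delicate point is that in the iterative conditioning the gap $s-1-\tau$ may be as small as one step, in which case the walker might be at a leaf adjacent to $v_k$ and return deterministically, so the conditional probability need not be of order $d/n^{1-\gamma}$. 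I plan to handle this either by splitting the sum over $(s_1,\ldots,s_{d-1})$ according to whether all consecutive gaps exceed the mixing scale $n^{2(1-\gamma)}$ (where the stationary bound applies) and controlling the few remaining tuples by the compensating smallness of their $Z$-factors, or, equivalently, by replacing the pointwise visit bound with a moment estimate $\mathbb{E}[V_T^{k}]\le C_k(d\,n^{1-\gamma+\delta})^k$ on the local time $V_T$ of the walker at $v_k$, proved via a renewal/excursion decomposition, and then using the stochastic domination $N\preceq_{\mathrm{st}}\mathrm{Bin}(V_T,n^{-\gamma})$ to conclude. Either route reduces the proposition to standard estimates for simple random walks on trees, combined with the tree-size concentration mentioned above.
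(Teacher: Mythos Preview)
Your factorial-moment strategy is a natural alternative, but as written it has two genuine gaps that the paper's argument is designed to avoid.

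The first is a circularity between the visit bound and the degree. In the expansion $\sum_{s_1<\cdots<s_{d-1}}\mathbb{E}[I_{s_1}\cdots I_{s_{d-1}}]$ the tuple is an \emph{arbitrary} increasing selection of growth times at $v_k$, not necessarily the first $d-1$ such times; on $\{I_{s_1}=\cdots=I_{s_{d-1}}=1\}$ there may be many further growth events at times outside the tuple, so $\deg(v_k)$ at time $s_j$ is uncontrolled. Your sentence ``the event $\{D_{k,T}\ge d\}$ restricts attention to $\deg(v_k)\le d$'' is backwards---that event says the degree ends up \emph{at least} $d$. This can be repaired by replacing $N$ with $\tilde N:=\sum_s I_s\,\mathbb{1}\{\deg_{s-1}(v_k)<d\}$ (one still has $\{D_{k,T}\ge d\}\subset\{Z_{k+m}=1,\ \tilde N\ge d-1\}$), but you do not do this and it is not automatic from what you wrote.

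Even granting that repair, neither proposed route closes. For~(a), restricting one gap to length $\le n^{2(1-\gamma)}$ saves only a factor $n^{-\delta}$ in the tuple count, while dropping one visit factor costs $n^{1-\gamma}/d$; for $\gamma$ close to $1/2$ the hypothesis forces $\delta<2\gamma-1\ll 1-\gamma$, so the ``bad'' tuples dominate. And even on large gaps there is no stationary distribution to appeal to, since the tree keeps growing during the gap. For~(b), the domination $N\preceq\mathrm{Bin}(V_T,n^{-\gamma})$ is plausible, and on a \emph{fixed} chain one does have $\mathbb{E}\binom{L_T}{m}\le(\mathbb{E}_{v_k}L_T)^m$ by iterated conditioning; but turning this into a bound for the local time on a \emph{growing} tree still requires coupling the walker (restricted to a subtree) with a SRW on a fixed tree---and that coupling is exactly the substance of the paper's Lemma~\ref{lemma:expecnst}, which you have not supplied.

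The paper sidesteps all of this via a one-step recursion in $d$ (Lemma~\ref{lemma:rec}):
\[
\mathbb{P}\bigl(D_{k,n+n^{\myexp}}\ge d+1\bigr)\ \le\ C\,n^{-\varepsilon}\,\mathbb{P}\bigl(D_{k,n+n^{\myexp}}\ge d\bigr)+\text{(negligible)},
\]
iterated down to $\mathbb{P}(D\ge 1)=(k+m)^{-\gamma}$. The recursion conditions on the number $N^{(d)}$ of visits to $v_k$ \emph{while its degree is exactly $d$}: if $N^{(d)}\le n^{\gamma-\varepsilon}$, a union bound over those visits (each triggering growth with probability $\le n^{-\gamma}$) gives the $n^{-\varepsilon}$ factor; otherwise Markov's inequality on $\mathbb{E}[N^{(d)}]$ applies. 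That expectation (Lemma~\ref{lemma:expecnst}) is bounded by applying the strong Markov property at the stopping time $\eta_{k,d}$ and coupling the walker, restricted to $T_{\eta_{k,d}}$, with a SRW on that fixed tree (Lemma~\ref{lem:fixedtree}). Stratifying by the \emph{exact} current degree is precisely what breaks the circularity your factorial-moment expansion runs into.
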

Given $k\in \mathbb{N}$ (vertex index), $t,s \in \mathbb{N}$ (times) with $k\geq t$   and $d\geq 1$ (degree),   we denote by  $N_{t,t+s}^{(d)}(v_k)$  the number of visits to $v_k$ when it has degree $d$ in the time interval $[t,t+s]$. Formally, 
\begin{equation}\label{eq:N}
    N_{t,t+s}^{(d)}(v_k) := \begin{cases}
        \sum_{j=t}^{t+s}\mathbb{1}\{X_j = v_k, D_{k,j}=d\}\;, & \text{ if }Z_k = 1;\\
        0\;, & \text{ otherwise}\;.
    \end{cases}
\end{equation}
%

\begin{proof}[Proof of Lemma \ref{lemma:nst}] 
Recall that $N_{n,n+n^{\myexp}}$ denotes the number of visits to red vertices from $n$ to $n+n^{\myexp}$. The latter can be written as 
\[
    N_{n,n+n^{\myexp}} = \sum_{k=n}^{n+n^{\myexp}} N_{n,n+n^{\myexp}}(v_k)\;,
\]
 where $N_{n,n+n^{\myexp}}(v_k)$ denotes the number of visits to the vertex $v_k$ in the interval $[n,n+n^{\myexp}]$.
Moreover, to avoid clutter, let us introduce the shorthand $D^{(\delta)}_{n,k}:=D_{k,n+n^{2(1-\gamma)+\delta}}$, i.e., the degree  of vertex $v_k$ (with $k\geq n$) at time $n+n^{2(1-\gamma)+\delta}$.
Recall that, by the definition in \eqref{eq:deg},  $D^{(\delta)}_{n,k}=Z_kD^{(\delta)}_{n,k}$. Moreover, since $p_n$ is decreasing, it follows that
\begin{equation*}
Z_k D^{(\delta)}_{n,k} \succ Z_k\left(1+ \mathsf{Bin}\left(N_{n,n+n^{\myexp}}(v_k), \frac{1}{(n+n^{\myexp} + m)^\gamma}\right)\right)\;,
\end{equation*}
    under $ \mathbb{P}_{T,x;\mathcal{L}^{(m)}}$. Here $\succ$ denotes stochastic domination and $\mathsf{Bin}$ the binomial distribution. The above stochastic domination implies that 
    \begin{equation}\label{eq:step_i}
    \begin{split}
      \mathbb{E}_{T,x;\mathcal{L}^{(m)}} \left[N_{n,n+n^{\myexp}}(v_k)\right] &\le (n+n^{\myexp} + m)^\gamma \; \mathbb{E}_{T,x;\mathcal{L}^{(m)}} \left[Z_k\left(D^{(\delta)}_{n,k} -1\right)\right]
      \\
      &\leq n^\gamma(1+o(1)) \,\mathbb{E}_{T,x;\mathcal{L}^{(m)}} \left[Z_k\left(D^{(\delta)}_{n,k} -1\right)\right]\;,
      \end{split}
    \end{equation}
    where, the last inequality holds since $\gamma>1/2$ and $\delta< 2\gamma -1$.
    In order to control the RHS  of \eqref{eq:step_i}, note  that 
    \begin{equation*}
        \begin{split}
      \mathbb{E}_{T,x;\mathcal{L}^{(m)}} \left[Z_k\left(D^{(\delta)}_{n,k} -1\right)\mathbb{1}\{2 \le D^{(\delta)}_{n,k}\le d_0\} \right]
      & \le d_0 \,\mathbb{P}_{T,x;\mathcal{L}^{(m)}}(D^{(\delta)}_{n,k}\ge 2) \;,
        \\        \mathbb{E}_{T,x;\mathcal{L}^{(m)}} \left[Z_k\left(D^{(\delta)}_{n,k} -1\right)\mathbb{1}\{ D^{(\delta)}_{n,k}\ge d_0\} \right] & \le n^{\myexp}\, \mathbb{P}_{T,x;\mathcal{L}^{(m)}}(D^{(\delta)}_{n,k}\ge d_0)\;, 
        \end{split}
    \end{equation*}
    where, the bottom  inequality follows from noticing that 
    $D_{n,k}^{(\delta)}$ is at most $n^{2(1-\gamma)+\delta}$ (since a vertex added after time $n$ can, by time $n+n^{\myexp}$, have at most $n^{\myexp}$ neighbors). Moreover, by Proposition~\ref{lem:step_ii}, we have that for every $d\geq 1$ there exists $\varepsilon \in (0, \gamma)$ (which does not depend on $d$) and a positive constant $C$ (depending on $\gamma$,  |T| and $d$) such that 
    \[
    \mathbb{P}_{T,x;\mathcal{L}^{(m)}}(D^{(\delta)}_{n,k}\ge d)\leq \frac{C}{k^{\gamma} n^{\varepsilon(d-1)}}\;,
    \]
    which implies 
    \[
    \mathbb{E}_{T,x;\mathcal{L}^{(m)}} \left[Z_k\left(D^{(\delta)}_{n,k} -1\right)\right] \leq d_0\;  \frac{C_1}{k^{\gamma}n^\varepsilon} \; + \; n^{\myexp} \;  \frac{C_2}{k^{\gamma}n^{\varepsilon(d_0 -1)}}\;.
    \]
    Thus,  
    \begin{align*}
    \mathbb{E}_{T,x; \mathcal{L}^{(m)}}\left[N_{n, n + n^{\myexp}}\right] &\leq  n^{\gamma} (1+o(1)) \left(C_1 n^{-\varepsilon} d_0  + C_2 \frac{n^{\myexp}}{ n^{\varepsilon (d_0-1)}}\right) \sum_{k=n}^{n+n^{\myexp}}k^{-\gamma}
    \\
    &\leq (1+o(1))\left(C_1 n^{-\varepsilon} d_0  + C_2 \frac{n^{\myexp}}{ n^{\varepsilon (d_0-1)}}\right) n^{\myexp}\;,
    \end{align*}
    where, in the last inequality we use the trivial bound  $\sum_{k=n}^{n+n^{\myexp}} \frac{1}{k^{\gamma}} \le n^{\myexp -\gamma}$. 

    Since  $\varepsilon>0$  we clearly have that $
    \frac{n^{\myexp}}{n^\varepsilon}=o(n^{\myexp})$, while choosing $d_0$   large enough such that
    $\varepsilon(d_0-1) > 2(1-\gamma) + \delta$ implies that
    \begin{equation*}
    \frac{n^{ 2(1-\gamma)+\delta + 2(1-\gamma) +\delta}}{n^{\varepsilon(d_0-1)}} = o(n^{\myexp}),
    \end{equation*}  
    and we are done.\end{proof}

\subsubsection{Proof of Proposition~\ref{lem:step_ii}} \label{sec:step_ii}

The proof of  Proposition~\ref{lem:step_ii} relies on three  auxiliary results, namely Lemmas \ref{lem:fixedtree}, ~\ref{lemma:expecnst} and ~\ref{lemma:rec},  which will be presented below. 
 
The first lemma, a general result for SSRW on {\em fixed} trees,  will provide a basic step for the estimation of the dynamically changing trees later. 
\begin{lemma}\label{lem:fixedtree} Let $T$ be a finite tree with $|T|\ge 2$ and $v\in\mathcal{V}(T)$. Denote by  $N_t(v)$  the number of returns to $v$ in $t$ steps of a SSRW on~$T$. Then the following bound holds:
\[
     E_v\left[N_t(v)\right] \le \mathsf{deg}_T(v)\left[\frac{t+3}{2(|T|-1)}+24|T|\right]\;,
\]
where the expectation $E$ corresponds to the SSRW.
\end{lemma}

\begin{proof} We will make use of the Renewal Theorem where $R$ will denote the renewal function, corresponding to the random variable $Z_1\ge 0$. By formula (2) in\cite{Stone1972}, the renewal function $R$ satisfies the bound
\begin{equation}\label{eq: Stone.bound}
R(s)\le \frac{s}{\mu}+3\, \frac{\mu_{2}}{\mu^{2}},\ \forall s\ge 0,
\end{equation}
where $\mu=\mu_1$ and $\mu_i$ denotes the $i$-th moment ($i=1,2$) of  $Z_1$. 

In our case $Z_1$ will be the first return time $\tau_v$ and $R(t)=E_v \left[N_t(v)\right]$. To estimate $\mu_2,$ we are going  to use Theorems 4.1 and 5.1 in \cite{Moon1973}. Using our notation, they say that, when $v$ is {\it considered the root of the tree} and $M_i(v):=E_v\left[\tau_v^i\right]\ i=1,2$, one has,
\begin{equation}\label{eq: firstmomreturn}
   \mu= M_1(v)=\frac{2(|T|-1)}{\mathsf{deg}_T(v)};
\qquad
M_2(v)-M_1(v)=\frac{8}{\mathsf{deg}_T(v)}\sum_{y\neq v}|T_y|(|T_y|-1),
\end{equation}
where $T_y$ denotes the sub-tree of $y$, relative to the root $v$. (That is, $T_y$ is the tree formed by those vertices, for which the unique path between them and the root must include $y$.)
Hence,
\begin{equation*}
\frac{M_{2}(v)}{(M_{1}(v))^{2}}=\frac{1}{M_{1}(v)}+\frac{8}{\mathsf{deg}_T(v)(M_{1}(v))^{2}}\sum_{y\neq v}|T_y|(|T_y|-1).
\end{equation*}
Thus, by \eqref{eq: firstmomreturn},
\begin{equation*}
    \frac{M_{2}(v)}{(M_{1}(v))^{2}}\le\frac{\mathsf{deg}_T(v)}{2(|T|-1)}+\frac{8\,\mathsf{deg}_T(v)}{4(|T|-1)^{2}}\sum_{y\neq v}|T_y|(|T_y|-1),
\end{equation*}
and by using that $|T_y|\le |T|$, one obtains that


\begin{equation}\label{eq: withcube}
    \ \frac{M_{2}(v)}{(M_{1}(v))^{2}}\le\frac{\mathsf{deg}_T(v)}{2(|T|-1)}+\frac{2\,\mathsf{deg}_T(v)}{(|T|-1)^{2}} |T|^3.
\end{equation}
Since $|T|\ge 2$  and $x\ge 1$ implies $\frac{x}{x-1}\le 2$,
we may bound \eqref{eq: withcube} from above by
\begin{equation*}
    \frac{\mathsf{deg}_T(v)}{2(|T|-1)}+8\,\mathsf{deg}_T(v)\, |T|.
\end{equation*}
Plugging this back into \eqref{eq: Stone.bound}, we obtain that
\begin{equation*}
    \frac{R(t)}{t}\le \frac{1}{\mu}+\frac{3}{t}\left(\frac{\mathsf{deg}_T(v)}{2(|T|-1)}+8\,\mathsf{deg}_T(v)\, |T|\right),
\end{equation*}
and we are done by writing $E_v \left[N_t(v)\right]$ in place of $R(t)$ and using \eqref{eq: firstmomreturn}.
\end{proof}

 Since in \name the domain of the walker changes as it walks, we will need to keep track of the random times when new vertices are added and the times when they have their degrees increased. For this reason, we will need some more definitions.
 
For $d \in \mathbb{N}$, define inductively the following sequence of stopping times 
\begin{equation*}
    \eta_{k,1} := \begin{cases}
        k, & \text{ if }Z_k = 1; \\
        +\infty, & \text{ otherwise}\;,
    \end{cases}
\end{equation*}
and for $d\ge 2$,
\[
\eta_{k,d} :=  \begin{cases}
\inf \{ t > \eta_{k,d-1} \; : \; D_{k,t} = d\}\;, & \text{ if $\eta_{k,d-1} <\infty$;
}
\\
+\infty\;, & \text{ otherwise}\;.
\end{cases}
\]
 So, $\eta_{k,d}$ is the first time $v_k$ reaches degree $d$ (if $v_k$ is not added then $\eta_{k,d}=+\infty$, for all $d\geq 1$). 

Recall  that, given $k\in \mathbb{N}$ (vertex index), $t,s \in \mathbb{N}$ (times) with $k \geq t$ (we are interested in vertices possibly added after time $t$) and $d\geq 1$ (degree),     $N_{t,t+s}^{(d)}(v_k)$ denotes  the number of visits to $v_k$ when it has degree $d$ in the time interval $[t,t+s]$. 
Regarding $N_{t,t+s}^{(d)}(v_k)$ we have the following result: 
\begin{lemma}\label{lemma:expecnst} Fix some natural numbers  $m$ (time shift), $k$ (vertex index), $d$ (degree). Then there exist positive constants $C_1$ and $C_2$ depending on $\gamma$ only such that
    \begin{align*}
    \mathbb{E}_{T,x;\mathcal{L}^{(m)}} \left[N_{t,t+s}^{(d)}(v_k)\right] \le  C_1 d&\left(\frac{s}{t^{1-\gamma} -1  }{+ (|T|+t+s)^{1-\gamma}}\right)\mathbb{P}_{T,x;\mathcal{L}^{(m)}}\left( \eta_{k,d} < t+s\right) 
    \\
    &{ + 27 d(|T|+t+s)} \exp\left\{-C_2t^{1-\gamma}\right\}\;,
    \end{align*}
    $\forall t\ge k,\forall s\ge 0$.
\end{lemma}
\begin{proof} By the definition of $N_{t,t+s}^{(d)}(v_k)$ (see, \eqref{eq:N}) we have that 
$$N_{t,t+s}^{(d)}(v_k) = N_{t,t+s}^{(d)}(v_k)\mathbb{1}\{\eta_{k,d} < t+s\}.$$ 
Moreover,
%
     \begin{align*}
      \mathbb{1}\{\eta_{k,d} < t+s\} N_{t,t+s}^{(d)}(v_k) &\le \mathbb{1}\{\eta_{k,d} < t+s\} N_{\eta_{k,d},\eta_{k,d}+s}^{(d)}(v_k)\\
      &=\mathbb{1}\{\eta_{k,d} < t+s\} N_{0,s}^{(d)}(v_k)\circ \theta_{\eta_{k,d}}\;, \quad \mathbb{P}_{T,x;\mathcal{L}^{(m)}}\text{-a.s.}
    \end{align*}
    (This is because on the right-hand side, we only start counting the visits when order $d$ has already been reached.)   It is important to point out that $v_k$ in $N_{0,s}^{(d)}(v_k)$ is not the $k$-th vertex added by the shifted process, but the vertex $v_k$ which belongs to $T_{\eta_{k,d}}$ on the event $\{\eta_{k,d} < t+s\}$. Thus, by the Strong Markov Property it follows that 
    \begin{equation}\label{eq:smarkovnst}
      \mathbb{E}_{T,x;\mathcal{L}^{(m)}} \left[N_{t,t+s}^{(d)}(v_k)\right] \le \mathbb{E}_{T,x;\mathcal{L}^{(m)}}\left[ \mathbb{E}_{T_{\eta_{k,d}}, X_{\eta_{k,d}}; \mathcal{L}^{(m+\eta_{k,d})}} \left[N_{0,s}^{(d)}(v_k)\right]\mathbb{1}\{\eta_{k,d} < t+s\} \right]\;.
    \end{equation}
To bound the term
    \begin{equation}\label{handle}
    (*):=\mathbb{E}_{T_{\eta_{k,d}}, X_{\eta_{k,d}}; \mathcal{L}^{(m+\eta_{k,d})}} \left[N_{0,s}^{(d)}(v_k)\right] \mathbb{1}\{\eta_{k,d} < t+s\}\;,
    \end{equation} 
    we use coupling.
    To this end, 
    given $T_{\eta_{k,d}}$ and $X_{\eta_{k,d}}$, let $(W,P_{X_{\eta_{k,d}}})$ denote a simple random walk on $T_{\eta_{k,d}}$, starting from $X_{\eta_{k,d}}$, and  let $N^{W}_{s}$ be the number of returns to $v_k$ by the walker $W$ in $s$ steps.
     By ignoring some parts of $X$, we are going to couple  
    $\{X_i\}_{0\le i\le s}$  with $\{W_i\}_{0\le i\le s}$    (setting $X_0=W_0=X_{\eta_{k,d}}$) in such a way that  all possible subsequent visits of $X$ to $T_{\eta_{k,d}}(v_k)$ counted by  $N_{0,s}^{(d)}(v_k)$ are counted by $N^{W}_{s}$ as well.
   To do this, notice that 
   \\
   (i) $T_{\eta_{k,d}}(v_k)$ may reach order $d+1$ before completing $s$ steps, and hence the count in  $N_{0,s}^{(d)}(v_k)$ stops,
   \\
   (ii) $X$ may leave $T_{\eta_{k,d}}$ at other parts of the tree for ``excursions'' on the larger tree. 
   \\
   In  case (ii) we just delete those excursions outside of $T_{\eta_{k,d}}$, while in  case (ii) we must stop the walk $W$. Consequently, the walk $(W,P_{X_{\eta_{k,d}}})$  on $T_{\eta_{k,d}}$ obtained this way,   may take {\it less} than $s$ steps. To make up for that, and have exactly $s$ steps for $W$,  simply ``complete'' the remaining steps of  $W$ by letting it walking on $T_{\eta_{k,d}}$, independently of $X$. 
    It then follows that, given $T_{\eta_{k,d}}$ and $X_{\eta_{k,d}}$,
   one has
    \begin{equation}\label{itfollowsthat}
      (*)\le E_{X_{\eta_{k,d}}} \left[ N^{W}_{s} (X_{\eta_{k,d}})\right]+1\;.\
    \end{equation}
    (The reason we have to add one, is because we only count returns and exclude the original time.)
  Now, in order to bound the expected value in \eqref{itfollowsthat}, we exploit Lemma \ref{lem:fixedtree}. 
  It follows that, given $T_{\eta_{k,d}}$ and $X_{\eta_{k,d}}=v_k$,
 %
\begin{align}\label{eq: boundwith25}
(*) \le E_{v_k} \left[ N^{W}_{s}(v_k) \right] + 1 \le \frac{d\cdot  4s}{2(|T_{\eta_{k,d}}| -1)} +  25 d |T_{\eta_{k,d}}| 
      \le \frac{d\cdot 2s}{|T_t| -1} +  25 d |T_{t+s}|\;,
    \end{align}
    where, the last inequality holds since the assumption $k\geq t$ assures that   $T_{\eta_{k,d}}$ contains $T_t$,  and on the event $\{\eta_{k,d} < t+s\}$, one has $|T_{\eta_{k,d}}| \le |T_{t+s}|$. Plugging this into \eqref{eq:smarkovnst} leads us to 
    \begin{equation}\label{eq:expts}
      \mathbb{E}_{T,x;\mathcal{L}^{(m)}} \left[N_{t,t+s}^{(d)}(v_k)\right] \le d\cdot\mathbb{E}_{T,x;\mathcal{L}^{(m)}}\left[ \left( \frac{ 2 s}{|T_t| -1}  + 25 |T_{t+s}| \right) \mathbb{1}\{\eta_{k,d} < t+s\} \right]\;.
    \end{equation}
     Given the initial condition $(T,x)$, since the tree growth is governed by  the  shifted sequence of laws $\mathcal{L}^{(m)}=\{L_{m+n}\}_{n\ge 1}$,  it follows that, for every time $\hat{t}$,  
   we have
    \[
    |T_{\hat{t}}| =  |T| + \sum_{r=1}^{\hat{t}}Z_{r+m} \implies  \mathbb{E}_{T,x;\mathcal{L}^{(m)}} \left[ |T_{\hat{t}}| \right] 
    =|T|+\Theta \left(({\hat{t}}+m)^{1-\gamma}-m^{1-\gamma}\right)\;,
    \]
    and note that, since $\gamma\in (0,1)$, we have that 
    $(\hat{t}+m)^{1-\gamma}-m^{1-\gamma}\le \hat t^{1-\gamma}$ for all $m\ge 0$, 
    (and the constant involved in the $\Theta$ notation depends only on $\gamma$). Using that the $Z_i$ are independent Bernoullis, along with Chernoff bounds, there exist positive constants $C_2,C_2',C_3,C_3'$ depending on $\gamma$ only, such that 
    \begin{align*}
    &\mathbb{P}_{T,x;\mathcal{L}^{(m)}} \left( |T_t| \le C_2(|T| + t^{1-\gamma})\right) \le \exp\left\{-C_3t^{1-\gamma}\right\}; \\
&\mathbb{P}_{T,x;\mathcal{L}^{(m)}} \left( |T_{t+s}| \ge C_2'(|T| + (t+s)^{1-\gamma})\right) \le \exp\left\{-C_3't^{1-\gamma}\right\}\;. 
    \end{align*}
    Combining the above inequalities with \eqref{eq:expts} and using that 
 
    \begin{equation*}
        \begin{split}
        \mathbb{E}_{T,x;\mathcal{L}^{(m)}}\left[|T_{t+s}|\mathbb{1}\{\eta_{k,d} < t+s\} \right] & \le C'_2(|T| + (t+s)^{1-\gamma})\mathbb{P}_{T,x;\mathcal{L}^{(m)}}\left(\eta_{k,d}<t+s\right)  \\ 
        & \quad + (|T|+t+s)\exp\left\{-C'_3t^{1-\gamma}\right\}\;,
        \end{split}
    \end{equation*}    
    and 
        \begin{equation*}
        \begin{split}
        \mathbb{E}_{T,x;\mathcal{L}^{(m)}}\left[\frac{2s}{|T_{t}|-1}\mathbb{1}\{\eta_{k,d} < t+s\} \right] & \le \left(\frac{2s}{C_2(|T|+t^{1-\gamma})-1}\right)\mathbb{P}_{T,x;\mathcal{L}^{(m)}}\left(\eta_{k,d}<t+s\right)  \\ 
        & \quad + 2s\exp\left\{-C_3t^{1-\gamma}\right\}\;,
        \end{split}
    \end{equation*}   
    yields:
    \begin{align*}
&\mathbb{E}_{T,x;\mathcal{L}^{(m)}} \left[N_{t,t+s}^{(d)}(v_k)\right] \leq (2+25)d(|T|+t+s) \exp\left\{-{\min\{C_3,C_3'\}}t^{1-\gamma}\right\} 
    \\
    &+ \left(\frac{ { 2 }d \cdot  s}{C_2(|T|+t^{1-\gamma}) -1} { +25 d C'_2(|T| + (t+s)^{1-\gamma}) }\right)\mathbb{P}_{T,x;\mathcal{L}^{(m)}}\left( \eta_{k,d} < t+s\right)\;,
    \end{align*}
    which is enough to conclude the  result in the lemma. 
\end{proof}

The next result is a recursion for the tail probability of  $D_{k,t+s}$.

\begin{lemma}[Recursive bound]\label{lemma:rec} 
Fix natural numbers  $m$ (time shift), $k$ (vertex index), $d$ (degree), and fix  $\varepsilon < \gamma$. Then, there exist positive constants $C_1$ and $C_2$ depending on $\gamma$ such that for all $k\ge t$ and all $s\ge 0$,
\[
\mathbb{P}_{T,x;\mathcal{L}^{(m)}} (D_{k,t+s} \ge d+1)\le I+II\;,
\] 
where $I$ and $II$ are defined as follows. Using the shorthand 
{
\begin{equation}\label{eq: Delta}
        \Delta_d := \frac{C_1 d}{t^{\gamma - \varepsilon}}\left(\frac{ s}{t^{1-\gamma} -1 }+ (|T|+t+s)^{1-\gamma}\right),
\end{equation}}
\begin{align*}
          I:=&\, 
          \left\{\left(1+\frac{s}{t+m}\right)^{\gamma}
        \left[1 - \left( 1- \frac{1}{(t+s+m)^\gamma}\right)^{t^{\gamma-\varepsilon}}\right]+{\Delta_d}\right \} \mathbb{P}_{T,x;\mathcal{L}^{(m)}}
        \left( D_{k,t+s} \ge d\right);
         \\ II:=&\, \frac{{27d(|T|+t+s)}}{t^{\gamma - \varepsilon}}\exp\left\{-C_2t^{1-\gamma}\right\}\;.
          \end{align*}
\end{lemma}
\begin{proof} Let us begin by noticing  the  identity 
$  
\left \lbrace \eta_{k,d} \le t + s \right \rbrace = \left \lbrace D_{k,t+s} \ge d \right \rbrace,
$ 
and that
    \begin{equation}\label{eq:dktged}
        \begin{split}  \mathbb{P}_{T,x;\mathcal{L}^{(m)}} (D_{k,t+s} \ge d+1) & 
         \le \mathbb{P}_{T,x;\mathcal{L}^{(m)}} (D_{k,t+s} \ge d+1,  N_{t,t+s}^{(d)}(v_k) \le t^{\gamma - \varepsilon})\\ 
            & \quad + \mathbb{P}_{T,x;\mathcal{L}^{(m)}} ( N_{t,t+s}^{(d)}(v_k) \ge t^{\gamma - \varepsilon})\;.
        \end{split}
    \end{equation}
    By Markov's inequality and Lemma~\ref{lemma:expecnst} we have
    \begin{align}
    \label{eq:b1}
&\mathbb{P}_{T,x;\mathcal{L}^{(m)}} (  N_{t,t+s}^{(d)}(v_k) \ge t^{\gamma - \varepsilon}) \le  \Delta_d\mathbb{P}_{T,x;\mathcal{L}^{(m)}}\left( \eta_{k,d} < t+s\right) + \frac{ 27d(|T|+t+s)
     }
      {t^{\gamma-\varepsilon}} \exp\left\{
      -C_2t^{1-\gamma}
      \right\}\;.
    \end{align}
       For the first term of the RHS of \eqref{eq:dktged}, fix $j \le t^{\gamma-\varepsilon}$,  we then have the identity
    \begin{equation*}
        \left \lbrace D_{k,t+s} \ge d+1,  N_{t,t+s}^{(d)}(v_k) = j \right \rbrace = \left \lbrace  Z_{H'_j + 1} = 1, Z_{H'_{j-1}+1} = 0, \dots, Z_{H'_1+1} = 0, \eta_{k,d} < t+s \right \rbrace,
    \end{equation*}
    where $H'_i$ denotes the time of the $i$-th visit to $v_k$ after time $\eta_{k,d}$, i.e., after reaching degree $d$.
    In other words, the left-hand side of the above identity denotes the event in which $v_k$ has a degree at least $d+1$ before time $t+s$ and has been visited $j$ times while having a degree $d$. This means that at each of these $j$ visits to $v_k$, in the next step it has failed $j-1$ times to increase its degree to $d+1$ and only succeeds after the $j$-th visit, that is, at time $H'_j +1$. The failures and the success are described formally by the $Z_{H'_i+1}$'s. Using the independent nature of $Z$'s and the fact that $p_n$ is decreasing in $n$, we obtain the bound
    \begin{equation*}
        \begin{split}
\mathbb{P}_{T,x;\mathcal{L}^{(m)}} (D_{k,t+s} \ge d+1, &N_{t,t+s}^{(d)}(v_k) = j )
      \\
      &\le \frac{1}{(t+m)^\gamma}\left( 1 - \frac{1}{(t+s+m)^{\gamma}}\right)^{j-1}\mathbb{P}_{T,x;\mathcal{L}^{(m)}}\left( D_{k,t+s} \ge d\right)\;.
      \end{split}
    \end{equation*}
    Summing over $j$ from $1$ to $t^{\gamma - \varepsilon}$ leads to 
    \begin{align*}
       & \mathbb{P}_{T,x;\mathcal{L}^{(m)}} (D_{k,t+s} \ge d+1, N_{t,t+s}^{(d)}(v_k) \le t^{\gamma - \varepsilon}) \\
        &\qquad \le \frac{(t+s+m)^{\gamma}}{(t+m)^{\gamma}}\left[1 - \left( 1- \frac{1}{(t+s+m)^\gamma}\right)^{t^{\gamma-\varepsilon}}\right] \mathbb{P}_{T,x;\mathcal{L}^{(m)}}\left( D_{k,t+s} \ge d\right)\;,
    \end{align*}
which, combined with \eqref{eq:b1}, proves the result.
\end{proof}

We are finally  ready to prove Proposition~\ref{lem:step_ii}. 
\begin{proof}[Proof of Proposition~\ref{lem:step_ii}] 
 Set $t= n$ and $s=n^{\myexp}$, with $\delta<2\gamma -1$  and $\varepsilon<\gamma$ in Lemma~\ref{lemma:rec}, and recall the shorthand  $D^{(\delta)}_{n,k}=D_{k,n+n^{2(1-\gamma)+\delta}}$. Write $C'_1$ in place of $C_1$ in \eqref{eq: Delta} getting
           \begin{equation*}
               \begin{split}
                   \Delta_d  &=\\ &\!\!\frac{C'_1 d}{t^{\gamma - \varepsilon}}\left(\frac{ s}{(t^{1-\gamma} -1) }
                   + (|T|+t+s)^{1-\gamma}\right)
                   = \frac{C'_1d}{n^{\gamma - \varepsilon}}\left(\frac{ n^{\myexp}}{(n^{1-\gamma} -1) }+ (|T|+n+n^{\myexp})^{1-\gamma}\right)\;,
               \end{split}
           \end{equation*}
 and observe that since $\gamma\in(0,1)$ and we assumed that $\delta<2\gamma-1$, 
$$(|T|+n+n^{\myexp})^{1-\gamma}
\le |T|^{1-\gamma}+n^{1-\gamma}+n^{[2(1-\gamma)+\delta](1-\gamma)}\le |T|^{1-\gamma}+2n^{1-\gamma}\;,$$ hence
 \begin{equation}\label{eq: hence}
 \Delta_d \le \frac{C'_1d}{n^{\gamma - \varepsilon}}\left(\frac{ n^{\myexp}}{(n^{1-\gamma} -1) }+ |T|^{1-\gamma}+2n^{1-\gamma}\right)\le\frac{C_1d\cdot n^{\myexp}}{(n^{1-\gamma}-1)n^{\gamma-\varepsilon}} \;,
\end{equation}
for some positive constant $C_1$ depending on $\gamma$ and $|T|$ only.
By Lemma~\ref{lemma:rec} along with \eqref{eq: hence}, the probabilities $u(d):=\mathbb{P}_{T,x;\mathcal{L}^{(m)}} (D^{(\delta)}_{n,k} \ge d)$ satisfy that 
    \begin{equation*}
       \begin{split} 
        u(d+1)&\le 
      \left(\frac{ n+ n^{\myexp} + m}{n + m }\right)^\gamma\left(1 - \left(1-\frac{1}{(n+n^{\myexp} + m)^\gamma}\right)^{n^{\gamma -\varepsilon}} \right)  u(d)
      \\
      &\quad  +\frac{C_1dn^{\myexp}}{(n^{1-\gamma}-1)n^{\gamma-\varepsilon}}\,
      u(d)
    + \frac{{27d(|T|+n+ n^{\myexp})}}{n^{\gamma-\varepsilon}}\exp\left\{-C_2n^{1-\gamma}\right\}
    \\
      &\le \left(\frac{ 2n + m}{n + m }\right)^\gamma 
      \left(1 - \left(1-\frac{1}{(2n)^\gamma}\right)^{n^{\gamma -\varepsilon}} \right) u(d)\\
      &\quad +\frac{C_1dn^{\myexp}}{(n^{1-\gamma}-1)n^{\gamma -\varepsilon}}\,
      u(d)
    +\frac{{27d(|T|+n+ n^{\myexp})}}{n^{\gamma-\varepsilon}}e^{-C_2n^{1-\gamma}}\;.
       \end{split}
    \end{equation*}
    It follows that for all $n$  sufficiently large (recall that $1-\gamma\in (0,1/2)$ and $\delta<2\gamma -1$),  
\begin{align*}
       u(d+1) &\leq  \left(\frac{ C'_1}{n^{\varepsilon}}+\frac{C_1dn^{\myexp}}{(n^{1-\gamma}-1)n^{\gamma-\varepsilon}}\right) u(d)  + \frac{{27d(|T|+n+ n^{\myexp})}}{n^{\gamma-\varepsilon}}\exp\left\{-C_2n^{1-\gamma}\right\}\\
      &\leq  \left(\frac{ C'_1}{n^{\varepsilon}}+\frac{C_1dn^{\myexp}}{(n^{1-\gamma}-1)n^{\gamma-\varepsilon}}\right) u(d) + \frac{{27d(|T|+2n)}}{n^{\gamma-\varepsilon}}\exp\left\{-C_2n^{1-\gamma}\right\}  \\
      &\stackrel{n\ge |T|}{\le} \left(\frac{ C'_1}{n^{\varepsilon}}+\frac{C_1dn^{\myexp}}{(n^{1-\gamma}-1)n^{\gamma-\varepsilon}}\right) u(d) + 81dn^{1-\gamma+\varepsilon}\exp\left\{-C_2n^{1-\gamma}\right\}.
\end{align*}
Now, by choosing $\varepsilon$ so that 
    $$\theta:=
    1-\varepsilon -2(1-\gamma) - \delta > \varepsilon \iff \varepsilon < \frac12 (2\gamma-\delta-1)\in(0,\infty)\;,
    $$
it follows that for all $n$ sufficiently large,
\begin{align*}
  u(d+1)&\le \left(C'_1 n^{-\varepsilon}+C_1'' dn^{-\theta} \right)u(d) + d\exp\left\{-Kn^{1-\gamma}\right\}
  \\
  &
  \le \left(C'_1 +C_1'' d\right)n^{-\varepsilon}u(d) + d\exp\left\{-Kn^{1-\gamma}\right\}, 
\end{align*}
with some  constant $K>0$  depending on $C_2$ (which in turns depends on $\gamma$), and some $C_1''>0$ depending on $\gamma$ and $|T|$ only.

On the other hand, from the definition of $u(d)$, along with the above assumption that $2(1-\gamma)+\delta<1-2\varepsilon$, it is clear $u(d)=0$ unless $d\le 2n$  and $k\le n+n^{\myexp}$. Hence, we may and will only consider $d$'s (for a given $n$) that do not exceed $2n$  and $k\le n+n^{\myexp}$. 
It follows that for all $n\ge N_0(\gamma,d)\ge d/2$,
\begin{align}\label{eq:rec}
  u(d+1)\le \left(C'_1 +C_1'' d \right)n^{-\varepsilon}u(d) + \exp\left\{-Ln^{1-\gamma}\right\}  \;,
\end{align}
where $L:=K/2$. 
Continue the recursion \eqref{eq:rec} in $d$ backward all the way back to $u(1) = (k+m)^{-\gamma}\le k^{-\gamma}$, we obtain that for all $n\ge N_0(\gamma,d)\ge d/2$,
\begin{align*}
     u(d+1)&\le   \frac{1}{k^\gamma}\prod_{j=1}^{d} 
     \left(C_1' +C_1'' j\right)n^{-\varepsilon} + \exp\left\{-Ln^{1-\gamma}\right\}\sum_{j=0}^{d-1}
     \left(C_1' +C_1'' d\right)^j n^{-\varepsilon j}
     \\
      &    \le   
     \frac{1}{k^\gamma} 
     \left(C_1'+C_1'' d\right)^{d} n^{-d\varepsilon} + \exp\left\{-Ln^{1-\gamma}\right\}\sum_{j=0}^{d-1}
     \left(C_1' +C_1'' d\right)^j \\
     & = \left(C_1'+C_1'' d\right)^{d} n^{-d\varepsilon} + C_2\exp\left\{-Ln^{1-\gamma}\right\}\;,
\end{align*}
where $C_2:= \sum_{j=0}^{d-1}\left(C_1' +C_1'' d\right)^j$ is a constant depending on $\gamma, |T|$ and $d$ only. Moreover, since $k\le n+n^{\myexp}$, by replacing $\left(C_1'+C_1'' d\right)^{d}$ by a larger constant if necessary, there exists a positive constant $C_3$ depending on $\gamma, |T|$ and $d$ only, such that for all $k \in [n, n+n^{\myexp}]$ and $n\ge 1$
$$
C_2\exp\left\{-Ln^{1-\gamma}\right\} \le \frac{1}{k^\gamma} 
     C_3 n^{-d\varepsilon}\;,
$$
which implies that 
\begin{equation*}
    u(d+1) \le \frac{1}{k^\gamma} 
     C_3 n^{-d\varepsilon} + C_2\exp\left\{-Ln^{1-\gamma}\right\} \le  \frac{2C_3}{k^\gamma} n^{-d\varepsilon}\;.
\end{equation*}


Recalling that, by definition,
$u(d)=\mathbb{P}_{T,x;\mathcal{L}^{(m)}} (D^{(\delta)}_{n,k} \ge d)$,  the proof is complete.
\end{proof}

\section{Power-law degree distribution in \name trees (proof of Theorem \ref{thm:powerlaw})}

In this section, we prove  that the degree distribution of the random tree sequence $\{T_n\}_{n\geq 0}$ in the $\mathcal{L}$-\name with $L_n=\mathsf{Ber}(p_n)$, $p_n=\Theta(n^{-\gamma})$ and $\gamma \in (2/3,1]$ converges to a power-law distribution with exponent $3$, the very same limiting distribution as in the  Barabási-Albert model of \emph{preferential attachment} (PA) \cite{barabasi1999emergence,bollobas2001degree}. 

It will be useful to look at the random tree sequence~$\{T_n\}_{n\geq 0}$ only at the random times (``growth times'') when the \name adds new vertices to the tree. For this reason, we begin by recursively defining the following sequence of stopping times:
\begin{equation}\label{def:tau}
\tau_k := \inf \Lbrace n > \tau_{k-1} \, : \, Z_n = 1\Rbrace\;,
\end{equation}
where $\tau_0 \equiv 0 $. In words, $\tau_k$ is the time when the $k$-th (new) vertex is added. 

It will be useful to understand the asymptotic behavior of the stopping times $\{\tau_k \}_{k\ge 0}$. The following lemma, whose proof will be postponed to the end of this section, essentially tells us that under the regime $\gamma > 2/3$ we need to wait large amounts of time to see the walker adding another leaf.
\begin{lemma}[Growth times are rare]\label{lemma:limtauk} Consider a $\mathcal{L}$-\name, where $L_n = \mathsf{Ber}(p_n)$ with $p_n=\Theta(n^{-\gamma})$ and  $\gamma>2/3$. Then, there exists a sufficiently small $\delta > 0$, depending only on $\gamma$,  such that
	\begin{equation}\label{eq:tau-lemma}
	\lim_{k \rightarrow \infty} k^{2+\delta}\tau_{k}^{-\gamma} = 0\;, \quad  \Pd_{T_0,x; \mathcal{L}}\text{-a.s.}\;
	\end{equation}
\end{lemma}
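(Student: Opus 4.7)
The plan is to reformulate the claim as a quantitative lower bound on $\tau_k$ through the counting process $V_n := \sum_{j=1}^{n} Z_j$, which tracks the number of growth events up to time $n$. By the definition of the stopping times in \eqref{def:tau}, the events $\{\tau_k > n\}$ and $\{V_n < k\}$ coincide. Consequently, the desired conclusion $k^{2+\delta}\tau_k^{-\gamma}\to 0$ almost surely is equivalent to showing that $\tau_k \geq k^{a'}$ for all large $k$ almost surely, for some $a'$ with $a'\gamma > 2+\delta$.

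To this end I would first fix the exponents carefully. Since $\gamma > 2/3$, we have $3\gamma > 2$, so for any sufficiently small $\delta>0$ one has $\gamma(3+\delta) > 2+\delta$; equivalently, $a := (2+\delta)/\gamma$ satisfies $a(1-\gamma) < 1$. I would then enlarge $a$ slightly to some $a' > a$ for which $a'(1-\gamma)<1$ still holds. Setting $n_k := \lceil k^{a'}\rceil$, the hypothesis $p_j = \Theta(j^{-\gamma})$ gives
\[
\mu_k := E[V_{n_k}] \;=\; \sum_{j=1}^{n_k} p_j \;=\; \Theta\bigl(k^{a'(1-\gamma)}\bigr) \;=\; o(k),
\]
with the understanding that for $\gamma=1$ the $\Theta$-expression is replaced by $\Theta(\log k)$, but the $o(k)$ conclusion is unaffected.

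The remainder is a routine large-deviations step. Because $V_{n_k}$ is a sum of independent Bernoulli variables with mean $\mu_k = o(k)$, a standard Chernoff estimate yields $P(V_{n_k}\geq k) \leq (e\mu_k/k)^k$, which decays super-exponentially and is summable in $k$. Borel--Cantelli then ensures $\tau_k \geq n_k$ for all but finitely many $k$ almost surely. Consequently $\tau_k^{\gamma} \geq k^{a'\gamma} = k^{2+\delta'}$ with $\delta' := a'\gamma - 2 > \delta$, and therefore $k^{2+\delta}\tau_k^{-\gamma}\leq k^{\delta-\delta'}\to 0$ almost surely, as required.

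The main (and essentially the only) obstacle is the arithmetic of exponents: one must simultaneously secure $a'\gamma > 2+\delta$ (to obtain the desired lower bound on $\tau_k^\gamma$) and $a'(1-\gamma) < 1$ (so that Chernoff is effective against the threshold $k$). These two inequalities are compatible precisely when $\gamma > (2+\delta)/(3+\delta)$, which, upon letting $\delta\to 0^+$, is exactly the hypothesis $\gamma > 2/3$. This also explains why $2/3$ emerges as the natural threshold in the statement, matching the regime in Corollary~\ref{cor:recurrence} and Theorem~\ref{thm:powerlaw}.
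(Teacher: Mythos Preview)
Your proposal is correct and follows essentially the same route as the paper: translate the event $\{\tau_k\le n_k\}$ into $\{V_{n_k}\ge k\}$ for the counting process $V_n=\sum_{j\le n}Z_j$, apply a Chernoff bound using $E[V_{n_k}]=\Theta(n_k^{1-\gamma})=o(k)$, and conclude via Borel--Cantelli. The paper parametrizes the threshold as $n_k=k^{1/(1-\gamma)-\varepsilon}$ rather than your $n_k=k^{a'}$ with $a'(1-\gamma)<1$, but these are the same constraint, and the final step that $\gamma/(1-\gamma)>2$ iff $\gamma>2/3$ matches your compatibility analysis exactly.
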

For the rest of this proof fix $\delta>0$ such that \eqref{eq:tau-lemma} holds.
\begin{definition}[Good and bad time intervals]
With Lemma \ref{lemma:limtauk} in mind, we say that $\Delta \tau_k := \tau_k - \tau_{k-1}$ is \emph{good} if 
\begin{equation}\label{def:goodtau}
\Delta \tau_k \ge k^{2+\delta} +1\;,
\end{equation}
 and otherwise we say that it is \emph{bad}.
\end{definition}

To avoid clutter,   we will write $\widetilde{T}_k := T_{\tau_k}$, and $\widetilde{\vertexset{}}_k$ for the set of vertices of $\widetilde{T}_k$ and $\widetilde{\mathcal{F}}_k$ for $\mathcal{F}_{\tau_{k}}$. Notice that for any~$n \in (\tau_{k-1}, \tau_{k})$, $T_n$ and $\widetilde{T}_{k-1}$ has the same degree distribution. For this reason we focus our attention on the process $\{\widetilde{T}_k\}_k$.

Let us now look at the  evolution of the random graph model   $\{\widetilde{T}_k\}_k$ from a slightly different, although equivalent, perspective.   At any ``time'' step $k$, a new vertex $v_k$ and two half-edges,  $h_{k,1}$ and $h_{k,2}$,  are added to $\widetilde{T}_{k-1}$. The two half-edges account for the edge which connects the vertex $v_k$ to the tree $\widetilde{T}_{k-1}$. While the half-edge $h_{k,1}$  is always incident to the new vertex $v_k$, the half-edge $h_{k,2}$ will be attached  to the vertex in $\widetilde{\vertexset{}}_{k-1}$ where the random walk resides at time $\tau_k-1$, thus determining to which of the existing vertices the new vertex $v_k$ will be connected to in $\widetilde{T}_k$.

Leveraging on the above-mentioned   equivalent perspective of the evolution of $\{\widetilde{T}_k\}_k$,  we are going to introduce  a color-assignment process for  the half-edges which will be instrumental for proving the result. 
Specifically, at any time $k$, we color the two half-edges $h_{k,1}$ and $h_{k,2}$ in either \emph{blue} or \emph{red} according to the following rule: Let $B_{k-1}$ and $R_{k-1}$ denote the total number of half-edges blue and red  in $\widetilde{T}_{k-1}$, respectively (w.l.o.g, we assume $B_1=2, R_1=0$, which means that $T_0$ is a single edge whose half-edges are both blue). Then, for $k\ge 2$,
\begin{itemize}
	\item if $\Delta \tau_{k}$ is \textit{bad}, we color both $h_{k,1}$ and $h_{k,2}$ \textit{red};
	\item if $\Delta \tau_{k}$  is \textit{good}, we color $h_{k,1}$ \textit{blue}, while we flip a biased coin to decide the color of  $h_{k,2}$. The probability that $h_{k,2}$ is \textit{blue} is $B_{k-1}/2(k-1)$,  and 
	\textit{red} otherwise.
\end{itemize}
%
Note that $B_{k-1}/2(k-1)$ is the ratio of blue half-edges in $\widetilde{T}_{k-1}.$
Note also that the color assignment depends on the evolution of $\{\widetilde{T}_k\}_k$ through the conditions $\Delta\tau_{k}$ being bad or good, which in turns depends on $\gamma$ (see, Equation~\eqref{def:goodtau}), but it does not depend on the position of the random walk at time $\tau_k-1$.

As we shall soon show, in order to prove Theorem~\ref{thm:powerlaw}, it suffices to show that the empirical \emph{blue} degree distribution of $\widetilde{T}_k$  converges to~$4/d(d+1)(d+2)$. 
The first ingredient is to  show that the total number of \emph{red} half-edges is small compared to the total number of \emph{blue} half-edges. This is addressed in the next lemma, whose proof is deferred to the end of this section. 
\begin{lemma}[Not many red edges]\label{lemma:Rk} There exist $\varepsilon,\delta'>0$ depending on $\gamma$ only, such that for $k$ sufficiently large
\begin{equation*}
	\Pd_{T_0,x;\mathcal{L}} \left( R_k > k^{1-\varepsilon} \right) \le \exp\left\{-k^{\delta'}\right\}.
	\end{equation*}
\end{lemma}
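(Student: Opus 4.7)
The plan is to split the red half-edges into two contributions: the \emph{direct} reds produced by bad intervals (two per bad time) and the \emph{feedback} reds produced in good intervals via the Polya-urn-like rule where $h_{k,2}$ is red with probability $R_{k-1}/(2(k-1))$. Both contributions should be bounded by $O(k^{1-\varepsilon})$ with exponentially small failure probability, the key ingredient being an exponential strengthening of Lemma \ref{lemma:limtauk}.

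The first step is to upgrade Lemma \ref{lemma:limtauk} to an exponential tail estimate. Since $\tau_j$ is the $j$-th success time of the independent Bernoullis $\{Z_n\}_{n\ge 1}$ with $\Ed\!\left[\sum_{n=1}^{T} Z_n\right] = \Theta(T^{1-\gamma})$ for $\gamma \in (2/3,1)$ (and $\Theta(\log T)$ for $\gamma = 1$), a Chernoff bound applied to $\sum_{n=1}^{T} Z_n$ with $T = c\, j^{1/(1-\gamma)}$ chosen so that the mean is at most $j/2$ yields $\Pd(\tau_j \le c\, j^{1/(1-\gamma)}) \le e^{-c_1 j}$. A union bound over $j\le k$ isolates an event $\mathcal{E}_k$ with $\Pd(\mathcal{E}_k^c)\le k\,e^{-c_1 k}$ on which $\tau_j\ge c\,j^{1/(1-\gamma)}$ for every $j\le k$. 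Since $p_n$ is decreasing, on $\mathcal{E}_{j-1}$ one has
\[
\Pd(\mathrm{bad}_j \mid \widetilde{\mathcal{F}}_{j-1}) \le (j^{2+\delta}+1)\, p_{\tau_{j-1}+1} \le C\, j^{-\alpha},
\]
with $\alpha := \gamma/(1-\gamma) - (2+\delta) > 0$, positive because Lemma \ref{lemma:limtauk} forces $\delta$ small. Azuma's inequality applied to the martingale $\sum_{j\le k}\bigl(\mathbb{1}_{\mathrm{bad}_j} - \Pd(\mathrm{bad}_j\mid \widetilde{\mathcal{F}}_{j-1})\bigr)$, whose increments are bounded by $1$, yields $N_k^{\mathrm{bad}} \le k^{1-\alpha/2}$ outside an event of probability at most $e^{-c_2 k^{1-\alpha}}$.

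The second step is to bound $R_k$ itself. From the color-assignment rule,
\[
\Ed[R_k - R_{k-1}\mid \widetilde{\mathcal{F}}_{k-1}] \le 2\,\Pd(\mathrm{bad}_k\mid \widetilde{\mathcal{F}}_{k-1}) + \frac{R_{k-1}}{2(k-1)}.
\]
Iterating this recursion for $r_k := \Ed[R_k]$ produces $r_k \le C\sqrt{k}\sum_{j\le k}\Pd(\mathrm{bad}_j)/\sqrt{j} \le C\, k^{1-\min(\alpha,1/2)}$. Because $R_k - R_{k-1}$ is bounded by $2$, Azuma applied to the martingale $R_k - \Ed[R_k\mid\widetilde{\mathcal{F}}_0]$ gives $\Pd\bigl(R_k > r_k + k^{1-\varepsilon}\bigr) \le e^{-k^{1-2\varepsilon}/8}$. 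Choosing $\varepsilon < \min(\alpha/2,\,1/2)$ small enough, both $r_k$ and the fluctuation $k^{1-\varepsilon}$ are of order $k^{1-\varepsilon'}$ for some $\varepsilon'\in(0,\varepsilon)$, so that $R_k \le k^{1-\varepsilon'}$ outside an event of probability at most $e^{-k^{\delta'}}$ for some $\delta'>0$, as required.

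The main obstacle I anticipate lies in the tuning of exponents: as $\gamma\downarrow 2/3$ the quantity $\alpha=\gamma/(1-\gamma)-(2+\delta)$ shrinks to zero, so $\delta$ must be chosen as in Lemma \ref{lemma:limtauk} and $\varepsilon$ must in turn be taken sufficiently small. A more delicate technical point is that a single bad injection at time $j$ generates, in expectation, $\Theta(\sqrt{k/j})$ descendants in the Polya urn by time $k$; the recursive bound on $r_k$ absorbs this implicitly, but a conceptually cleaner alternative would be to condition on the random set of bad times and then apply standard urn concentration estimates to the resulting dynamics.
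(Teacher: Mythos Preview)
Your ingredients are largely the right ones (Chernoff for $\tau_j$, the conditional bound $\Pd(\mathrm{bad}_j\mid\widetilde{\mathcal{F}}_{j-1})\le Cj^{-\alpha}$, and Azuma--Hoeffding), and the paper uses the very same recursion $\Ed[\Delta R_k\mid\widetilde{\mathcal{F}}_{k-1}]=\frac{R_{k-1}}{2(k-1)}+\mathsf{error}(k)$ that you write down. The gap is in your second step: the process $R_k-\Ed[R_k\mid\widetilde{\mathcal{F}}_0]=R_k-r_k$ is \emph{not} a martingale, so Azuma does not give $\Pd(R_k>r_k+k^{1-\varepsilon})\le e^{-k^{1-2\varepsilon}/8}$. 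The genuine Doob martingale is $M_k=R_k-A_k$ with compensator $A_k=\sum_{j\le k}\Ed[\Delta R_j\mid\widetilde{\mathcal{F}}_{j-1}]$, and this $A_k$ contains the random term $\sum_{j\le k}\frac{R_{j-1}}{2(j-1)}$, which is coupled to the very quantity you are trying to bound. Controlling $A_k$ thus requires a priori control of $R_j$ for all $j<k$, and your expectation bound on $r_k$ does not supply that. (If instead you meant the Doob martingale $j\mapsto\Ed[R_k\mid\widetilde{\mathcal{F}}_j]$ for fixed $k$, its increments are not bounded by $2$: a single extra red at time $j$ propagates through the urn to order $\sqrt{k/j}$.)

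The paper's fix is precisely the normalization you allude to but do not implement: set $\phi(k)=\prod_{j=1}^{k-1}(1+\tfrac{1}{2j})=\Theta(\sqrt{k})$ and work with $R'_k:=R_k/\phi(k)$. Then the self-referential piece cancels exactly, leaving $\Ed[\Delta R'_k\mid\widetilde{\mathcal{F}}_{k-1}]=\mathsf{error}(k)/\phi(k)$, so the Doob decomposition reads $R'_k=M_k+\sum_{m\le k}\mathsf{error}(m)/\phi(m)$ with a compensator that depends only on the bad-time probabilities (hence on $\tau_{m-1}$), not on $R$. Moreover the martingale increments are now $O(1/\phi(j))=O(j^{-1/2})$, so $\sum_j|\Delta M_j|^2=O(\log k)$ and Azuma gives a much sharper tail than your $e^{-k^{1-2\varepsilon}/8}$. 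The predictable sum is then handled exactly via your bounds on $\tau_j$ and $\Pd(\mathrm{bad}_j\mid\cdot)$, split at a threshold $k^{\delta'}$. In short, your outline becomes a proof once you divide by $\phi(k)$ before decomposing; without that step the feedback term in the compensator is not controlled.
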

Next, we introduce some notation which will be instrumental in proving Theorem~\ref{thm:powerlaw}. Let $b_k(v)$ (resp. $r_k(v)$) be the \textit{blue} (resp. \textit{red}) degree of a vertex~$v$ in $\widetilde{\vertexset{}}_k$, that is, $b_k(v)$ (resp.  $r_k(v)$) counts  the number of blue (resp. red) half-edges incident to $v$ in $\widetilde{T}_k$. Also, for a fixed $d\in \mathbb{N}\setminus \{0\}$, let~$B_k(d)$ denote the number of vertices in $\widetilde{T}_k$ whose blue degree is exactly $d$, i.e.,
\begin{equation*}
B_k(d) := \sum_{v\in \widetilde{\vertexset{}}_k}\mathbb{1}\{b_k(v) = d\}\;.
\end{equation*}
Moreover, we say that a vertex $v \in \widetilde{\vertexset{}}_k$ is \textit{blue} if all half-edges incident to it are \textit{blue}.

\medskip 
Our main argument will rely on  stochastic approximation techniques \cite{robbins1951stochastic, benaim1999dynamics} to deal with the error arising from the fact that 
the blue degree of a vertex does not evolve according to a ``pure'' linear preferential attachment scheme, i.e., the probability of a vertex increasing its blue degree by one  is not simply proportional to its blue degree, but there is an (additive) error which must be controlled. 
To keep the paper self-contained, we state below the specific stochastic approximation framework  we shall use, which is taken \textit{ad litteram} from \cite{dereich2014robust}. 

\begin{lemma}[Stoch. Approx.; Lemma~3.1 in  \cite{dereich2014robust}]\label{lem:approximation}
	Let $\{Q_n\}_{n\geq 0}$ be a non-negative stochastic process satisfying the following recursion:
	\[
	Q_{n} - Q_{n-1} = \frac{1}{n} \left( \Psi_{n-1} -Q_{n-1} \Phi_{n-1} \right) +  M_{n}- M_{n-1}\;,
	\]
	where, $\{\Psi_{n}\}_{n\geq 0}, \{\Phi_{n}\}_{n\geq 0}$ are almost-surely convergent processes with deterministic limits $\psi>0$ and $\phi>0$, respectively, and $\{M_{n}\}_{n\geq 0}$ is an almost-surely convergent process. Then, $\lim_{n \to \infty}Q_n= \psi/\phi$, almost surely.
\end{lemma}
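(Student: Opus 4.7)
The plan is a standard stochastic approximation argument: subtract the candidate limit, unroll the resulting linear recursion, and show that the three contributions (initial condition, vanishing drift, noise) all tend to zero almost surely. Set $L^{*}:=\psi/\phi$ and $U_{n}:=Q_{n}-L^{*}$; a direct rearrangement of the given recursion gives
\[
U_{n} = \left(1 - \frac{\Phi_{n-1}}{n}\right) U_{n-1} + \frac{\eta_{n}}{n} + (M_{n}-M_{n-1}), \qquad \eta_{n} := \Psi_{n-1} - L^{*}\Phi_{n-1},
\]
with $\eta_{n}\to 0$ almost surely. I will work on the full-probability event on which $\Psi_{n}\to\psi$, $\Phi_{n}\to\phi$, and $M_{n}\to M_{\infty}$, and on this event choose a (random but a.s.\ finite) index $n_{0}$ so that $\Phi_{k-1}/k<1$ for every $k\ge n_{0}+1$, ensuring that the products $\pi_{k,n}:=\prod_{j=k+1}^{n}(1-\Phi_{j-1}/j)$, with $\pi_{n,n}:=1$, are strictly positive.

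Unrolling yields
\[
U_{n} = \pi_{n_{0},n}\, U_{n_{0}} + \sum_{k=n_{0}+1}^{n}\pi_{k,n}\left(\frac{\eta_{k}}{k} + (M_{k}-M_{k-1})\right).
\]
A Ces\`aro estimate applied to $\sum_{j\le n}\Phi_{j-1}/j$, exploiting $\Phi_{j}\to\phi$, gives $\log\pi_{k,n} = -(\phi+o(1))\log(n/k)$ and hence, for any fixed $\varepsilon\in(0,\phi)$, a bound $\pi_{k,n}\le C(k/n)^{\phi-\varepsilon}$ valid for all sufficiently large $n\ge k$. With this control, the transient $\pi_{n_{0},n}U_{n_{0}}$ vanishes since $(n_{0}/n)^{\phi-\varepsilon}\to 0$, and the drift term $\sum \pi_{k,n}\eta_{k}/k$ is handled by a threshold-splitting argument: for any $\delta>0$ pick $N$ with $|\eta_{k}|<\delta$ for $k\ge N$; the head $\sum_{k=n_{0}+1}^{N-1}$ is bounded by a multiple of $\pi_{N,n}\to 0$, while the tail is at most $\delta\sum_{k=N}^{n}\pi_{k,n}/k$, which by the Riemann-sum comparison $\sum_{k=N}^{n}(k/n)^{\phi-\varepsilon}/k=\mathcal{O}(1)$ is at most $C\delta$. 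Letting $\delta\downarrow 0$ closes this piece.

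The hard step is the noise term $\Sigma_{n}:=\sum_{k=n_{0}+1}^{n}\pi_{k,n}(M_{k}-M_{k-1})$, since $\sum|M_{k}-M_{k-1}|$ may well be infinite and na\"ive absolute bounds fail. The strategy is Abel summation: set $A_{k}:=M_{\infty}-M_{k}\to 0$, and exploit the identity $\pi_{k+1,n}-\pi_{k,n} = \pi_{k+1,n}\Phi_{k}/(k+1)$ (which also shows that $k\mapsto\pi_{k,n}$ is monotone nondecreasing once $\Phi_{k}>0$) to rewrite
\[
\Sigma_{n} = A_{n_{0}}\pi_{n_{0}+1,n} - A_{n} + \sum_{k=n_{0}+1}^{n-1}A_{k}\,\pi_{k+1,n}\,\frac{\Phi_{k}}{k+1}.
\]
The first two terms vanish as $\pi_{n_{0}+1,n}\to 0$ and $A_{n}\to 0$; the sum is controlled by the same threshold-splitting device used for the drift, now against the weights $\pi_{k+1,n}\Phi_{k}/(k+1)\le C'(k/n)^{\phi-\varepsilon}/k$, yielding a bound of the form $|\Sigma_{n}|\le o(1)+C''\sup_{k\ge N}|A_{k}|$, and hence $\Sigma_{n}\to 0$. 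Combining the three estimates gives $U_{n}\to 0$, i.e.\ $Q_{n}\to\psi/\phi$ almost surely. The principal obstacle, as indicated, is this Abel-summation step: one must verify positivity and (eventual) monotonicity of the weights $\pi_{k,n}$ throughout the summation, and keep track of the fact that $n_{0}$ is random, which is precisely why it is essential to work on the full-probability event from a sufficiently large random starting index.
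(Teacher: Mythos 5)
The paper does not prove this lemma at all; it is explicitly imported \emph{ad litteram} from the reference, so there is no in-paper argument to compare your attempt against. That said, your blind proof is a correct and fairly complete argument for the cited statement, and it is the standard linearize--unroll--Abel route for stochastic-approximation recursions of Robbins--Monro type. You correctly reduce to $U_n=Q_n-\psi/\phi$, identify the weights $\pi_{k,n}=\prod_{j=k+1}^n(1-\Phi_{j-1}/j)$, obtain the key polynomial bound $\pi_{k,n}\le C(k/n)^{\phi-\varepsilon}$ from $\Phi_n\to\phi>0$, and then split the unrolled sum into transient, drift and noise contributions. The drift piece dies because $\eta_k\to 0$ combined with the Riemann-sum bound $\sum_{k\le n}\pi_{k,n}/k=\mathcal{O}(1)$; and the genuinely nontrivial noise piece is handled via Abel summation against $A_k=M_\infty-M_k\to0$, using exactly the identity $\pi_{k+1,n}-\pi_{k,n}=\pi_{k+1,n}\Phi_k/(k+1)$, which simultaneously delivers the positivity/monotonicity of the weights needed to carry the threshold-splitting through. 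Two minor remarks if you were to write this out in full: the constant $C$ and the starting index $n_0$ are random (chosen on the a.s.\ event where $\Phi_n$ is eventually close to $\phi$ and $\Phi_{k-1}/k<1$), so the bound on $\pi_{k,n}$ is only uniform for $n\ge k\ge n_0$ with $C$ fixed once $n_0$ is; and the non-negativity of $Q_n$ in the hypotheses is never used anywhere in your argument, so your proof in fact establishes a slightly more general version than the one the paper quotes.
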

Together with the stochastic approximation framework, we will need a lemma which formalizes the discussion made in the previous paragraph. Roughly speaking, it says that whenever $\Delta \tau_k$ is good, the position of $X_{\tau_k -1}$ is selected according to a mixture of a preferential attachment scheme and some random error. 

\begin{lemma}[Almost PA at good times]\label{lemma:xtaucond} There exists $\delta>0$ such that for every $k\in \mathbb{N}$ and $t>k^{2+\delta}$, 
    \begin{align*}
	&\Pd_{ T_{\tau_{k-1}}, X_{\tau_{k-1}}; \mathcal{L}^{(\tau_{k-1})}}\left( X_{\tau_1 -1 } = v\mid \tau_1 = t\right)\\
	&\qquad\qquad= \frac{\degree{\tau_{k-1}}{v}}{2(k-1)} +\mathsf{error}(v,k)\;, \text{for}\  v\in \widetilde{T}_{k-1}\;,
	\end{align*}
	where the term $\mathsf{error}(v,k)$ represents a $\mathcal{F}_{\tau_{k-1}}$-measurable  random variable bounded by $e^{-k^\delta}, \Pd_{T_0,x;\mathcal{L}}$-almost surely.
\end{lemma}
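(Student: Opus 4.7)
The plan is to observe that, under the shifted law $\Pd_{T_{\tau_{k-1}},X_{\tau_{k-1}};\mathcal{L}^{(\tau_{k-1})}}$, conditioning on $\{\tau_1 = t\}$ amounts to conditioning on $\{Z_1 = \cdots = Z_{t-1} = 0,\ Z_t = 1\}$ for the shifted Bernoulli sequence. Since $\{Z_n\}_{n \ge 1}$ is, by construction, independent of the uniform neighbor-selection randomness that drives the walker, this conditioning does not bias the law of the trajectory $(X_0,\ldots,X_{t-1})$; it merely forces the environment to remain equal to $\widetilde{T}_{k-1}$ throughout the interval $[0,t-1]$. Hence, conditionally on $\{\tau_1 = t\}$, the process $(X_0,\ldots,X_{t-1})$ is distributed as the first $t-1$ steps of a SSRW on the frozen tree $\widetilde{T}_{k-1}$, started at $X_{\tau_{k-1}}$.

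Since $\widetilde{T}_{k-1}$ inherits the self-loop at its root from $T_0$, this SSRW is aperiodic; being finite, irreducible, and reversible it admits the unique stationary distribution $\pi(v) = \degree{\tau_{k-1}}{v}/(2(k-1))$. The lemma then reduces to a mixing-time statement: after $t-1 \ge k^{2+\delta}-1$ SSRW steps the law of $X_{t-1}$ should be within $e^{-k^{\delta}}$ of $\pi$ in total variation. I would invoke the classical polynomial bound on the mixing time of SSRW on a finite aperiodic tree with $N$ vertices, $t_{\mathrm{mix}}(\varepsilon) \le C N^2 \log(N/\varepsilon)$ for an absolute constant $C$ (obtained, for instance, from the $O(N^2)$ bound on maximal hitting times in trees combined with the standard hitting-to-mixing comparison). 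Since $|\widetilde{T}_{k-1}| = \Theta(k)$, choosing $\varepsilon = e^{-k^{\delta}}$ requires of order $C k^2(k^{\delta}+\log k)$ steps, which is comfortably absorbed by $k^{2+\delta}$ for $k$ large, after possibly shrinking $\delta$ slightly (compatibly with the value picked in Lemma~\ref{lemma:limtauk}). This gives
\[
\bigl| \Pd_{T_{\tau_{k-1}},X_{\tau_{k-1}};\mathcal{L}^{(\tau_{k-1})}}(X_{\tau_1 - 1} = v \mid \tau_1 = t) - \pi(v) \bigr| \;\le\; e^{-k^{\delta}},
\]
and $\mathsf{error}(v,k)$, being a measurable function of the pair $(\widetilde{T}_{k-1}, X_{\tau_{k-1}})$, is automatically $\mathcal{F}_{\tau_{k-1}}$-measurable.

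The main obstacle is obtaining a mixing-time bound that is \emph{uniform} in the random shape of $\widetilde{T}_{k-1}$, not merely valid for a fixed deterministic tree. This is in fact automatic: the classical mixing-time estimate above depends only on the vertex count $N = |\widetilde{T}_{k-1}|$ and not on the combinatorial structure of the tree, while $N$ itself is a deterministic function of $k$. Consequently, no further concentration-type argument on the tree size is required to pass from the conditional statement to the uniform bound on $\mathsf{error}(v,k)$.
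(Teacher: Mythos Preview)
Your proposal is correct and follows essentially the same route as the paper: both arguments (i) use the independence of the $Z_n$'s from the walker's uniform-neighbor steps to conclude that, conditionally on $\{\tau_1=t\}$, the process $(X_0,\ldots,X_{t-1})$ is a SSRW on the frozen tree $\widetilde T_{k-1}$, and (ii) invoke the $O(k^2)$ mixing-time bound for SSRW on a $k$-vertex tree together with the submultiplicativity estimate $t_{\mathrm{mix}}(\varepsilon)\le \log(1/\varepsilon)\,t_{\mathrm{mix}}$ to reach total-variation distance $e^{-k^\delta}$ within $k^{2+\delta}$ steps. The paper cites the bound $t_{\mathrm{mix}}\le 2k^2$ from \cite{brightwellwinkler1990} directly, whereas you phrase it via hitting times, but the content is the same.
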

Again, we will defer the proof of the above lemma to the end of this section. Now we will show how Theorem \ref{thm:powerlaw} follows from the aforementioned lemmas.
\begin{proof} [Proof of Theorem \ref{thm:powerlaw}] We begin by noticing that, if $N_k(d)$ denotes the number of vertices of degree $d$ in $\widetilde{T}_k$ (ignoring the colors), then  for all~$d\in \mathbb{N}\setminus \{0\}$,
\begin{align}\label{first.easy}
    0\le B_k(d)-\sum_{v\in \widetilde{\vertexset{}}_k} \mathbb{1}\{b_k(v)=d, v\text{ is blue}\} \le R_k\;.
\end{align}
%
Note also that 
\begin{align}\label{second.easy}
    0\le \sum_{v\in \widetilde{\vertexset{}}_k}\mathbb{1}\{\degree{\widetilde{T}_k}{v} = d\}-\sum_{v\in \widetilde{\vertexset{}}_k} \mathbb{1}\{b_k(v)=d, v\text{ is blue}\}\le R_k\;.
\end{align}
By virtue of Lemma~\ref{lemma:Rk}, $\lim_k R_k/k=0,\ \Pd_{T_0,x;\mathcal{L}}$-a.s. Putting this together with \eqref{first.easy} and \eqref{second.easy}, we see that in order to prove  Theorem~\ref{thm:powerlaw}, it is enough to show the following claim.

\medskip 
\begin{claim}[Limit distribution for blue degrees]\label{Bk} For any  $d \in \mathbb{N}\setminus \{0\}$,
	\begin{equation*}
	\lim_{k \rightarrow \infty} \frac{B_k(d)}{k} = \frac{4}{d(d+1)(d+2)}\;, \quad  \Pd_{T_0,x; \mathcal{L}}\text{-a.s.}
	\end{equation*}
\end{claim}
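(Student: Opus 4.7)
The plan is to prove the claim by induction on $d$ via the stochastic approximation framework of Lemma~\ref{lem:approximation}. First I write a one-step recursion for $B_k(d)$ by conditioning on $\widetilde{\mathcal{F}}_{k-1}$ and splitting by whether $\Delta\tau_k$ is good or bad. On a bad step both half-edges are red, so $B_k(d)-B_{k-1}(d)=0$, and Lemma~\ref{lemma:limtauk} ensures that bad steps are rare enough to be absorbed into an $o(1)$ error. On a good step, $v_k$ is born with one blue half-edge $h_{k,1}$, the coin flip makes $h_{k,2}$ blue with conditional probability $B_{k-1}/(2(k-1))$ (which tends to $1$ almost surely by Lemma~\ref{lemma:Rk}), and independently the other endpoint $u$ of the new edge is the walker's position at time $\tau_k-1$. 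By Lemma~\ref{lemma:xtaucond}, this endpoint is drawn according to $\degree{\tau_{k-1}}{v}/(2(k-1))$ up to an additive $e^{-k^\delta}$ error. Using $\sum_{v:\,b_{k-1}(v)=d'}\degree{\tau_{k-1}}{v}=d'B_{k-1}(d')+O(R_{k-1})$ together with Lemma~\ref{lemma:Rk} to absorb the red correction, I can replace total degree by blue degree at the cost of another $o(1)$ term.

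Collecting the estimates yields
\begin{equation*}
\mathbb{E}\left[B_k(d) - B_{k-1}(d) \,\big|\, \widetilde{\mathcal{F}}_{k-1}\right] = \mathbb{1}\{d=1\} + \frac{(d-1)B_{k-1}(d-1) - dB_{k-1}(d)}{2(k-1)} + \varepsilon_k(d)\;,
\end{equation*}
where $\varepsilon_k(d)\to 0$ almost surely for every fixed $d\ge 1$. Setting $Q_k^{(d)}:=B_k(d)/k$ and using the identity $Q_k^{(d)}-Q_{k-1}^{(d)}=\tfrac{1}{k}\bigl(B_k(d)-B_{k-1}(d)-Q_{k-1}^{(d)}\bigr)$, the recursion rewrites in the form required by Lemma~\ref{lem:approximation}:
\begin{equation*}
Q_k^{(d)} - Q_{k-1}^{(d)} = \frac{1}{k}\left(\Psi_{k-1}^{(d)} - Q_{k-1}^{(d)}\Phi^{(d)}\right) + \bigl(M_k^{(d)} - M_{k-1}^{(d)}\bigr)\;,
\end{equation*}
with $\Phi^{(d)}=(d+2)/2$, $\Psi_{k-1}^{(d)}=\mathbb{1}\{d=1\}+\tfrac{d-1}{2}Q_{k-1}^{(d-1)}+\widetilde{\varepsilon}_k(d)$, and $M_k^{(d)}$ the martingale of centered one-step noise. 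Since the increment $\xi_k(d):=B_k(d)-B_{k-1}(d)$ is uniformly bounded, a standard $L^2$-estimate gives $\sum_k\mathbb{E}[(\xi_k(d)-\mathbb{E}[\xi_k(d)\mid\widetilde{\mathcal{F}}_{k-1}])^2]/k^2<\infty$, and hence $M_k^{(d)}$ converges almost surely.

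The induction then runs cleanly. For $d=1$, $\Psi_{k-1}^{(1)}\to 1$ and $\Phi^{(1)}=3/2$, so Lemma~\ref{lem:approximation} yields $B_k(1)/k\to 2/3=4/(1\cdot 2\cdot 3)$. For $d\ge 2$, the induction hypothesis $B_k(d-1)/k\to 4/((d-1)d(d+1))$ gives $\Psi_{k-1}^{(d)}\to 2/(d(d+1))$ while $\Phi^{(d)}=(d+2)/2$, producing $\lim_k B_k(d)/k=4/(d(d+1)(d+2))$ as desired. The main technical obstacle is verifying that the error terms lumped into $\varepsilon_k(d)$ and $\widetilde{\varepsilon}_k(d)$ are genuinely $o(1)$ almost surely: this requires simultaneously patching together the exponentially small PA error from Lemma~\ref{lemma:xtaucond}, the gap between blue and total degree controlled by $R_{k-1}\le k^{1-\varepsilon}$ from Lemma~\ref{lemma:Rk}, the (also $O(R_{k-1})$) contribution of blue-degree-zero vertices upgrading to blue degree one, and the suppression of bad time steps supplied by Lemma~\ref{lemma:limtauk}.
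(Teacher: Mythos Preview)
Your proposal is correct and follows essentially the same route as the paper's proof: a stochastic-approximation recursion for $Q_k(d)=B_k(d)/k$ obtained by conditioning on $\widetilde{\mathcal{F}}_{k-1}$, with the good/bad split, Lemma~\ref{lemma:xtaucond} for the walker's position, the $O(R_{k-1})$ correction from Lemma~\ref{lemma:Rk} to pass from total to blue degree, an $L^2$-bounded martingale, and induction on $d$ via Lemma~\ref{lem:approximation}. The only cosmetic difference is that the paper keeps $\Phi_{k-1}(d)=1+\tfrac{d}{2}W_{k-1}$ as a process (with $W_{k-1}=\Pd(\Delta\tau_k\text{ good}\mid\widetilde{\mathcal{F}}_{k-1})\to 1$), whereas you take $\Phi^{(d)}=(d+2)/2$ constant and push the $(1-W_{k-1})$-factor into your error term~$\widetilde\varepsilon_k(d)$; both are legitimate since $Q_{k-1}^{(d)}\le 1$ deterministically.
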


%


\begin{claimproof} 
We want to show that the evolution of the blue degrees behaves much like a preferential attachment scheme. This is not a-priori clear because mixing on a long ``good'' time interval only yields\footnote{Since the stationary distribution on a fixed graph is proportional to the degrees.} that the full degrees (including the red degrees) of the vertices behave like that scheme. However, we are going  to show that the value of
\begin{equation*}
	\Pd_{T_0,x;\mathcal{L}}\left( \Delta b_k(v) = 1 \mid\Delta \tau_k \text{ is good}, \widetilde{\mathcal{F}}_{k-1}\right) \;,
	\end{equation*}
	is ``close'' to 
$\frac{b_{k-1}(v)}{2(k-1)}$, where here and in the sequel, $v\in \widetilde{\mathcal{V}}_{k-1}$. This means that $v$ receives the new blue half edge (if there is one) with a probability that is roughly proportional to its existing blue degree.

In order to achieve this, by Lemma \ref{lemma:xtaucond} and recalling that $\Delta \tau_k$ is good if $\Delta \tau_k \ge k^{2+\delta}+1$, we have that
	\begin{align}\label{eq:xvdeg}
	\Pd_{T_0,x;\mathcal{L}}\left( X_{\tau_{k} - 1}  = v  \mid \Delta \tau_k \text{ is good}, \mathcal{F}_{\tau_{k-1}}\right) =\frac{\degree{\tau_{k-1}}{v}}{2(k-1)} + \mathsf{error}(v,k)\;, \; \Pd_{T_0,x;\mathcal{L}}\text{-a.s.} 
	\end{align}
	Where $\mathsf{error}(v,k)$ represents a $\mathcal{F}_{\tau_{k-1}}$-measurable random variable bounded by $e^{-k^\delta}, \Pd_{T_0,x;\mathcal{L}}$-almost surely. %
	From the definition of the color-assignment process, we have that 
	$$
	\Pd_{T_0,x;\mathcal{L}}\left( \Delta b_k(v) = 1 \mid  X_{\tau_{k} - 1}  = v,\Delta \tau_k \text{ is good}, \widetilde{\mathcal{F}}_{k-1}\right) = \frac{B_{k-1}}{2(k-1)}\;,$$ 
	which, combined with  \eqref{eq:xvdeg}, and using that  $\degree{\tau_{k-1}}{v}=b_{k-1}(v) + r_{k-1}(v)$  and $B_k +R_k = 2k$, leads to
	\begin{equation}\label{eq:prob_increments}
	\begin{split}
	&\Pd_{T_0,x;\mathcal{L}}\left( \Delta b_k(v) = 1 \mid \Delta \tau_k \text{ is good}, \mathcal{F}_{\tau_{k-1}}\right)=
	\\
	&\qquad \frac{b_{k-1}(v)}{2(k-1)} + \underbrace{ \frac{B_{k-1}r_{k-1}(v) - b_{k-1}(v)R_{k-1}}{(2(k-1))^2} \pm \frac{B_{k-1}}{2(k-1)}\mathsf{error}(v,k)}_{:=F_{k-1}(v)}\;,
	\end{split}
	\end{equation}
	where $F_{k-1}(v)$ is a $\widetilde{\mathcal{F}}_{k-1}$-measurable random variable satisfying that
	\begin{equation}\label{eq:Forder}
	F_{k-1}(v) = \mathcal{O}\left( \frac{B_{k-1}r_{k-1}(v)-b_{k-1}(v)R_{k-1}}{(k-1)^2}\right)\;.
	\end{equation}
	Here $\mathcal{O}$ refers to a limit as $k\to\infty$. Note that although the collection of $v$'s to which this applies, keeps changing, the term $\mathsf{error}(v,k)$  is uniformly bounded in $v$.
	Denote by $\mathcal{B}_k(d)$ the subset 
	of vertices in $\widetilde{\vertexset{}}_k$ whose blue degree is exactly $d$;
   note that  $|\mathcal{B}_k(d)|={B}_k(d)$. Clearly,
   
	\begin{align*}
	\Delta B_k(d)=\begin{cases}
	\mathbb{1}\{\Delta \tau_{k} \text{ is good} \} + {\!\!\displaystyle\sum_{v \in \mathcal{B}_{k-1}(0)} \!\!\mathbb{1}\left\{ \Delta b_{k}(v)=1\right \}} - \!\!\displaystyle\sum_{v \in \mathcal{B}_{k-1}(1)} \!\!\mathbb{1}\left\{ \Delta b_{k}(v)=1\right \}  \;, & d=1\;,
	\\[7pt]
	\!\displaystyle\sum_{v \in \mathcal{B}_{k-1}(d-1)} \!\!\mathbb{1}\left\{ \Delta b_{k}(v)=1\right \}-\sum_{v \in \mathcal{B}_{k-1}(d)} \!\!\mathbb{1}\left\{ \Delta b_{k}(v)=1\right \}\;, & d>1\;.
	\end{cases}
	\end{align*}
     To avoid clutter, henceforth we set  
	$$
	W_{k-1}:= \Pd_{T_0,x;\mathcal{L}}\left(\Delta \tau_k \text{ is good}\mid  \mathcal{F}_{\tau_{k-1}}\right).
	$$ 
	Notice that for $v \in \mathcal{B}_{k-1}(0)$ Equation \eqref{eq:prob_increments} gives us that 
	$$
	    \Pd_{T_0,x;\mathcal{L}}\left( \Delta b_k(v) = 1 \mid \Delta \tau_k \text{ is good}, \mathcal{F}_{\tau_{k-1}}\right) = F_{k-1}(v)\;,
	$$
	which together with the fact vertices can increase their blue degree only at good times, $B_k \le 2k$ and  \eqref{eq:Forder} implies that 
	\begin{equation*}\label{eq:Fbk0}
	\Ed_{T_0,x;\mathcal{L}}\left[\sum_{v \in \mathcal{B}_{k-1}(0)} \mathbb{1}\left\{ \Delta b_{k}(v)=1\right \} \middle | \widetilde{\mathcal{F}}_{k-1}\right] = W_{k-1}\sum_{v \in \mathcal{B}_{k-1}(0)}F_{k-1}(v)  =  W_{k-1}\mathcal{O}\left(\frac{R_{k-1}}{k-1}\right).
	\end{equation*}
	Then,  using the above identity and \eqref{eq:prob_increments} we obtain 
	\begin{equation}\label{eq:average_increments}
	\begin{split}
	\Ed_{T_0,x;\mathcal{L}}\left[\Delta B_k(d) \mid \widetilde{\mathcal{F}}_{k-1}\right] &=
	W_{k-1}\, \frac{(d-1)B_{k-1}(d-1)- d\, B_{k-1}(d)}{2(k-1)}\\
	& \quad + W_{k-1}\left(Y_{k-1}(d-1) 
	- Y_{k-1}(d) \right)\;,
	\end{split}
	\end{equation}
	where, $Y_{k-1}(0)\equiv1$ and  for every $d\geq 1$,
	\begin{align}\label{eq:Yd}
	Y_{k-1}(d):=\begin{cases}
	\sum_{v \in \mathcal{B}_{k-1}(d)} F_{k-1}(v) - \sum_{v \in \mathcal{B}_{k-1}(0)} F_{k-1}(v)\;, & d =1\;, \\
	    \sum_{v \in \mathcal{B}_{k-1}(d)} F_{k-1}(v)\;, & d>1\;.
	    \end{cases}
	\end{align}
	Notice that for all $d\ge 1$, by \eqref{eq:Forder} we have that 
	$$
	    Y_{k-1}(d) = \mathcal{O}\left( \frac{R_{k-1}}{k-1}\right).
	$$
In order to simplify our notation, we now set $Q_{k}(d):= B_k(d)/k$. Then, writing 
	$$
	Q_k(d) - Q_{k-1}(d) = Q_k(d) - \Ed_{T_0,x; \mathcal{L}}\left[Q_k(d) \mid \widetilde{\mathcal{F}}_{k-1}\right] + \Ed_{T_0,x; \mathcal{L}}\left[Q_k(d) \mid  \widetilde{\mathcal{F}}_{k-1}\right] - Q_{k-1}(d)\;,
	$$
	and applying  \eqref{eq:average_increments}, we obtain the following recursion for~$Q_k(d)$:
	\begin{align}\label{eq:decomposition_lemma}
	Q_{k}(d) - Q_{k-1}(d) = \frac{1}{k} \left( \Psi_{k-1}(d-1) -Q_{k-1}(d) \Phi_{k-1}(d) \right) + \Delta M_{k-1}(d)\;,
	\end{align}
	where, for every $d\geq 1$, we have that 
	\begin{itemize}
		\item $\Psi_{k-1}(d-1)=\Big(\frac{d-1}{2}Q_{k-1}(d-1) + Y_{k-1}(d-1) - Y_{k-1}(d)\Big)
		W_{k-1}$;
		\item $\Phi_{k-1}(d)= 1 + \frac{d}{2}W_{k-1} = 1 + \frac{d}{2}\Pd_{T_0,x;\mathcal{L}}\left(\Delta \tau_k \text{ is good}\mid  \widetilde{\mathcal{F}}_{k-1}\right)$;
		\item $\Delta M_{k-1}(d) = Q_{k}(d) -  \Ed_{T_0,x; \mathcal{L}}\left[ Q_{k}(d) \mid  \widetilde{\mathcal{F}}_{k-1}\right]$.
	\end{itemize}
	In light of Lemma \ref{lem:approximation}, our goal is to guarantee that both of the processes~$\{\Psi_{n}(d-1)\}_{n\ge 0}$ and $\{\Phi_{n}(d)\}_{n\ge 0}$ have positive and deterministic limits and $\{M_n(d)\}_{n\geq 0}$ is convergent, for all fixed $d$.
	 
	The convergence of $\{\Phi_{n}(d)\}_{n\ge0}$ to $1+d/2$ (i.e. that $W_{k-1} \to 1$) comes as a consequence of Lemma~\ref{lemma:limtauk}. Indeed, observe that
	$$
	W_{k-1} = \Pd_{T_0,x;\mathcal{L}}\left(\Delta \tau_k \ge k^{2+\delta}+1\mid  \mathcal{F}_{\tau_{k-1}}\right) = \prod_{s=\tau_{k-1}+1}^{\tau_{k-1}+k^{2+\delta}+2}(1-p_s) \to 1\;,
	$$
	$\Pd_{T_0,x;\mathcal{L}}$-a.s. as $k$ goes to infinity, since $p_s = \Theta(s^{-\gamma})$ and by Lemma~\ref{lemma:limtauk}, $k^{2+\delta}/\tau_k^{\gamma}$ goes to zero almost surely.
	
	\medskip 
	
	\noindent \underline{Convergence of $\{M_n(d)\}_{n\geq 0}$:} For a fixed $d$, the terms $\Delta M_{k}(d)$ define a martingale difference sequence. Setting~$M_0(d)= 0$, let the process $\{M_n(d)\}_{n\geq 0}$ be the corresponding martingale. Diving both sides of  \eqref{eq:average_increments} yields to
	$$
	    \Ed_{T_0,x; \mathcal{L}}\left[ Q_{k}(d) \mid  \widetilde{\mathcal{F}}_{k-1}\right] = \frac{B_{k-1}(d)}{k} + \mathcal{O}\left(\frac{B_{k-1}(d) + B_{k-1}(d-1)+ R_{k-1}}{k^2 }\right),
	$$
	which implies that
	\begin{equation*}
	\begin{split}
	|\Delta M_{k-1}(d) | = \frac{\Delta B_k(d)}{k}+\mathcal{O}\left(\frac{B_{k-1}(d) + B_{k-1}(d-1)+ R_{k-1}}{k^2 }\right)\;.
	\end{split}
	\end{equation*}
	On the other hand, since $\Delta B_k(d) \le 2$ and $B_k(d)$ and $R_k$ are less than~$2k$ for all $d$, it follows that
	\begin{align*}
    (\Delta M_k(d))^2 = 
	\mathcal{O}\left(\frac{1}{k^2}\right),
	\end{align*}
	which is sufficient to guarantee that $\{M_n(d)\}_{n\geq 0}$ is bounded in $L^2$ and thus converges $\Pd_{T_0,x;\mathcal{L}}$-almost surely. \\
	%

	\noindent \underline{Convergence of $\{\Psi_{k}(d-1)\}_{k\ge 0}$:} This part requires induction on $d$, since $\Psi_{k}(d-1)$ is defined via $Q_{k}(d-1)$. Thus, our first step is to prove it for $d=1$.
	
	Observe that as a consequence of Lemma~\ref{lemma:Rk} and  \eqref{eq:Yd}, we have that $\{Y_n(d)\}_n$ converges to zero for $d\ge 1$. Moreover, when $d=1$, Lemma~\ref{lemma:limtauk} implies that 
	\begin{align*}
	\lim_{k \rightarrow \infty}\Psi_{k-1}(0)&=    \lim_{k \rightarrow \infty}(1 -Y_{k-1}(1)) \Pd_{T_0,x;\mathcal{L}}\left(\Delta \tau_k \text{ is good}\mid   \widetilde{\mathcal{F}}_{k-1}\right) = 1\;.
	\end{align*}
	Combining the above with our previous results and Lemma~\ref{lem:approximation} gives us that $\{Q_n(1)\}_{n\ge 0}$ converges to $\frac{1}{1+1/2}=2/3$.
	Now let $d\ge 2$. Using induction we show that $\lim_{k\to \infty}Q_{k}(d)$ exists a.s., and calculate the limit recursively. So let us assume  that $\lim_{k\to \infty}Q_k(d-1)=:Q(d-1)$ exists almost surely.  It follows that $$\lim_{k\to \infty}\Psi_k(d-1)=(d-1)Q(d-1)/2,\ \text{a.s.}$$  Using  Lemma~\ref{lem:approximation} again,  we conclude  that 
	\[
	Q(d):=\lim_{k\to \infty}Q_{k}(d) = \frac{\frac{d-1}{2}\cdot Q(d-1)}{(1+ d/2)} =\frac{d-1}{d+2}\cdot Q(d-1) \;, \quad \text{ $\Pd_{T_0,x;\mathcal{L}}$-a.s.}
	\]
	which implies that for $d\geq 1$,  $$\lim_{k\to \infty}Q_{k}(d)=\frac{4}{d(d+1)(d+2)},\quad\mathbb{P}_{T_0,x; \mathcal{L}}\text{-a.s.}$$ and proves Claim \ref{Bk}.
\end{claimproof}

\medskip
Returning now to the proof of Theorem~\ref{thm:powerlaw}, recall that, as observed at the beginning of our argument, the statement follows from  Claim \ref{Bk}, and we are done.
\end{proof}

\medskip 
Finally, having shown how Theorem \ref{thm:powerlaw} follows from our lemmas, we still owe the proofs of Lemmas \ref{lemma:limtauk}, \ref{lemma:Rk} and \ref{lemma:xtaucond}, and this is what we are going to do below.
\begin{proof}[Proof of Lemma~\ref{lemma:limtauk}]
Let us begin by noticing that the claim is true for $\gamma>1$, since in this regime one has $\tau_k=+\infty$, for all sufficiently large $k$, $\Pd_{T_0,x;\mathcal{L}}$-a.s. In the sequel, we therefore address the case $\gamma \in (2/3,1]$.   Let $V_m=|\vertexset{m}|$ and note that  $V_m := Z_1 +\dots +Z_m$, which implies that, as $m\to\infty$, \[
\Ed_{T_0, o;\mathcal{L}}(V_m) = \Theta (m^{1-\gamma})\;.
\]
Since $\tau_{k}$ is the first time of having $k+1$ vertices, if $T_0=\{o\}$, one has the identity
\begin{equation*}
\Lbrace \tau_{k} \le k^{1 /(1-\gamma) 
-\varepsilon
} \Rbrace = \Lbrace V_{k^{1 /(1-\gamma) 
-\varepsilon
}} \ge k\Rbrace\;,
\end{equation*}
 for any $\varepsilon < 1/(1-\gamma)$.
Recalling  the independence of the $Z_n$'s, using that $\log(1+x)<x,\ x>0$,  and bounding the sum by an integral, it follows that for every $\lambda>0$,
\begin{equation*}
\log\Ed_{T_0,x;\mathcal{L}} \left( e^{\lambda V_m}\right) \le \begin{cases}
 (e^{\lambda}-1)m^{1-\gamma}/(1-\gamma)\;, & \gamma <1\;,
\\
(e^{\lambda}-1)(\log m + 1) \;, & \gamma =1\;. 
\end{cases}
\end{equation*}
Thus, by the exponential Markov inequality  there is a positive constant $C$ depending on~$\gamma$ only, such that
\begin{equation}\label{two.exp}
\begin{split}
\Pd_{T_0,x;\mathcal{L}} \left( V_{k^{ 1/(1-\gamma) -\varepsilon}} \ge k \right)\le e^{-k}e^{Ck^{1-(1-\gamma)\varepsilon}}.
\end{split}
\end{equation}
This and the Borel-Cantelli lemma yields that $\tau_{k}<k^{1 /(1-\gamma) -\varepsilon}$ occurs only finitely many times, almost surely. We conclude the proof by choosing a small enough  $\varepsilon>0$ and noting that $\gamma > 2/3$ if and only if $\gamma/(1-\gamma) > 2$. 
\end{proof}
We now prove Lemma \ref{lemma:xtaucond}, while we defer the proof of Lemma \ref{lemma:Rk} to the end of this section.

\begin{proof}[Proof of Lemma \ref{lemma:xtaucond}]  
	%
	%
	%
	%
	Recall that $\{\mathcal{G}_n\}_{n\geq 1}$ denotes the natural filtration induced by the variables $\{Z_n\}_{n\geq 1}$. Observe that conditioned on $\mathcal{G}_{k-1}$, $\Delta \tau_k$ dominates a random variable $Y_k$ following geometric distribution of parameter ~$c/\tau_{k-1}^{\gamma}$, where $c$ is a positive constant depending on $(p_n)_n$ only.  Thus, with for $k$ sufficiently large, there exists a positive constant~$C$ such that, almost surely,
	\begin{equation}\label{eq:badtau}
	\begin{split}
	\Pd_{T_0,x;\mathcal{L}}\left( \Delta \tau_k \text{ is bad} \mid  \widetilde{\mathcal{F}}_{k-1} \right) & \le  \Pd_{T_0,x;\mathcal{L}}\left( Y_k < k^{2+\delta} +1 \mid  \widetilde{\mathcal{F}}_{k-1} \right)\le C\frac{k^{2+\delta}}{\tau_{k-1}^{\gamma}}\;.
	\end{split}
	\end{equation}
	Note that  Lemma \ref{lemma:limtauk} allows us to choose $\delta$ small enough so the RHS of the above inequality goes to zero almost surely when $k$ goes to infinity.
	
	Now, let $\Pi_k$ be the (random) stationary distribution of a simple random walk on the rooted tree $T_{\tau_k}$  with a self-loop attached to the root $o$ (we introduce a self-loop at the root just to avoid periodicity). More specifically, $\Pi_k$ is the random distribution over $\vertexset{\tau_k}$ which assigns to each $v\in \vertexset{\tau_k}$ the weight
	\begin{equation*}
	\Pi_k(v) := \frac{\degree{\tau_{k}}{v}}{2k}\;.
	\end{equation*}
	Let $P_{x, T}^t$ denote the distribution of a random walk over $T$ started at $x$ after $t$ steps. It is known (see \cite{brightwellwinkler1990}) that the mixing time of a random walk on a tree $T$ with $k$ vertices satisfies the  upper bound
	\begin{equation}\label{eq:boundtmix}
	\tmix \le  2k^2\;,
	\end{equation}
	uniformly in the initial state $x\in T$. Recall that $\rm d_{\rm TV}$  denotes the total variation distance. It is well known that for any $\varepsilon>0$ the  upper bound  
	\begin{equation*}
	\tmix(\varepsilon) \le \log\left(\frac{1}{\varepsilon}\right)\tmix \;,
	\end{equation*}
	holds, and
	in particular, for $\varepsilon := \exp\{-k^\delta\}$  by \eqref{eq:boundtmix}, we obtain  that  
	\begin{equation}\label{eq:dtvpt}
	\begin{split}
	\dtv{\Pi_{k-1}}{P^{t}_{X_{\tau_{k-1}}, T_{\tau_{k-1}}}} \le  e^{-k^\delta},\qquad \Pd_{T_0,x;\mathcal{L}}\text{-a.s.}
	\end{split}
	\end{equation}
	for all $t > k^{2+\delta}$. 
	Next, the strong Markov property applied to $\mathcal{L}$-\name at time $\tau_{k-1}$ yields that
	\begin{equation}\label{eq:markov}
	\Pd_{T_0,x;\mathcal{L}}\left( X_{\tau_{k} - 1} = v \mid \mathcal{F}_{\tau_{k-1}}\right) = \Pd_{X_{\tau_{k-1}}, T_{\tau_{k-1}}; \mathcal{L}^{(\tau_{k-1})}} \left( X_{\tau_1 -1} = v\right)\;.
	\end{equation} 

	Observe that under $\Pd_{X_{\tau_{k-1}}, T_{\tau_{k-1}}; \mathcal{L}^{(\tau_{k-1})}}$ conditioned on  $\tau_1 = t$, $X_{\tau_1 -1}$ has distribution $P^{t}_{X_{\tau_{k-1}}, T_{\tau_{k-1}}}$. The reader may check this fact recalling that the event $\tau_1 = t$ depends only on the random variables $\{Z_n\}_{n\geq 1}$ together with  the independence of $Z_n$ from $\mathcal{F}_{n-1}$, for all $n$. 
	
	Using  \eqref{eq:markov} and \eqref{eq:dtvpt} for $t>k^{2+\delta}$, we have that
	\begin{equation*}
	\Pd_{X_{\tau_{k-1}}, T_{\tau_{k-1}}; \mathcal{L}^{(\tau_{k-1})}}\left( X_{\tau_1 -1 } = v\mid \tau_1 = t\right) = \frac{\degree{\tau_{k-1}}{v}}{2(k-1)} \pm \mathsf{error}(v,k)\;,
	\end{equation*}
	where $\mathsf{error}(v,k)$ represents an $\mathcal{F}_{\tau_{k-1}}$-measurable  random variable for which 
	\[
	\Pd_{T_0,x;\mathcal{L}}\left(\mathsf{error}(v,k)\le e^{-k^\delta}\right)=1\;.
	\]
	This concludes the proof.
	\end{proof}

\begin{proof}[Proof of Lemma~\ref{lemma:Rk}] We know that  $\Delta R_k=2$  when $\Delta\tau_{k}$ is bad, while it equals $1$ when  $\Delta\tau_{k}$ is good and the half-edge $h_{k,2}$ is colored red. Therefore, 
	\begin{equation}\label{eq:expecdeltar}
	\begin{split}
	&\Ed_{T_0,x;\mathcal{L}} \left[\Delta R_k \mid
	\mathcal{F}_{\tau_{k-1}}
	\right]\\
	 &=  2\Pd_{T_0,x;\mathcal{L}} \left( \Delta\tau_{k}\text{ is bad}\mid 
	\mathcal{F}_{\tau_{k-1}}
	\right)+ \frac{R_{k-1}}{2(k-1)}\Pd_{T_0,x;\mathcal{L}} \left( \Delta\tau_{k}\text{ is good}\mid
	\mathcal{F}_{\tau_{k-1}}
	\right) \\
	& = \frac{R_{k-1}}{2(k-1)} + \mathsf{error}(k)\;,
	\end{split}
	\end{equation}
where, $\mathsf{error}(k)$ is an $\mathcal{F}_{\tau_{k-1}}$-measurable variable such that 
\begin{equation}\label{eq:bounderrobad}
 |\mathsf{error}(k)| \le 2\Pd_{T_0,x;\mathcal{L}} \left( \Delta\tau_{k}\text{ is bad}\mid
 \mathcal{F}_{\tau_{k-1}}
 \right), \; \Pd_{T_0,x;\mathcal{L}}\text{-a.s.}\;
\end{equation}
Now, define the function $\phi: \{2,3,,...\}\to (1,\infty)$ by 
\begin{equation*}
\phi (k):= \prod_{j=1}^{k-1}\left(1+\frac{1}{2j}\right) =2^{1-k}\frac{(2k-1)!!}{(k-1)!}= \Theta(\sqrt{k})\;,
\end{equation*}
where the last step uses Stirling's formula and that $(2k-1)!!=\frac{(2k)!}{2^{k}k!}$.
In fact, even $\phi (k)/\sqrt{k}\to \text{const}$, which can be computed.
Let 
$R'_k := \frac{R_k}{\phi(k)}$.
Using \eqref{eq:expecdeltar}, a simple computation shows that
\begin{equation*}
\Ed_{T_0,x;\mathcal{L}} \left[\Delta R'_k \mid
 \mathcal{F}_{\tau_{k-1}}
\right] = \frac{\mathsf{error}(k)}{\phi(k)}\;.
\end{equation*}
Next, exploiting Doob's decomposition, we may write $R'_k$ as
\begin{equation}\label{Doob.decomp}
R'_k = M_k + \sum_{m=1}^{k} \frac{\mathsf{error}(m)}{\phi(m)}\;,
\end{equation}
where the process $M$ is a mean zero martingale whose increments satisfy that
\begin{equation*}
|M_{j+1} - M_j | = \left | \frac{R_{j+1} -(1+\frac{1}{2j})R_j - \mathsf{error}(j+1)}{\phi(j+1)} \right | \le \frac{5}{\phi(j+1)}\;, \; \Pd_{T_0,x;\mathcal{L}}\text{-a.s.}
\end{equation*}
Thus, there exists a positive constant $C_1$ such that
\begin{equation}\label{the.bound}
\sum_{j=1}^k |M_{j+1} - M_j |^2 \le C_1\log k\;.
\end{equation}
Combining \eqref{the.bound} with the Azuma–Hoeffding inequality \cite{BBD2015Ineq} yields a positive constant $C_2$ depending on $\gamma$ and $C_1$ such that 
\begin{equation*}
\Pd_{T_0,x;\mathcal{L}} \left( M_k > C_2 k^{\delta'/2}\log^{1/2} k \right) \le e^{-k^{\delta'}}\;.
\end{equation*}
To control the predictable term in Doob's decomposition (i.e. the sum in \eqref{Doob.decomp}), we break the sum into two terms as $\sum_{m=1}^{k^{\delta'}}+\sum_{j=k^{\delta'}}^k$, by fixing some $\delta'$ to be chosen later. For the first sum we have the  deterministic bound
\begin{equation*}
\begin{split}
\sum_{m=1}^{k^{\delta'}}\frac{\mathsf{error}(m)}{\phi(m)} \le 2\sum_{m=1}^{k^{\delta'}} \frac{1}{\phi(m)} \le C_3k^{\delta'/2}\;,
\end{split}
\end{equation*}
for some universal constant $C_3$. For the rest of the terms, recall from the proof of Lemma~\ref{lemma:limtauk} (see \eqref{two.exp}) that for any $\varepsilon'<1/(1-\gamma)$ and for large enough $j$,
\begin{equation}\label{eq:boundtj}
\Pd_{T_0,x;\mathcal{L}} \left( \tau_{j} \le j^{1 /(1-\gamma) -\varepsilon'}\right) \le e^{-C_4j}\;.
\end{equation}
Thus, recalling  \eqref{eq:bounderrobad} and \eqref{eq:badtau}, on the event $\bigcap_{j=k^{\delta'}}^k \Lbrace \tau_{j} \ge j^{1 /(1-\gamma) -\varepsilon'} \Rbrace$ we have that
\begin{equation*}
\sum_{j=k^{\delta'}}^k \frac{\mathsf{error}(j)}{\phi(j)} \le \sum_{j=k^{\delta'}}^k C_5 j^{2+\delta - (\gamma/(1-\gamma) -\gamma\varepsilon') -1/2} \le C_6k^{1/2 -\varepsilon}\;,
\end{equation*}
where $\varepsilon>0$  depends on $\gamma$, which is fixed and on $\delta$ and $\varepsilon'$, which can be chosen properly. On the other hand, by the union bound and  \eqref{eq:boundtj} we obtain that 
$$
\Pd_{T_0,x;\mathcal{L}} \left( \bigcup_{j=k^{\delta'}}^k\Lbrace \tau_{j}
\le j^{1 /(1-\gamma) -\varepsilon'} \Rbrace\right) \le \exp\{-C_7k^{\delta'}\}\;.
$$
Since $R_k = R'_k\phi(k)$, it follows that, on the  event  $$\{ M_k \le C_2k^{\delta'/2}\log^{1/2} k\}\,\cap\,\bigcap_{j=k^{\delta'}}^k \Lbrace \tau_{j} \ge j^{1 /(1-\gamma) -\varepsilon'} \Rbrace\;,$$
 the estimate $R_k \le k^{1-\varepsilon}$ holds, which is enough to conclude the proof. 
\end{proof}

\section{Example: recurrence  without power-law degree distribution}
In order to illustrate that for the \name random walk recurrence is not equivalent to observing a scale-free tree sequence, we provide an example of a sequence $\{Z_n\}_{n\ge 1}$ under which the walker is recurrent but the corresponding tree sequence has a limiting degree distribution which is very different from the power-law. In fact, we prove that in this setup, the proportion of leaves converges to one almost surely.

To carry out this program, for a fixed $\delta >0$, consider the sequence of real numbers $(p_n^\delta)_{n \ge 1}$, with $p_n^{(\delta)} := {n^{-\delta}}$, 
and the laws 
\begin{equation}\label{def:zn}
L^{(\delta)}_n:= 
\begin{cases}
\log n,\text{ with probability }p_n^{(\delta)};\\
0,\text{ with probability }1-p_n^{(\delta)}.
\end{cases}
\end{equation}
Regarding the $\mathcal{L}^{(\delta)}$-\name process  we have the following result:
\begin{proposition}[Recurrence with leaves only]\label{prop:deltadist} For the $\mathcal{L}^{(\delta)}$-\name process satisfying \eqref{def:zn}, the random walk is recurrent for all $\delta \in (2/3,1]$. Furthermore, if $N_n(1)$  denotes the number of leaves and $|\mathcal{V}_n|$ the total number of vertices in $T_n$, then
$$
    \lim_{n\to \infty } \frac{N_n(1)}{|\mathcal{V}_n|} = 1\;, \quad  \mathbb{P}_{T_0,x;\mathcal{L}^{(\delta)}}\text{-a.s.}
$$
\end{proposition}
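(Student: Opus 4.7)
The plan is to split the statement into two essentially independent pieces: recurrence follows by verifying the hypotheses of Theorem~\ref{thm:general_recurrence}, while the leaf statement reduces to a deterministic counting bound combined with a law-of-large-numbers argument for sums of independent random variables.

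For the recurrence, I would check assumptions (A1)--(A3) of Theorem~\ref{thm:general_recurrence}. Since $m_n=(\log n)\cdot n^{-\delta}<\infty$ and $q_n=1-n^{-\delta}\nearrow 1$, (A1) and (A2) are immediate. For (A3), an integral comparison gives
\[
M_n=\sum_{k=1}^n \frac{\log k}{k^\delta}=\begin{cases}\Theta\bigl(n^{1-\delta}\log n\bigr), & \delta\in(2/3,1),\\ \Theta\bigl((\log n)^2\bigr), & \delta=1,\end{cases}
\]
so $(1-q_n)M_n^2$ is of order $n^{2-3\delta}(\log n)^2$ when $\delta<1$ and of order $n^{-1}(\log n)^4$ when $\delta=1$; both tend to zero exactly when $\delta>2/3$. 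Theorem~\ref{thm:general_recurrence} then delivers recurrence.

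For the leaf proportion, the key observation is that a vertex $v$ ever leaves the class of leaves only the first time the walker sits on $v$ during a growth step, since its degree only increases with time. Writing $G_n:=\sum_{k=1}^{n-1}\mathbb{1}\{Z_k\ge 1\}$ for the number of growth events up to time $n$, this yields the deterministic bound
\[
|\vertexset{n}|-N_n(1)\le |\vertexset{0}|+G_n.
\]
On the other hand, $|\vertexset{n}|-|\vertexset{0}|=\sum_{k=1}^{n-1}Z_k$ is a sum of independent variables with $\mathbb{E}|\vertexset{n}|=\Theta(M_n)$ and variance bounded by $\sum_k (\log k)^2 k^{-\delta}$, which is of strictly smaller order than $M_n^2$. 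Chebyshev along a sufficiently fast polynomial subsequence, interpolated by monotonicity, gives $|\vertexset{n}|\sim M_n$ almost surely, and the same argument applied to $G_n$ gives $G_n=\Theta(n^{1-\delta})$ (respectively $\Theta(\log n)$) almost surely. Combining,
\[
\frac{|\vertexset{n}|-N_n(1)}{|\vertexset{n}|}=\mathcal{O}\!\left(\frac{G_n}{M_n}\right)=\mathcal{O}\!\left(\frac{1}{\log n}\right)\longrightarrow 0,
\]
which is the desired almost-sure convergence.

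The only mildly delicate point is upgrading the Chebyshev estimate along a subsequence $(n_j)$ to an almost-sure statement for every $n$; this is routine because both $|\vertexset{n}|$ and $G_n$ are monotone nondecreasing while $\mathbb{E}|\vertexset{n_{j+1}}|/\mathbb{E}|\vertexset{n_j}|\to 1$ for any slowly-varying sequence of the form $n_j=j^\alpha$ with $\alpha>1/(1-\delta)$. No ingredient beyond Theorem~\ref{thm:general_recurrence} and elementary concentration should be required.
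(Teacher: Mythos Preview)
Your recurrence argument is identical to the paper's. For the leaf proportion you use the same key deterministic inequality as the paper (each growth event converts at most one leaf into a non-leaf, so the number of non-leaves is at most $|\mathcal V_0|$ plus the number of growth events), but your lower bound on $|\mathcal V_n|$ is different and unnecessarily heavy. The paper avoids all concentration estimates by working at the growth times $\tau_k$ and using only the trivial deterministic fact $\tau_k\ge k$: since $Z_{\tau_j}=\log\tau_j\ge\log j$, one has $\widetilde V_k:=|\mathcal V_{\tau_k}|\ge\sum_{j=1}^k\log j\ge k(\log k-1)+1$, whence
\[
\frac{\widetilde N_k}{\widetilde V_k}\ \ge\ 1-\frac{|\mathcal V_0|+k}{k(\log k-1)+1}\ \longrightarrow\ 1,
\]
and the ratio $N_n(1)/|\mathcal V_n|$ is constant between successive growth times. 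So no Chebyshev, no subsequence, no interpolation is needed. Your route works too, but note that at $\delta=1$ the polynomial subsequence $n_j=j^{\alpha}$ with $\alpha>1/(1-\delta)$ is vacuous; you would need something like $n_j=\exp(j^{\beta})$ with $\beta>1$ to make the Chebyshev bound summable in that endpoint case. The paper's purely deterministic argument sidesteps this entirely.
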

\underline{Intuition:} By \eqref{def:zn}, when the walker adds new leaves at time $n$, it adds an amount of $\log n$. Heuristically, Proposition~\ref{prop:deltadist} says that this quantity is not large enough to ``trap'' the walker in some neighborhood of a vertex, so the walk is still recurrent. Nonetheless, it is large enough to change the power-law degree distribution dramatically, producing trees whose empirical degree distribution converges to the degenerate distribution over $\mathbb{N}$ which puts all the mass at $1$.
\begin{proof}[Proof of Proposition \ref{prop:deltadist}]
We check recurrence first.

\underline{Recurrence of the walk.}  By Theorem \ref{thm:general_recurrence}, we need to show that $\{Z^{(\delta)}_n\}_n$ satisfies all the assumptions (A1)-(A3) of that theorem. Assumptions (A1) and (A2) trivially follow  from the definitions of $p_n^{(\delta)}$ and $Z^{(\delta)}_n$. For (A3), recalling that $\delta \in (2/3,1]$, use that
$
    E(Z^{(\delta)}_n) = \frac{\log n}{n^{\delta}},$
    yielding $$M_n = \sum_{k=1}^n \frac{\log k}{k^{\delta}} \le  \log n \sum_{k=1}^n \frac{1}{k^{\delta}}\le \log n\int_0^n x^{-\delta}\, dx=(1-\delta)^{-1}\log n \cdot n^{1-\delta} \;,
$$
hence $$(1-q_n)\cdot M_n^2\le {n^{-\delta}}\cdot(1-\delta)^{-2} n^{2-2\delta}(\log n)^2=
 (1-\delta)^{-2}n^{2-3\delta}(\log n)^2\to 0\;,$$ since $\delta>2/3.$ This proves recurrence. \\

Next we check the degree distribution.

\underline{Proportion of leaves.} Define the stopping times
\begin{equation}\label{eq:stopping.t}
     \tau_0\equiv 0;\qquad  \tau_k: = \inf \{ n > \tau_{k-1} \; : \; Z^{(\delta)}_n = \log n \}\;,\qquad k\ge 1\;.
\end{equation}
 Since $\delta \le 1$, it follows that all the $\tau_k$ are finite $P$-a.s. Introduce the shorthands 
$$
\widetilde{N}_k := N_{\tau_k}(1); \quad \widetilde{V}_k := |\mathcal{V}_{\tau_k}|\;.
$$
By \eqref{eq:stopping.t}, we have that
$
\widetilde{V}_k = |\mathcal{V}_0| + \sum_{j=1}^k \log(\tau_k)
$ where $|\mathcal{V}_0|$ is size of the initial tree $T_0$. Moreover, observe that whenever a certain amount of new leaves is added, even in the ``worst case scenario'' when new vertices are added to a leaf, only one leaf has to be subtracted from the total number of leaves. This implies that 
\begin{equation*}
    \widetilde{N}_k \ge \sum_{j=1}^k\log(\tau_j) -k = \widetilde{V}_k - |\mathcal{V}_0| - k\;,  \quad
    \mathbb{P}_{T_0,x;\mathcal{L}^{(\delta)}}\text{-a.s.}
\end{equation*}
Using that $\tau_k \ge k$ almost surely, we have that 
\[
\widetilde{V}_k \ge \sum_{j=1}^k \log j \ge \int_1^k \log x\, dx= k(\log k-1)+1\;,
\]
yielding that
\begin{align*}
 &\frac{\widetilde{N}_k}{\widetilde{V}_k} \ge 1 - \frac{ (|\mathcal{V}_0| + k)}{\widetilde{V}_k}\ge 1- \frac{ (|\mathcal{V}_0| + k)}{k(\log k-1)+1}\\
 &\implies \liminf_{k \to \infty} \frac{\widetilde{N}_k}{\widetilde{V}_k} \ge 1\;,
 \quad
 \mathbb{P}_{T_0,x;\mathcal{L}^{(\delta)}}\text{-a.s.}\;
\end{align*}
To conclude the proof use that $\widetilde{N}_k \le \widetilde{V}_k$ along with the obvious fact that the sequence $\{N_n(1)/|\mathcal{V}_n|\}_n$ only changes its values at the stopping times $\tau_k$'s.
\end{proof}
\section{Transience when many edges are grown}\label{tr.many}
The purpose of this section is to demonstrate that when there are infinitely many growth times (i.e. $p_n$ is not summable),
and there are sufficiently many edges  grown at those times, the walk is never recurrent, and under a mild condition on $p_n$ (ruling out that the walk ``gets stuck'') it is transient.

Let $Z_n=w_n$ with probability $p_n$ and $Z_n=0$ otherwise. Here $\{w_n\}_{n\ge 1}$ is a non-decreasing numerical sequence such that $w_n\in \mathbb N\setminus \{0\}$.

We will denote  $\mathfrak P_n:=\sum_{k=1}^n p_k$,  
and for a sequence $\{a_n\}_{n\geq 1}$, we will write $\Delta a_i:=a_i-a_{i-1}$.
The graph distance between the root $o$ and vertex $v$ will be denoted by $d(o,v)$.
Finally, denote 
$\tau_n := \inf \Lbrace n > \tau_{n-1} \, : \, Z_n >0 \Rbrace\;$, with $\tau_0 \equiv 0$, and  $r_{i,j}:=\Pd_{T_0,o; \mathcal{L}}(\tau_i>j),i,j\in \mathbb N$;  note that $r_{i,j}$ is a function of the sequence  $\{p_n\}_{n\geq 1}$ only. 
\begin{remark}[Poisson-Binomial]
The $r_{i,j}$ can be computed for a fixed sequence of $p_n$'s using the identities of the Poisson-Binomial distribution, since the number of growths up to time $m$ follows a Poisson-Binomial distribution with parameters $m,p_1,...,p_m$ and mean $\mathfrak P_m$, for $m\ge 1$.
If this number is $K^{m,p_1,...,p_{m}}$ then 
$$r_{i,j}=P(K^{j,p_1,...,p_{j}}\le i-1)\;.$$
Since we only need an upper bound in the next result, we can simply use Chebyshev.  Indeed, if $Y_i:= \mathbb{1}\{Z_i >0\}$, i.e.,  the indicator of the $i$-th growth then $r_{i,j}= P(Y_1+...+Y_j\le i-1)$, and by the Chebyshev inequality,
\begin{equation}\label{Ch.bound}
  r_{i,j}\le  \frac{\sum_{k=1}^j p_k(1-p_{k})}{\left(\mathfrak P_j-i+1\right)^2}\le 
\frac{\mathfrak P_j}{\left(\mathfrak P_j-i+1\right)^2}\;,
\end{equation}
 provided $\mathfrak P_j>i-1.\hfill\diamond$
\end{remark}
\begin{theorem}[Transience]\label{thm:tr}
Assume that 
$\sum\limits_{n\ge 1}p_n=\infty$ to rule out obvious recurrence, and that there exists a numerical sequence $0<a_1<a_2,\dots$ such that
\begin{enumerate}
    \item $$\sum_{i=1}^{\infty}r_{i,a_{i}}<\infty;$$
\item $$\sum_{i=1}^{\infty}\frac{\Delta a_i}{w_{i-1}+1}<\infty.$$
\noindent Then the $\mathcal{L}$-\name is not recurrent.

If furthermore we assume that
\item \begin{equation}\label{mild.condition}
\sum_{n\ge 1} \min\{p_n,p_{n+1}\}=\infty.
\end{equation}
\end{enumerate}
then the $\mathcal{L}$-\name is transient: $\Pd_{T_0,o; \mathcal{L}}\left(\lim\limits_{n\to \infty} d(o,X_n)=\infty\right)=1$.
\end{theorem}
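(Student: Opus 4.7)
The plan is to show under (1)--(2) that the sequence of growth positions $u_i:=X_{\tau_i-1}$ cannot retreat toward the root, which gives non-recurrence, and then to use (3) to promote this to almost-sure divergence of $d(o,X_n)$. The central mechanism is a trapping argument: after the $i$-th growth the vertex $u_i$ acquires $w_{\tau_i}\ge w_i$ new leaves (its ``$i$-th star''), and the walker is very unlikely to leave this star through an old edge before the next growth occurs.

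By Borel--Cantelli applied to (1), $\tau_i\le a_i$ for all sufficiently large $i$, almost surely. Let $E_i$ be the event that the walker exits the $i$-th star through an old edge during $[\tau_i,\tau_{i+1})$. On $E_i^c$, the walker bounces only between $u_i$ and its new leaves, so $u_{i+1}\in\{u_i\}\cup\{\text{new leaves of the }i\text{-th growth}\}$ and $d(o,u_{i+1})\ge d(o,u_i)$. Inductively, on $\bigcap_{j<i}E_j^c$ the old degree of $u_i$ is at most $2$ (either the center of the previous star or one of its new leaves), so each visit of the walker to $u_i$ escapes with probability at most $2/(w_{\tau_i}+1)$. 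A union bound over the at most $\tau_{i+1}-\tau_i$ visits, combined with the bound $\tau_i\le a_i$ from (1), yields
\[
\Pd_{T_0,o;\mathcal{L}}(E_i)\;\lesssim\;\frac{\Delta a_{i+1}}{w_i+1}\;+\;r_{i+1,a_{i+1}},
\]
which is summable by (1) and (2). A second Borel--Cantelli application gives $E_i^c$ for all large $i$; since at each growth the walker is at a new leaf of the $i$-th star at time $\tau_{i+1}-1$ with probability bounded below (by a standard stationarity computation on the star, this probability is close to $1/2$), the increments $d(o,u_{i+1})-d(o,u_i)=1$ occur along a subsequence of positive density, forcing $d(o,u_i)\to\infty$ almost surely and hence non-recurrence.

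For transience under (3), independence of the $Z_n$'s and Borel--Cantelli applied to $\sum\min\{p_n,p_{n+1}\}=\infty$ produce infinitely many pairs of consecutive growth times $(\tau_i,\tau_i+1)=(n,n+1)$. On the no-escape event, $X_n$ must lie in the $i$-th star, and since it is a neighbor of $u_i$ it must be one of the new leaves; hence $u_{i+1}=X_n$ satisfies $d(o,u_{i+1})=d(o,u_i)+1$. Thus $d(o,u_i)$ strictly increases at every double growth, so $d(o,u_i)\to\infty$ a.s. Passing from the subsequence $\{u_i\}$ to the full trajectory: on the no-escape event the walker remains within graph distance $1$ of the most recent star's center between growths, so $d(o,X_n)\to\infty$ almost surely.

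The main technical obstacle is the replacement of $\tau_{i+1}-\tau_i$ by $\Delta a_{i+1}$ in the escape bound: condition (1) provides only an upper bound on $\tau_i$, not a matching lower bound, so a naive interval decomposition fails. One workaround is the decomposition $\tau_{i+1}-\tau_i\le\Delta a_{i+1}+(a_i-\tau_i)_+$, absorbing the slack into the summable tail $\sum r_{i,a_i}$ from (1); a cleaner alternative is to sum escape probabilities globally, noting that (2) implicitly forces $\sum_n 1/w_n<\infty$ (by comparing $\sum\Delta a_i/(w_{i-1}+1)$ with a Riemann sum for $\int 1/w$), so the total expected number of escape attempts over the entire trajectory is finite and Borel--Cantelli closes the argument without the interval-length issue.
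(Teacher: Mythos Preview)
Your overall trapping picture is sound, but several steps do not go through as written, and the paper's argument differs in precisely those places.

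\textbf{The ``old degree at most $2$'' claim is false.} On $\bigcap_{j<i}E_j^c$ you allow $u_i=u_{i-1}$. In that case the degree of $u_i$ \emph{before} the $i$-th growth already includes the $w_{\tau_{i-1}}$ leaves from the $(i-1)$-th growth, so the old degree is not at most $2$ and your per-visit escape bound $2/(w_{\tau_i}+1)$ collapses. The paper avoids this entirely: instead of ``exit the star through an old edge,'' the relevant bad step is ``take the unique edge toward $o$,'' which has probability exactly $1/\deg(u_i)\le 1/(w_i+1)$ regardless of how many old leaves $u_i$ carries.

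\textbf{The interval-length replacement is not repaired by your workarounds.} The first workaround, $\tau_{i+1}-\tau_i\le\Delta a_{i+1}+(a_i-\tau_i)_+$, leaves you with a random slack that is not controlled by $\sum r_{i,a_i}$. The second workaround is misguided: even granting $\sum_i 1/w_i<\infty$, the global escape count is $\sum_i(\tau_{i+1}-\tau_i)/(w_i+1)$, not $\sum_i 1/(w_i+1)$, so you are back to the same problem. The paper's fix is to partition \emph{absolute time} (not growth index) by the deterministic $a_i$'s: on $\bigcap_{i\ge k}\{\tau_i\le a_i\}$, every $n\in[a_{i-1},a_i)$ has growth count at least $i-1$, so each of the $\Delta a_i$ steps in that window succeeds with probability at least $w_{i-1}/(w_{i-1}+1)$; this gives the product bound $\prod_{i\ge k}(w_{i-1}/(w_{i-1}+1))^{\Delta a_i}$ directly, and condition~(2) makes it positive.

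\textbf{Your use of condition~(3) is incorrect.} The hypothesis $\sum_n\min\{p_n,p_{n+1}\}=\infty$ does \emph{not} force infinitely many consecutive growth times: for $p_n=1/n$ one has $\sum\min\{p_n,p_{n+1}\}=\infty$ but $\sum p_np_{n+1}<\infty$, so by Borel--Cantelli consecutive growths occur only finitely often almost surely. What condition~(3) actually buys (and what the paper uses) is that growths occur infinitely often at times of \emph{both} parities; since on the no-backstep event the walker alternates between $u_i$ and its leaves, this rules out $u_i=u_{i+1}=u_{i+2}=\cdots$ and forces the sequence $d(o,v_k)$ to be strictly increasing along a subsequence.

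Finally, for the non-recurrence conclusion under (1)--(2) alone you do not need $d(o,u_i)\to\infty$; it suffices that with positive probability the walker never returns to the root, which already follows once the ``never step toward $o$'' event has positive probability. Your positive-density argument is unnecessary and, as written, not rigorous.
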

\begin{corollary}
For every non-summable sequence $\{p_n\}_{n\geq 1}$ there exists some sequence $\{w_n\}_{n\geq 1}$, making the \name{}  non-recurrent, and under Condition 3 of Theorem \ref{thm:tr} even transient.
\end{corollary}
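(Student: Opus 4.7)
The plan is to reduce the corollary to Theorem~\ref{thm:tr} by producing, for each non-summable $\{p_n\}$, a strictly increasing integer sequence $\{a_i\}$ together with a non-decreasing positive-integer sequence $\{w_n\}$ for which both summability conditions (1) and (2) of the theorem hold. Non-recurrence then follows at once, and transience follows the moment one additionally assumes condition (3), which is a hypothesis on $\{p_n\}$ alone and therefore unaffected by our choice of $\{w_n\}$.

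For condition (1), I would exploit the non-summability of $\{p_n\}$: since $\mathfrak P_n \to \infty$, the definition
$$a_i := \min\bigl\{ n \geq 1 : \mathfrak{P}_n \geq 2i^2 \bigr\}$$
yields a finite, strictly increasing integer sequence. The Chebyshev bound~\eqref{Ch.bound} is applicable because $\mathfrak P_{a_i} \geq 2i^2 > i-1$; moreover $\mathfrak P_{a_i} \geq 2i$ gives $\mathfrak P_{a_i} - i + 1 \geq \mathfrak P_{a_i}/2$, so
$$r_{i, a_i} \;\leq\; \frac{\mathfrak{P}_{a_i}}{(\mathfrak P_{a_i} - i + 1)^2} \;\leq\; \frac{4}{\mathfrak P_{a_i}} \;\leq\; \frac{2}{i^2},$$
making $\sum_i r_{i,a_i}$ convergent.

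For condition (2), with $\{a_i\}$ now fixed, I would set
$$w_n := \max\Bigl(1,\; \max_{1 \leq k \leq n}(k+1)^2\,\Delta a_{k+1}\Bigr), \qquad n \geq 1,$$
which is non-decreasing with values in $\mathbb{N}\setminus\{0\}$ (since each $\Delta a_{k+1}\geq 1$). Then $w_{i-1} \geq i^2\,\Delta a_i$ for every $i \geq 2$, so
$$\sum_{i\geq 2}\frac{\Delta a_i}{w_{i-1}+1} \;\leq\; \sum_{i\geq 2}\frac{1}{i^2} \;<\;\infty,$$
and the $i=1$ contribution $\Delta a_1/(w_0+1)$ is a single finite term.

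Both hypotheses of Theorem~\ref{thm:tr} are then met, yielding non-recurrence; if furthermore $\sum_n \min(p_n,p_{n+1})=\infty$, the transience conclusion of the same theorem applies. There is no serious obstacle here beyond the routine verification that $\{w_n\}$ is integer-valued, positive and non-decreasing, and that the Chebyshev bound is valid in the range considered; all the probabilistic content is carried by Theorem~\ref{thm:tr}. The polynomial rate $2i^2$ is not delicate either, since any power strictly greater than $1$ would make the Chebyshev sum converge and would then dictate a matching lower bound on the growth of $w_n$.
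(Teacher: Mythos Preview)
Your argument is correct. The paper does not spell out a proof of this corollary at all, treating it as an immediate consequence of Theorem~\ref{thm:tr}; your construction of $a_i$ via the level sets of $\mathfrak P_n$ and the subsequent tailoring of $w_n$ is precisely the kind of routine verification the authors are implicitly leaving to the reader, and it goes through as written (the strict monotonicity of $\{a_i\}$ uses $p_n\le 1$, which you could mention explicitly, and the $w_0$ term is a harmless convention).
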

\begin{example}[Transience]
Let $p_i:=\frac{1}{i+1}$, hence $\mathfrak P_n=\mathcal{O}(\log n)$. Then for any $\{w_i\}$ growing so fast that with some $\epsilon>0$
$$\sum_{i=1}^{\infty}\frac{2^{i^{1+\epsilon}}}{w_{i-1}+1}<\infty\;,$$
holds,
the $\mathcal{L}$-\name is transient. To verify this, pick $a_i:=2^{i^{1+\epsilon}},\ \epsilon>0.$ Then
by \eqref{Ch.bound}, $$r_{i,a_i}\le \frac{\mathfrak P_{a_{i}}}{(\mathfrak P_{a_{i}}-i+1)^2}\;,$$
which is $\mathcal{O}(i^{-1-\epsilon}),$ thus summable.$\hfill\diamond$
\end{example}
The next example shows that the condition \eqref{mild.condition} is indeed essential for transience.
\begin{example}[The walk is not recurrent and the walker is ``stuck'']
Consider the sequence of $p_n$'s $$1/2,1/2,1/3,1/4,1/4,1/8,1/5,1/16,...$$
which is obtained by ``combing'' the harmonic series $1/2,1/3,1/4,...$ and the convergent series 
$1/2,1/4,1/8,...$ together; condition \eqref{mild.condition} is not satisfied in this case.
A close look at the proof of Theorem \ref{thm:tr} reveals that for this choice of $p_n$ and sufficiently large $w_n$ (so that the first two conditions of Theorem \ref{thm:tr} are met) with positive probability $d(o,X_n)$ can be stuck between two positive integers forever.$\hfill\diamond$
\end{example}
\begin{proof}[Proof of Theorem \ref{thm:tr}]
We prove the result in three steps.

\medskip
\underline{STEP 1:} We now assume that \eqref{mild.condition} holds. Recall that $\tau_i$ is the $i$-th time of growth, $i\ge 1,$ 
and for $k\ge 1$, let
$$v_k:=\text{\ the ``father vertex'' launching $w_k$ new edges at}\ \tau_k;$$
$$A_k:=\{\exists i\ge 1: d(o, X_n)> d(o, v_k) \; \forall n\geq \tau_k +i\}\;.$$
Note the following. Writing simply $d(v)$ instead of $d(o,v)$, on the event $\limsup_k A_k,$ the walker
$X$ eventually leaves all closed (!) balls of radius $d(v_k)$ around $o$. In particular then, $d(v_{k+1})>d(v_k)$. Hence, in fact $X$ eventually leaves arbitrarily large closed balls, and this implies transience.
So, if we show that 
\begin{equation}\label{limsup=1}
    \Pd_{T_0,o; \mathcal{L}}\left(\limsup_{k\to\infty} A_k\right)=1\;,
\end{equation} then transience follows.

We start by showing that for any $\epsilon>0$ there is a deterministic integer $k_0=k_0(\epsilon)\ge 1$  such that if $k\ge k_0$ then  
\begin{align}\label{close.to.1}
\pi_k:=&P(A_k)\ge 1-\epsilon\;.
\end{align}
To achieve this,  first note that
\begin{equation}\label{union.bound}
\Pd_{T_0,o; \mathcal{L}}(\cap_{i=k}^{\infty}\{\tau_i\le a_i\})= 1-\Pd_{T_0,o; \mathcal{L}}(\cup_{i=k}^{\infty}\{\tau_i>a_i\})\ge 1-\sum_{i=k}^{\infty}r_{i,a_{i}}\;.
\end{equation}

\underline{Heuristic explanation for estimating $\pi_k$ from below:}
First note that clearly, $\tau_k\ge k$, thus $w_{\tau_k}\ge w_k$, given our monotonicity assumption on the $w_i$'s.
Consider the following strategy.
For each $k\ge 0$, on the time interval $[\tau_k,\tau_{k+1}-1]$ the walker chooses to increase her distance from $o$ every time she is at a vertex of degree $1+w_k$ or larger; each time this happens with probability  at least $w_k/(w_k+1)$.

We now claim that this strategy guarantees that $A_k$ occurs almost surely, that is, that the walker, while implementing this strategy, eventually leaves the ball of radius $d(v_k)$, whatever $k\ge 1$ is.

Indeed, for any $k\ge 1$, the event $v_k=v_{k+1}=v_{k+2}=...$ has probability zero. To see this recall \eqref{mild.condition} and note that at least one time out of any two consecutive times, the walker is at a vertex different from $v_k$. 
This means that the walker eventually stays at a distance from the origin which is not just at least $d(v_k)$ but also at least $d(v_{k+i})>d(v_k)$, with some $i\ge 1$. 

Since the above strategy guarantees that $A_k$ occurs a.s., by \eqref{union.bound} one has that
\begin{equation}\label{lower.bd.pik}
   \pi_k\ge \left(1-\sum_{i=k}^{\infty} r_{i,a_i}\right)\cdot\prod_{i=k}^{\infty}\left( \frac{w_{i-1}}{w_{i-1}+1}\right)^{\Delta a_i}, \quad k\ge 1\;.
\end{equation}

Using that for some $c>e$, 
$$\left( \frac{w_{i-1}}{w_{i-1}+1}\right)^{\Delta a_i}> c^{- \frac{\Delta a_{i}}{w_{i-1}+1}},\qquad i\ge 2,$$
it follows that
\begin{equation}\label{sum.product}\prod_{i=k}^{\infty}\left( \frac{w_{i-1}}{w_{i-1}+1}\right)^{\Delta a_i}\ge
\prod_{i=k}^{\infty}c^{- \frac{\Delta a_{i}}{w_{i-1}+1}}=c^{-\sum_{i=k}^{\infty} \frac{\Delta a_{i}}{w_{i-1}+1}}\;.
\end{equation}
Exploiting \eqref{lower.bd.pik} and \eqref{sum.product}  the bound $\pi_k>1-\epsilon$ follows if, with some appropriately small $\delta(\epsilon)>0$,
\begin{equation*}
\sum_{i=k}^{\infty} r_{i,a_i}<\delta(\epsilon)\text{\qquad and\qquad}
\sum_{i=k}^{\infty}\frac{\Delta a_i}{w_{i-1}+1}<\delta(\epsilon)\;,
\end{equation*}
which, in turn, are guaranteed for large enough $k$'s by our summability assumptions.

\medskip
\underline{STEP 2:}
So far, we have shown that, assuming  \eqref{mild.condition},  for some (deterministic) integer $k_0(\epsilon)\ge 1$,
$$\Pd_{T_0,o; \mathcal{L}}(A_k)\ge 1-\epsilon,\qquad \forall k\ge k_0(\epsilon).$$
By the reverse Fatou inequality then, $$\Pd_{T_0,o; \mathcal{L}}\left(\limsup_{k\to\infty} A_k\right)=\Pd_{T_0,o; \mathcal{L}}\left(\limsup_{i\to\infty} A_{k_{0}+i}\right)\ge 1-\epsilon.$$
Since $\epsilon>0$ was arbitrary, \eqref{limsup=1} follows, and hence we are done with showing transience under conditions 1-3.

\medskip
\underline{STEP 3:} To complete the proof of the theorem, a careful look at the first two steps reveals that if we drop \eqref{mild.condition}, then although the argument for the walker going out to infinity breaks down, it still follows that she will be bounded away from the root a.s., hence the process is not recurrent.
\end{proof}

%


\medskip 
\paragraph{\bf Acknowledgments.}{J. E. is indebted to Y. Peres for first bringing the model to his attention and for further helpful conversations.}


\bibliographystyle{amsplain}
\bibliography{ref2023.bib}












\end{document}